\newtheorem*{theorem-intro}{Theorem}
\newtheorem*{question-intro}{Question}
\newtheorem*{corollary-intro}{Corollary}
\newtheorem{theorem}{Theorem}[section]
\newtheorem{lemma}[theorem]{Lemma}
\newtheorem{corollary}[theorem]{Corollary}
\theoremstyle{definition}
\newtheorem{definition}[theorem]{Definition}
\newtheorem{remark}[theorem]{Remark}
\newcommand{\C}{\mathcal{C}}
\newcommand{\m}{\mathcal{M}}
\newcommand{\g}{\mathfrak{g}}
\newcommand{\Nn}{\mathcal{N}}
\newcommand\trans{\circledast}
\newcommand{\N}{{\mathbb N}}
\newcommand{\Z}{\mathbb{Z}}
\newcommand{\D}{\mathcal{D}}
\newcommand\id{\operatorname{id}}
\begin{document}
\title[A category viewpoint for translation functors for finite $W$-algebras]{A monoidal category viewpoint for translation functors for finite $W$-algebras}

\author[E. Masut]{Elisabetta Masut}

\address{
Dipartimento di Matematica Tullio Levi-Civita,
Universit\`a degli Studi di Padova,
via Trieste 63, 35121 Padova, Italia \bigskip}

\email{elisabetta.masut@gmail.com}
\begin{abstract}
We re-interpret Goodwin’s translation functors for a finite $W$-algebra $H_\ell$ as an action of a  monoidal subcategory of $U(\mathfrak{g})$-mod on the category of finitely generated $H_\ell$-modules. This action is obtained by transporting the tensor product of $U(\mathfrak{g})$-modules through Skryabin's equivalence. We apply this interpretation to show that the Skryabin equivalence by stages introduced by Genra and Juillard is an equivalence of $U(\g)$-module categories.
\end{abstract}
\maketitle
\section{Introduction}
A finite $W$-algebra $H_\ell$ is an algebra constructed from a reductive Lie algebra $\g$ and a nilpotent element $e \in \g$.\\ The first definition of such an algebra dates back to 1978, when Kostant constructed in \cite{K} the algebra $H_\ell$ starting from a regular nilpotent element $e_{\text{reg}}$ and showed that $H_\ell \simeq Z(U(\g))$. \\
In \cite{P}, Premet gave a general definition of a finite $W$-algebra, i.e. he generalized Konstant's construction to the case of a general nilpotent element. In particular, when $e=0$, then $H_\ell \simeq U(\g)$, and when $e$ is regular, then $H_\ell \simeq Z(U(\g))$. The work in \cite{P} was motivated by the study of representations of semisimple Lie algebras in positive characteristic.\\
Roughly speaking, a finite $W$-algebra is a subquotient of $U(\g)$ which lies between $Z(U(\g))$ and $U(\g)$, although in general it is not a subalgebra of $U(\g)$. Further contributions to the comprehension of this construction are due to Gan, Ginzburg, Brundan and Goodwin (\cite{BG}, \cite{BGK}, \cite{GG}).\\ \\
\par
Finite $W$-algebras captured the attention of mathematicians, but also of physicists. The latter were interested in their connection with affine $W$-algebras, which are vertex algebras modeling the so called $W$-Symmetry from conformal field theory (see for instance \cite{BE}).\\
Mathematicians are interested in the representation theory of finite $W$-algebras, because important information about $U(\g)$-modules and primitive ideals of $U(\g)$ are encoded in the representation theory of $H_\ell$ (\cite{L}). An important connection was illustrated by Skryabin in the appendix of \cite{P}, where he established an equivalence between finitely-generated $H_\ell$-modules and a specific subcategory of $U(\g)$-modules. By means of this equivalence, Premet in \cite{P2} proved that every finite $W$-algebra possesses finite dimensional irreducible representations.
\par
In representation theory of Lie algebras, a crucial role is played by the Bernstein-Gelfand-Gelfand category $\mathcal{O}$ (\cite[Chapter 1]{H}). This category contains important modules such as  finite-dimensional modules, highest-weight modules, Verma modules.\\ The category $\mathcal{O}$ is the direct sum of the subcategories $\mathcal{O}_\chi$, called blocks, where $\chi$ runs through the set of central characters of $U(\g)$. These blocks are related to each other by means of translation functors; these functors allow us to deduce equivalences between blocks and to understand how representations behave when they are translated from a block to another one in the BGG category $\mathcal{O}$ (\cite[Chapter 7]{H}).  \\\\
In \cite{G}, Goodwin introduced an analogous functor for $H_\ell$-modules. He defined the translation of a finitely generated $H_\ell$-module by using Skryabin's equivalence. Additionally, he studied the relation between the translation functors and the  category $\mathcal{O}(e)$, which is the analogue of the BGG category in $W$-algebras setting (for the definition see \cite[Subsection 4.4]{BGK}). \\
In this paper, after spelling out the transport of structure procedure for general actions of monoidal categories, we interpret this translation functor as an action of a specific subcategory of $U(\g)$ representations on the category of finitely generated $H_\ell$-modules via transport of structure through Skryabin equivalence.\\
Next, we apply this interpretation to the context of reduction by stages, that we now explain.\\
By Poincaré-Birkhoff-Witt theorem, the universal enveloping algebra $U(\g)$ is a quantization of the symmetric algebra $S(\g)$. Also, in \cite{P}, Premet showed that $H_\ell$ is a quantization of the Slodowy slice $\mathscr{S}_e$ associated to a nilpotent element $e \in \g$. For constructing $\mathscr{S}_e$ one needs a nilpotent element $e \in \g$, which, by Jacobson-Morozov Theorem, can be embedded in an $\mathfrak{sl}_2$-triple $\{e, h, f \}$ i.e. $h, f \in \g$ are such that $[h,e]=2e$, $[h,f]=-2f$ and $[e,f]=h$. Then the Slodowy slice  is defined as $\mathscr{S}_e=e+\g^f$, where $\g^f$ stands for the centralizer of $f$ in $\g$. Since $\g^*$ is a Poisson variety, the Slodowy slice inherits its Poisson structure through Hamiltonian reduction. Hence, the finite $W$-algebra can be seen as a quantum Hamiltonian reduction of $U(\g)$.\\ \\
Since $U(\g) \simeq H_\ell$ when $e=0$, one might ask whether a finite $W$-algebra $H_{\ell_{(2)}}$ can be expressed as a quantum Hamiltonian reduction of another finite $W$-algebra $H_{\ell_{(1)}}$ in such a way that the following diagram commutes.\\
\begin{center}
\begin{tikzcd}[column sep=6em,row sep=5em]
U(\g) \arrow[rr,dashed,"Q.H.R."] \arrow[dr,dashed, "Q.H.R."] && H_{\ell_{(2)}} \\
 & H_{\ell_{(1)}} \arrow[ur,dashed," 'Q.H.R' "] & 
\end{tikzcd}
\end{center}
The theory of Hamiltonian reduction by stages is a well-developed branch of symplectic geometry. The problem is to find a quantum version of Hamiltonian reduction by stages.\\
Firstly Morgan in \cite{M}, and then Genra and Julliard in \cite{GJ} worked on this topic. In particular, the latter were interested in this reduction by stages for affine $W$-algebras and for finite $W$-algebras; for the first one they made some conjectures, while for the finite ones they found some conditions for which a quantum version of Hamiltonian reduction by stages can be performed. \\
As a consequence of this construction, they obtained a variant of Skryabin equivalence. \\
In this paper we show that the above equivalence is compatible with the category action by translation functors.  
\subsection{Organization}
We briefly outline how this paper is organized.\\ \\
In the second Section, we recall some notions about category theory. In particular, we  recall the definition of a $\C$-module category and of a $\C$-module functor where $\C$ is a monoidal category. Then, we spell out how we can construct new $\C$-module categories by means of an equivalence and transport of structure. \\
Moreover, we recall the definition of a finite $W$-algebra and of the classical version of Skryabin equivalence. Also, we briefly recall from \cite{GJ} how reduction by stages for finite $W$-algebras can be performed and then we will present the variant of Skryabin equivalence.\\ \\
The goal of Section 3 is to spell out how the translation functors introduced by Goodwin are an instance of transport of structure of a natural categorical action. To this aim, we show that the category of Whittaker modules is a $(\C_e,\C_e)$-bimodule category, where $\C_e$ is a subcategory of the category of $U(\g)$-modules, depending on $e$. Then, by means of Skryabin equivalence, we endow the category of $H_\ell$-modules with a $(\C_e,\C_e)$-bimodule structure.\\\\
In Section 4, we show that the Skryabin equivalence by stages is invariant under the action of $\C_e$.\\ \\
Finally, in the Appendix, we recollect some technical details needed in Section 1. 
\subsection{Acknowledgements}
The author would like to thank Giovanna Carnovale for her numerous comments and suggestions, which greatly contributed to improve the quality of this paper.
\section{Preliminaries}
In this section, we fix notations and we give preliminary definitions and results on $\C$-module categories and on $W$-algebras.
\subsection{$\C$-modules categories} First of all, we will refer to a monoidal category as a quadruple $(\C, \otimes_\C , a, 1)$, where $\C$ is a category, $\otimes_\C$ is the tensor product bifunctor, $a$ is the associativity constraint and $1$ is the unit object. We do not include the left and the right unit constraints in the notation since they won't play a role in this paper.\\ Also, given a monoidal category $(\C, \otimes_\C , a, 1)$, the opposite monoidal category to $\C$ is denoted by  $(\C^{\text{op}},\otimes_\C^{\text{op}}, a^{\text{op}},1)$, where $\C^\text{op}=\C$, $X \otimes_\C^{\text{op}} Y:=Y \otimes_\C X$ and $a_{X,Y,Z}^{\text{op}}:=a_{Z,Y,X}^{-1}$.\\
The definition of a left $\C$-module category is the categorification of the left action of a monoid on a set. 
\begin{definition}\cite[Definition 7.1.1]{E}
Let $(\C, \otimes_\C, a, 1)$ be a monoidal category.
A left module category over $\C$ is a category $\m$ equipped with an action bifunctor $\otimes_\m \colon \C \times \m \to \m$ and a natural isomorphism 
\begin{equation}
\label{ass}
m_{X,Y,M} \colon (X \otimes_\C Y) \otimes_\m M  \xrightarrow[]{\sim} X \otimes_\m (Y \otimes_\m M), \qquad X,Y \in \C, \, M \in \m,
\end{equation}
called module associativity constraint such that the functor $1 \otimes_\m - \colon \m \to \m $, given by $1 \otimes_\m M \mapsto  M $ is an autoequivalence, and the pentagon diagram:
\begin{center}
\small
$$
\label{penta}
\begin{tikzpicture}[thick,scale=0.6, every node/.style={scale=0.9}]
  \matrix (m) [matrix of math nodes,row sep=2em,column sep=2em,minimum width=2em]
  {
   {\,}  &((X \otimes_\C Y)\otimes_\C Z) \otimes_\m M &{\,} \\
   (X \otimes_\C (Y \otimes_\C Z)) \otimes_\m M&{\,}&(X \otimes_\C Y) \otimes_\m (Z \otimes_\m M)\\
X \otimes_\m ((Y \otimes_\C Z)\otimes_\m M) & {\,} &X \otimes _\m(Y \otimes_\m (Z \otimes_\m M)) \\  };
  \path[-stealth]
    (m-1-2) edge node [above] {$\scriptstyle{a_{X,Y,Z}\otimes_\m \text{id}_M \qquad \qquad}$} (m-2-1)
    (m-1-2) edge node [above] {$\, \, \hspace{0.5cm}\scriptstyle{m_{X \otimes_\C Y,Z,M}}$} (m-2-3)
    (m-2-1) edge node [left] {$\scriptstyle{m_{X,Y\otimes_\C Z, M}}$} (m-3-1)
(m-3-1) edge node [above] {$\scriptstyle{\text{id}_X \otimes_\m m_{Y,Z,M}}$} (m-3-3)
(m-2-3) edge node [right] {$\scriptstyle{m_{X,Y,Z \otimes_ \m M}}$} (m-3-3);
\end{tikzpicture}
$$
\end{center}
is commutative for any $X,Y,Z \in \C$ and $M \in \m$.
\end{definition}

In a similar way one can define a right $\C$-module category $(\m, \otimes^\m, m^r)$. Namely, a right $\C$ module category is the same thing as a left $\C^{\text{op}}$-module category. \\ 
We now introduce the notion of a bimodule category over a pair of monoidal categories.
\begin{definition}\cite[Definition 7.1.7]{E}
Let $(\C, \otimes_\C, a, 1)$ and 	$(\mathcal{D}, \otimes_{\mathcal{D}}, \tilde{a}, \tilde{1})$ be monoidal categories. A $(\C,\mathcal{D})$-bimodule category is a category $\m$ which is a left  $\C$-module category $(\m,\otimes_\m, m)$ and a right $\mathcal{D}$-module category $(\m, \otimes^\m, m^r)$ with module associativity constraints for $X, Y \in \C$, $M \in \m$ and $W, Z \in \mathcal{D}$
$$m_{X,Y,M} \colon (X \otimes_\C Y) \otimes_\m M \overset{\sim}{\to} X \otimes_\m (Y \otimes_\m M)$$ and $$m^r_{M,W,Z}\colon M \otimes^\m (W \otimes_\mathcal{D} Z) \overset{\sim}{\to} (M \otimes^\m W) \otimes^\m Z$$ respectively, compatible by a collection of natural isomorphisms for $X \in \C$, $M \in \m$ and $Z \in \mathcal{D}$ $$b_{X,M,Z}\colon (X \otimes_\m M) \otimes^\m Z \overset{\sim}{\to} X \otimes_\m (M \otimes^\m Z)$$ called middle associativity constraints such that the diagrams 
\small
\begin{center}
\begin{equation}
\label{bim1}
\begin{tikzpicture}[thick,scale=0.6, every node/.style={scale=0.9}]
  \matrix (m) [matrix of math nodes,row sep=2em,column sep=1.5em,minimum width=2em]
  {
   {\,}  &((X \otimes_\C Y)\otimes_\m M) \otimes^\m Z &{\,} \\
   (X \otimes_\m (Y \otimes_\m M)) \otimes^\m Z&{\,}&(X \otimes_\C Y) \otimes_\m(M \otimes^\m Z)\\
X \otimes_\m ((Y \otimes_\m M)\otimes^\m Z) & {\,} &X \otimes_\m (Y \otimes_\m (M \otimes^\m Z)) \\  };
  \path[-stealth]
    (m-1-2) edge node [above] {$\scriptstyle{m_{X,Y,M}\otimes^\m \text{id}_Z \qquad \qquad}$} (m-2-1)
    (m-1-2) edge node [above] {$\scriptstyle{\qquad b_{X \otimes_\C Y,M,Z}}$} (m-2-3)
    (m-2-1) edge node [left] {$\scriptstyle{b_{X,Y\otimes_\m M, Z}}$} (m-3-1)
(m-3-1) edge node [above] {$\scriptstyle{\text{id}_X \otimes_\m b_{Y,M,Z}}$} (m-3-3)
(m-2-3) edge node [right] {$\scriptstyle{m_{X,Y,M \otimes^\m Z}}$} (m-3-3);
\end{tikzpicture}
\end{equation}
\end{center}
and
\begin{center}
\begin{equation}
\label{bim2}
\begin{tikzpicture}[thick,scale=0.6, every node/.style={scale=0.78}]
  \matrix (m) [matrix of math nodes,row sep=2em,column sep=1.5em,minimum width=2em]
  {
   {\,}  &X \otimes_\m (M\otimes^\m ( W \otimes_\mathcal{D} Z)) &{\,} \\
   X \otimes_\m((M \otimes^\m W) \otimes^\m Z)&{\,}&(X \otimes_\m M) \otimes^\m (W \otimes_\mathcal{D} Z)\\
(X \otimes_\m (M \otimes^\m W))\otimes^\m Z & {\,} &((X \otimes_\m M) \otimes^\m W) \otimes^\m Z \\  };
  \path[-stealth]
    (m-1-2) edge node [above] {$\scriptstyle{ \text{id}_X \otimes_\m m^r_{M,W,Z} \qquad \qquad}$} (m-2-1)
    (m-2-3) edge node [above] {$\scriptstyle{\qquad b_{X, M, W \otimes_\mathcal{D}Z}}$} (m-1-2)
    (m-3-1) edge node [left] {$\scriptstyle{b_{X,M\otimes^\m W, Z}}$} (m-2-1)
(m-3-3) edge node [above] {$\scriptstyle{ b_{X,M,W} \otimes^\m \text{id}_Z}$} (m-3-1)
(m-2-3) edge node [right] {$\scriptstyle{m^r_{X \otimes_\m M,W,Z}}$} (m-3-3);
\end{tikzpicture}
\end{equation}
\end{center}
commute for all $X,Y \in \C, Z, W \in \mathcal{D}$ and $M \in \m$.
\end{definition}

An important notion we need to categorify is the notion of equivariant morphisms. This leads to the following definition. 
\begin{definition}\cite[Definition 7.2.1]{E}
\label{modulefunctor}
Let $(\m, \otimes_\m, m)$ and $(\mathcal{N}, \otimes_{\mathcal{N}},n)$ be two module categories over a monoidal category $(\C, \otimes_\C, a, 1)$ with associativity constraints $m$ and $n$, respectively. A $\C$-module functor from $\m$ to $\mathcal{N}$ consists of a functor $F \colon \m \to \mathcal{N}$ and a natural isomorphism 
\begin{equation}
\label{natisofunctor}
s_{X,M} \colon F(X \otimes_\m M) \to X \otimes_\mathcal{N} F(M), \qquad X \in \C,\, M \in \m
\end{equation}
such that the following diagrams 
\small
\begin{center}
\begin{equation}
\label{functor1}
\begin{tikzpicture}
 \matrix (m) [matrix of math nodes,row sep=2em,column sep=2em,minimum width=2em]
 {
  {\,}  & F((X\otimes_\C Y) \otimes_\m M) &{\,} \\
F(X \otimes_\m (Y \otimes_\m M)&{\,}&(X \otimes_\C Y) \otimes_\mathcal{N} F(M)\\
X \otimes_\mathcal{N} F(Y \otimes_\m M) & {\,} &X \otimes_\mathcal{N}(Y \otimes_\mathcal{N} F(M)) \\  };
\path[-stealth]
(m-1-2) edge node [above] {$\scriptstyle{F(m_{X,Y,M}}) \hspace{2em},$} (m-2-1)
(m-1-2) edge node [above] {$\hspace{2em}\scriptstyle{s_{X \otimes_\C Y,M}}$} (m-2-3)
(m-2-1) edge node [left] {$\scriptstyle{s_{X,Y\otimes_\m M}}$} (m-3-1)
(m-3-1) edge node [above] {$\scriptstyle{\text{id}_X \otimes_\mathcal{N} s_{Y,M}}$} (m-3-3)
(m-2-3) edge node [right] {$\scriptstyle{n_{X,Y,F(M)}}$} (m-3-3);
\end{tikzpicture}
\end{equation}
\end{center}
and
\begin{center}
\begin{equation}
\label{functor2}
\begin{tikzpicture}
 \matrix (m) [matrix of math nodes,row sep=2em,column sep=5em,minimum width=2em]
 {
  F(1 \otimes_\m M) &{\,}& 1 \otimes_\mathcal{N} F(M)\\
 {\,}&{\,}&{\,}\\
{\,} & F(M) & {\,} \\ };
 \path[-stealth]
 (m-1-1) edge node [above] {$\scriptstyle{s_{1,M}}$} (m-1-3)
(m-1-1) edge node [above] {$\scriptstyle{\qquad \textcolor{white}{F(L_M)}}$} (m-3-2)
(m-1-3) edge node [above] {$\scriptstyle{\textcolor{white}{\tilde{L}_{F(M)}}\qquad}$} (m-3-2);
\end{tikzpicture}
\end{equation}
\end{center}
commute for all $X,Y \in \C$ and $M \in \m$.
\end{definition}
\begin{definition}
Let $(\m, \otimes_\m, m)$ and $(\mathcal{N}, \otimes_{\mathcal{N}},n)$ be two module categories over a monoidal category $(\C, \otimes_\C, a, 1)$ with associativity constraints $m$ and $n$, respectively.  We say that a $\C$-module functor $(F \colon \m \to \Nn, s)$ is an equivalence of $\C$-module categories if $F$ is an equivalence.
\end{definition}
The above definitions have a specular counterpart for right-module categories.
For the definition of a bimodule functor, we also need a compatibility condition. In particular, \cite[Definition 2.10]{Green} together with \cite[Remark 2.14]{Green} gives us the following:
\begin{definition}
Let  $(\C, \otimes_\C, a, 1)$ and $(\mathcal{D}, \otimes_\mathcal{D}, \tilde{a}, \tilde{1})$ be two monoidal categories and let $(\m, \otimes_\m, \otimes^\m, m, m^r, b)$ and $(\mathcal{N}, \otimes_{\mathcal{N}}, \otimes^{\mathcal{N}},n, n^r, p)$ be two $(\C, \mathcal{D})$-bimodule categories. A $(\C, \mathcal{D})$-bimodule functor from $\m$ to $\Nn$ consists of a triple $(F, s, s^r)$, where $F$ is a functor from $\m$ to $\Nn$, $(F,s)$ is a left $\C$-module functor and $(F,s^r)$ is a right $\C$-module functor, such that the following diagram commutes
\begin{center}
\begin{equation}
\label{functorbimodule}
\begin{tikzpicture}
 \matrix (m) [matrix of math nodes,row sep=4em,column sep=2.5em,minimum width=2em]
 {
  F((X \otimes_\m M) \otimes^\m Y)  &\, & F(X \otimes_\m (M \otimes^\m Y)) \\
 F(X\otimes_\m M) \otimes^\Nn Y&{\,}& X \otimes_\Nn F(M \otimes^\m Y)\\
(X \otimes_\Nn F(M)) \otimes^\Nn Y& {\,} & X \otimes_\Nn (F(M) \otimes^\Nn Y) \\  };
\path[-stealth]
(m-1-1) edge node [above] {$\scriptstyle{F(b_{X,M,Y})}$} (m-1-3)
(m-1-1) edge node [left] {$\scriptstyle{s^r_{X \otimes_\m M, Y}}$} (m-2-1)
(m-2-1) edge node [left] {$\scriptstyle{s_{X,M} \otimes^\Nn \text{id}_Y}$} (m-3-1)
(m-3-1) edge node [above] {$\scriptstyle{p_{X, F(M), Y}}$} (m-3-3)
(m-1-3) edge node [right] {$\scriptstyle{s_{X, M \otimes^\m Y}}$} (m-2-3)
(m-2-3) edge node [right] {$\scriptstyle{\text{id}_X \otimes_\Nn s^r_{M,Y}}$} (m-3-3);
\end{tikzpicture}
\end{equation}
\end{center}
for every $M \in \m$, $X \in \C$ and $Y \in \mathcal{D}$.
\end{definition}
We conclude this subsection with a property about $\C$-module functors.
\begin{lemma}
\label{compositionfunctor}
Let $(\m, \otimes_\m, m), (\Nn, \otimes_\Nn, n)$ and $(\mathcal{P}, \otimes_\mathcal{P}, p)$ be module categories over a monoidal category $(\C, \otimes_\C, a, 1)$. Let $(F \colon \m \to \Nn, s)$ and $(G \colon \Nn \to \mathcal{P},t)$ be two $\C$-module functors. Then $(G \circ F, u:= t_{-,F(-)} \circ G(s))$ is a $\C$-module functor.
\end{lemma}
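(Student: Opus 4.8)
The plan is to verify directly the two coherence conditions of Definition \ref{modulefunctor} for the pair $(G\circ F,u)$, where $u_{X,M}:=t_{X,F(M)}\circ G(s_{X,M})$. First, $u_{X,M}$ really is a natural isomorphism $(G\circ F)(X\otimes_\m M)\xrightarrow{\sim}X\otimes_\mathcal{P}(G\circ F)(M)$: it is the composite of $G(s_{X,M})$, an isomorphism since functors preserve isomorphisms, with the isomorphism $t_{X,F(M)}$, and its naturality in $X\in\C$ and $M\in\m$ is inherited from that of $s$ and $t$ together with functoriality of $G$. So the real content is the commutativity of the pentagon \eqref{functor1} and of the triangle \eqref{functor2} for $(G\circ F,u)$, i.e. after substituting $(G\circ F,u,p,\mathcal{P})$ for $(F,s,n,\Nn)$ there.

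For the pentagon, I would first expand every occurrence of $u$ into $t\circ G(s)$, using functoriality of the partial functor $X\otimes_\mathcal{P}-$ to rewrite the edge $\id_X\otimes_\mathcal{P}u_{Y,M}$ as $(\id_X\otimes_\mathcal{P}t_{Y,F(M)})\circ(\id_X\otimes_\mathcal{P}G(s_{Y,M}))$, and then recognise the resulting diagram in $\mathcal{P}$ as pasted from three cells, each already known to commute: (i) the image under the functor $G$ of the pentagon \eqref{functor1} for $(F,s)$ at the triple $(X,Y,M)$, which commutes because $G$ preserves commutative diagrams; (ii) the pentagon \eqref{functor1} for $(G,t)$ at the triple $(X,Y,F(M))\in\C\times\C\times\Nn$; and (iii) the naturality square of the partial natural transformation $t_{X,-}$ at the morphism $s_{Y,M}\colon F(Y\otimes_\m M)\to Y\otimes_\Nn F(M)$ of $\Nn$, namely $t_{X,Y\otimes_\Nn F(M)}\circ G(\id_X\otimes_\Nn s_{Y,M})=(\id_X\otimes_\mathcal{P}G(s_{Y,M}))\circ t_{X,F(Y\otimes_\m M)}$. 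Gluing (i) to (ii) along the edge $G(n_{X,Y,F(M)})$ and attaching (iii) along the edges $G(\id_X\otimes_\Nn s_{Y,M})$ and $t_{X,Y\otimes_\Nn F(M)}$, one reads off
$$p_{X,Y,(G\circ F)(M)}\circ u_{X\otimes_\C Y,M}=(\id_X\otimes_\mathcal{P}u_{Y,M})\circ u_{X,Y\otimes_\m M}\circ(G\circ F)(m_{X,Y,M}),$$
which is precisely the pentagon \eqref{functor1} for $(G\circ F,u)$.

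The triangle \eqref{functor2} for $(G\circ F,u)$ is even more immediate: after expanding $u_{1,M}=t_{1,F(M)}\circ G(s_{1,M})$, it drops out by composing the triangle \eqref{functor2} for $(G,t)$ at $F(M)$ with the $G$-image of the triangle \eqref{functor2} for $(F,s)$.

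I do not expect a genuine obstacle here: the statement is purely formal and the argument is an entirely standard diagram chase. The only points that need care are bookkeeping ones — selecting the correct instance of the naturality of $t$ (at $s_{Y,M}$, in its $\Nn$-variable, with $X$ held fixed) and keeping the four tensor products $\otimes_\C,\otimes_\m,\otimes_\Nn,\otimes_\mathcal{P}$ with their subscripts straight while pasting the cells (i)--(iii) together. One could equally phrase everything $2$-categorically, as the horizontal composite of the two module-functor structures, but the explicit pasting above seems the most transparent route in the present context.
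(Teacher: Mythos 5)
Your proposal is correct and follows essentially the same route as the paper: expand $u$ as $t_{-,F(-)}\circ G(s)$ and paste the $G$-image of the pentagon for $(F,s)$, the pentagon for $(G,t)$ at $(X,Y,F(M))$, and the naturality square of $t$ at $s_{Y,M}$, which are exactly the three cells (labelled A, C, B) in the paper's diagram. The treatment of the unit triangle is likewise the same.
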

\begin{proof}
Firstly, observe that $u$ has the required source and target and it is a natural isomorphism by definition.\\
We are left to verify that the following diagrams are commutative\small
\begin{center}
\begin{equation}
\label{pentagonocompo}
\begin{tikzpicture}
 \matrix (m) [matrix of math nodes,row sep=2em,column sep=2em,minimum width=2em]
 {
  {\,}  &G F((X\otimes_\C Y) \otimes_\m M) &{\,} \\
G F(X \otimes_\m (Y \otimes_\m M))&{\,}&(X \otimes_\C Y) \otimes_\mathcal{P} GF(M)\\
X \otimes_\mathcal{P} GF(Y \otimes_\m M) & {\,} &X \otimes_\mathcal{P}(Y \otimes_\mathcal{P} GF(M)) \\  };
\path[-stealth]
(m-1-2) edge node [above] {$\scriptstyle{GF(m_{X,Y,M}}) \, \, \, \, \qquad$} (m-2-1)
(m-1-2) edge node [above] {$\scriptstyle{u_{X \otimes_\C Y,M}}$} (m-2-3)
(m-2-1) edge node [left] {$\scriptstyle{u_{X,Y\otimes_\m M}}$} (m-3-1)
(m-3-1) edge node [above] {$\scriptstyle{\text{id}_X \otimes_\mathcal{N} u_{Y,M}}$} (m-3-3)
(m-2-3) edge node [right] {$\scriptstyle{p_{X,Y,GF(M)}}$} (m-3-3);
\end{tikzpicture}
\end{equation}
\end{center}
and
\begin{center}
\begin{equation*}
\begin{tikzpicture}
 \matrix (m) [matrix of math nodes,row sep=2em,column sep=5em,minimum width=2em]
 {
  GF(1 \otimes_\m M) &{\,}& 1 \otimes_\mathcal{P} GF(M)\\
 {\,}&{\,}&{\,}\\
{\,} & F(M) & {\,} \\ };
 \path[-stealth]
 (m-1-1) edge node [above] {$\scriptstyle{u_{1,M}}$} (m-1-3)
(m-1-1) edge node [above] {$\scriptstyle{\qquad \textcolor{white}{F(L_M)}}$} (m-3-2)
(m-1-3) edge node [above] {$\scriptstyle{\textcolor{white}{\tilde{L}_{F(M)}}\qquad}$} (m-3-2);
\end{tikzpicture}
\end{equation*}
\end{center}
for all $X,Y \in \C$ and $M \in \m$. \\
We prove the commutativity of the first diagram. Applying the definition of the natural isomorphism $u$, the considered diagram is the outer rectangle of the following diagram
\begin{center}
\scriptsize
\begin{equation*}
\begin{tikzpicture}
  \matrix (m) [matrix of math nodes,rotate=90,transform shape,row sep=6em,column sep=6em,minimum width=2em]
  {
   GF(X \otimes_\m (Y \otimes_\m M))& GF((X \otimes_\C Y) \otimes_\m M)& G((X \otimes_\C Y) \otimes_\Nn F(M)) \\
     G(X \otimes_\Nn F(Y \otimes_\m M)) &  G(X \otimes_\Nn (Y \otimes_\Nn F(M))) & (X \otimes_\C Y) \otimes_\mathcal{P} GF(M) \\
  X \otimes_\mathcal{P} GF(Y \otimes_\m M) &  X \otimes_\mathcal{P} G (Y \otimes_\Nn F(M))& X \otimes_\mathcal{P} (Y \otimes_\mathcal{P} GF(M)) \\
 };

  \path[-stealth]
    (m-1-2) edge node [above] {$\scriptstyle{GF(m_{X,Y,M})}$} (m-1-1)

    (m-1-1) edge node [right] {\vspace{3cm}\hspace{4cm}\textcolor{red}{{\huge A}}} (m-2-1)
(m-1-2) edge node [above] {$\scriptstyle{G(s_{X \otimes_\C Y, M})}$} (m-1-3)
(m-2-1) edge node [right] {\vspace{2cm}\hspace{3cm}\textcolor{red}{{\huge B}}} (m-3-1)
(m-1-1) edge node [left] {$\scriptstyle{G(s_{X, Y \otimes_\m M})}$} (m-2-1)
(m-2-1) edge node [above] {$\scriptstyle{G(\text{id}_X \otimes_\Nn s_{Y,M})}$} (m-2-2)
(m-1-3) edge node [right] {$\scriptstyle{\, \, \, \, \, \, G(n_{X, Y, F(M)})}$} (m-2-2)
(m-1-3) edge node [right] {$\scriptstyle{t_{X \otimes_\C Y, F(M)}}$} (m-2-3)
(m-2-3) edge node [left] {\vspace{3cm}\hspace{-5cm}\textcolor{red}{{\huge C}}} (m-3-3)
(m-2-1) edge node [left] {$\scriptstyle{t_{X, F(Y \otimes_\m M)}}$} (m-3-1)
(m-3-1) edge node [above] {$\scriptstyle{\text{id}\otimes_\mathcal{P} G(s_{Y,M}) }$} (m-3-2)
(m-2-2) edge node [right] {$\scriptstyle{t_{X, Y \otimes_\Nn F(M)}}$} (m-3-2)
(m-2-3) edge node [right] {$\scriptstyle{p_{X,Y,GF(M)}}$} (m-3-3)
(m-3-2) edge node [above] {$\scriptstyle{\text{id}_X \otimes_\mathcal{P} t_{Y, F(M)}}$} (m-3-3)
  
;
\end{tikzpicture}
\end{equation*}
\end{center}
\normalsize
 for all $X, Y \in \C, M \in \m$. \\

Diagrams \textcolor{red}{A} and \textcolor{red}{C} are commutative since $(F,s)$ and $(G,t)$ are $\C$-module functors. Moreover, the naturality of $t$ implies the commutativity of \textcolor{red}{B}. This implies that the outer rectangle is commutative, i.e. that diagram \eqref{pentagonocompo} commutes. \\
In a similar way, one can prove that diagram \eqref{functor2} is commutative. 
\end{proof}
\subsection{Transport of structure}
\label{transportofstruct}
In this subsection, we show how we can construct new module categories by means of equivalences and transport of structure. Although this procedure is well-known, we spell out it in full details as it is not covered in the literature.\\  \\
Let $(\C, \otimes_\C, a, 1 )$ be a monoidal category. Let $\m$ be a left $\C$-module category, via the action bifunctor $\otimes_\m \colon \C \times \m \to \m$ and with module associativity constraint $m_{X,Y,M}$. Let $F \colon \m \to \Nn$ be an equivalence of categories. Then, there exists a quasi-inverse $G\colon \Nn \to \m$,  i.e. a functor and natural isomorphisms $\eta$ and $\varepsilon$ such that $G \circ F \overset{\varepsilon}{\approx}\text{id}_\m$ and $F \circ G \overset{\eta}{\approx}\text{id}_\Nn$. \\
We can define in a natural way the bifunctor:
\begin{equation}
\begin{split}
\label{leftact}
\otimes_\Nn \colon \C \times \Nn &\to \Nn \\
(- , \sim)&\mapsto F(- \otimes_\m G(\sim)).
\end{split}
\end{equation}
In an analogous way, for every $X, Y \in \C, N \in \Nn$, we can define the maps $$n_{X,Y,N}\colon (X \otimes_\C Y) \otimes_\Nn N \to X \otimes_\Nn (Y \otimes_\Nn N)$$ as follows:
\begin{equation}
\label{assconstraintleft}
n_{X,Y,N}:=F\bigl((\text{id}_X \otimes_\m \varepsilon^{-1}_{Y\otimes_\m G(N)}) \circ m_{X,Y,G(N)}\bigr).
\end{equation}
We are now in a position to prove the following lemma:
\begin{lemma}
\label{left}
Let $(\C, \otimes_\C, a, 1)$ be a monoidal category and let $(\m, \otimes_\m, m)$ be a left $\C$-module category. Let  $F \colon \m \to \Nn$ be an equivalence of categories, with quasi-inverse $G \colon \Nn \to \m$ such that $G \circ F \overset{\varepsilon}{\approx}\text{id}_\m$ and $F \circ G \overset{\eta}{\approx}\text{id}_\Nn$. The category $\Nn$ endowed with $\otimes_\Nn$ as in Equation \eqref{leftact}, and morphisms $n_{X,Y,N}$ for all $X,Y \in \C$, $N \in \Nn$ as in Equation \eqref{assconstraintleft} is a left $\C$-module category.
\end{lemma}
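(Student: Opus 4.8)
The plan is to verify in turn the conditions in the definition of a left $\C$-module category for the triple $(\Nn,\otimes_\Nn,n)$ built in \eqref{leftact} and \eqref{assconstraintleft}: that $\otimes_\Nn$ is a bifunctor, that the $n_{X,Y,N}$ constitute a natural isomorphism, that $1\otimes_\Nn-$ is an autoequivalence of $\Nn$, and that the pentagon axiom holds. The first three points are formal; the pentagon is the substantial one.

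First I would note that $\otimes_\Nn$ is the composite
\[
\C\times\Nn\xrightarrow{\ \id_\C\times G\ }\C\times\m\xrightarrow{\ \otimes_\m\ }\m\xrightarrow{\ F\ }\Nn,
\]
hence a bifunctor. By \eqref{assconstraintleft}, $n_{X,Y,N}$ is the image under $F$ of the composite of $m_{X,Y,G(N)}$ with $\id_X\otimes_\m\varepsilon^{-1}_{Y\otimes_\m G(N)}$; since $m$ is a natural isomorphism, $\varepsilon$ (hence $\varepsilon^{-1}$) is a natural isomorphism and $F$ preserves isomorphisms, each $n_{X,Y,N}$ is an isomorphism, and its naturality in the triple $(X,Y,N)$ follows from the naturality of $m$ in all three arguments, the naturality of $\varepsilon^{-1}$, and the functoriality of $\otimes_\m$, $G$ and $F$ (naturality in $N$ uses the naturality of $m$ in its last argument pre-composed with $G$). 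Observe that $\eta$ does not enter the data $(\otimes_\Nn,n)$; it is used only to ensure that $G$ is a quasi-inverse of $F$. Finally, $1\otimes_\Nn-=F\circ(1\otimes_\m-)\circ G\colon\Nn\to\Nn$ is a composite of three equivalences — here one uses that $1\otimes_\m-$ is an autoequivalence of $\m$ by hypothesis — hence an autoequivalence of $\Nn$.

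The core of the argument is the pentagon axiom for $n$, and the strategy is to reduce it to the pentagon axiom for $m$ evaluated at $M=G(N)$, together with the naturality of $m$ and of $\varepsilon$. Concretely, I would unfold each of the five edges of the pentagon for $\Nn$ by \eqref{leftact} and \eqref{assconstraintleft} — so the $a_{X,Y,Z}$-edge becomes $F(a_{X,Y,Z}\otimes_\m\id_{G(N)})$ and each occurrence of $n$ becomes $F$ of a composite involving $m$ and an $\varepsilon^{-1}$ — and exhibit the pentagon as the outer boundary of a larger diagram whose inner cells are of three kinds: (i) the image under $F$ of the pentagon for $m$ at $(X,Y,Z,G(N))$; (ii) squares expressing the functoriality of $F$ and of $\otimes_\m$; and (iii) naturality squares for $m$ and for $\varepsilon^{-1}$, whiskered by the appropriate functors and by $F$. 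The delicate point — which I expect to be the main obstacle — is the bookkeeping: iterating $n$ produces nested objects such as $G(F(Z\otimes_\m G(N)))$ and $G(F(Y\otimes_\m G(F(Z\otimes_\m G(N)))))$, so one must insert exactly the right instances of $\varepsilon$ (at $Z\otimes_\m G(N)$, at $Y\otimes_\m G(N)$, and so on) for the diagram to subdivide into cells of these three types. Once the subdivision is written down, each cell commutes for one of the stated reasons, so the outer pentagon commutes and the lemma follows. In hindsight, $\otimes_\Nn$ and $n$ are defined precisely so that $F$, equipped with the structure isomorphism $s_{X,M}=F(\id_X\otimes_\m\varepsilon^{-1}_M)$, becomes a $\C$-module equivalence $\m\to\Nn$; this makes the shape of the big diagram transparent, although it does not bypass the verification.
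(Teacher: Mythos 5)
Your proposal is correct and follows essentially the same route as the paper: unfold $\otimes_\Nn$ and $n$ via \eqref{leftact} and \eqref{assconstraintleft}, dispose of bifunctoriality, naturality, and the unit autoequivalence formally, and reduce the pentagon for $n$ to the pentagon for $m$ at $G(N)$ by inserting naturality squares for $\varepsilon$ and $m$ (whiskered by $F$), exactly the three kinds of cells the paper uses in its proof and Appendix. The only difference is that you leave the explicit subdivision as bookkeeping, whereas the paper writes out the cancellations in the Appendix.
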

\begin{proof}
By definition, $\otimes_\Nn$ is a bifunctor and the maps $n_{X,Y,N}$ for $X, Y \in \C, \, N \in \Nn$ combine to give natural isomorphisms.\\
Moreover, the functor 
\begin{equation*}
\begin{split}
1 \otimes_\Nn - \colon \Nn& \to \Nn \\
N &\mapsto 1 \otimes_\Nn N = F(1 \otimes_\m G(N)) 
\end{split}
\end{equation*}
 is an autoequivalence, since it is the composition of equivalences. \\
It remains to verify that the pentagon rule holds, i.e. that the following diagram commutes
\begin{center}
\small
\begin{equation}
\begin{tikzpicture}[thick,scale=0.6, every node/.style={scale=0.8}]
  \matrix (m) [matrix of math nodes,row sep=2em,column sep=2em,minimum width=2em]
  {
   {\,}  &((X \otimes_\C Y)\otimes_\C Z) \otimes_\Nn N &{\,} \\
   (X \otimes_\C(Y \otimes_\C Z)) \otimes_\Nn N&{\,}&(X \otimes_\C Y) \otimes_\Nn (Z \otimes_\Nn N)\\
X \otimes_\Nn ((Y \otimes_\C Z)\otimes_\Nn N) & {\,} &X \otimes_\Nn(Y \otimes_\Nn (Z \otimes_\Nn N)) \\  };
  \path[-stealth]
    (m-1-2) edge node [above] {$\scriptstyle{a_{X,Y,Z}\otimes_\Nn \text{id}_N \qquad \qquad}$} (m-2-1)
    (m-1-2) edge node [above] {$\scriptstyle{\qquad n_{X \otimes_\C Y,Z,N}}$} (m-2-3)
    (m-2-1) edge node [left] {$\scriptstyle{n_{X,Y\otimes_\C Z, N}}$} (m-3-1)
(m-3-1) edge node [above] {$\scriptstyle{\text{id}_X \otimes_\Nn n_{Y,Z,N}}$} (m-3-3)
(m-2-3) edge node [right] {$\scriptstyle{n_{X,Y,Z \otimes_\Nn N}}$} (m-3-3);
\end{tikzpicture}
\end{equation}
\end{center}
for every $X,Y,Z \in \C$ and $N \in \Nn$. Using Equation \eqref{leftact} and Equation \eqref{assconstraintleft}, this is equivalent to prove that:
\small
\begin{equation}
\label{pentag}
\begin{split}
&F\bigl((\text{id}_X \otimes_\m GF((\text{id}_Y \otimes_\m \varepsilon^{-1}_{Z \otimes_\m G(N)})\circ m_{Y,Z,G(N)})) \circ (\text{id}_X \otimes_\m \varepsilon^{-1}_{(Y \otimes_\C Z) \otimes_\m G(N)}) \bigr)\circ
\\
&\circ F \bigl( m_{X,Y \otimes_\C Z, G(N)} \circ (a_{X,Y,Z} \otimes_\m \text{id}_{G(N)})\bigr)=\\
&F \hspace{-2pt}\bigl(\hspace{-2pt}(\text{id}_X \otimes_\m \varepsilon^{-1}_{Y \otimes_\m (GF(Z \otimes_\m G(N)))})\hspace{-2pt} \circ\hspace{-2pt}  m_{X,Y,GF(Z \otimes_\m G(N))} \hspace{-2pt}\circ \hspace{-2pt}  (\text{id}_X \otimes_\C \text{id}_Y \otimes_\m \varepsilon^{-1}_{Z \otimes_\m G(N)}) \bigr)\circ \\
&\circ  F( m_{X \otimes_\C Y,Z,G(N)}).
\end{split}
\end{equation}
\normalsize
This is achived using some commutative diagrams. In particular, by the  naturality of $\varepsilon$, the diagrams
\begin{center}
\begin{equation}
\label{A}
\begin{tikzpicture}
  \matrix (m) [matrix of math nodes,row sep=3em,column sep=6em,minimum width=2em]
  {
    GF((Y \otimes_\C Z)\otimes_\m G(N)) & GF(Y \otimes_\m (Z \otimes_\m G(N)))\\
(Y \otimes_\C Z)\otimes_\m G(N)&Y \otimes_\m (Z \otimes_\m G(N)) \\ };
  \path[-stealth]
    (m-1-1) edge node [above] {$\scriptstyle{GF(m_{Y,Z,G(N)})}$} (m-1-2)
    (m-1-1) edge node [left] {$\scriptstyle{\varepsilon_{(Y \otimes_\C Z)\otimes_\m G(N)}}$} (m-2-1)
    (m-2-1) edge node [above] {$\scriptstyle{m_{Y,Z,G(N)}}$} (m-2-2)
(m-1-2) edge node [right] {$\scriptstyle{\varepsilon_{Y \otimes_\m (Z \otimes_\m G(N))}}$} (m-2-2);
\end{tikzpicture}
\end{equation}
\end{center}
and 
\begin{center}
\begin{equation}
\label{C}
\begin{tikzpicture}[thick,scale=0.6, every node/.style={scale=0.95}]
  \matrix (m) [matrix of math nodes,row sep=3em,column sep=6em,minimum width=2em]
  {
    GF(Y \otimes_\m ( Z\otimes_\m G(N)))& Y\otimes_\m (Z \otimes_\m G(N))\\
GF(Y\otimes_\m GF (Z \otimes_\m G(N)))&Y\otimes_\m GF (Z \otimes_\m G(N))\\ };
  \path[-stealth]
    (m-1-1) edge node [above] {$\scriptstyle{\varepsilon_{Y \otimes_\m (Z \otimes_\m G(N))}}$} (m-1-2)
    (m-1-1) edge node [left] {$\scriptstyle{GF(\text{id}_Y \otimes_\m \varepsilon^{-1}_{Z \otimes_\m G(N)})}$} (m-2-1)
    (m-2-1) edge node [above] {$\scriptstyle{\varepsilon_{Y \otimes_\m GF(Z \otimes_\m G(N))}}$} (m-2-2)
(m-1-2) edge node [right] {$\scriptstyle{\text{id}_Y \otimes_\m \varepsilon^{-1}_{Z \otimes_\m G(N)}}$} (m-2-2);
\end{tikzpicture}
\end{equation}
\end{center}
commute, for every $Y,Z \in \C$ and $N \in \Nn$.\\
Furthermore, by the naturality of $m$ with respect to the functors $(-\otimes_\C -) \otimes_\m -$ and $- \otimes_\m (- \otimes_\m -)$, the diagram:
 \begin{center}
 \small
\begin{equation}
\label{B}
\begin{tikzpicture}
  \matrix (m) [matrix of math nodes,row sep=3.5em,column sep=9em,minimum width=2em]
  {
    (X\otimes_\C Y) \otimes_\m GF(Z \otimes_\m G(N))& (X\otimes_\C Y) \otimes_\m (Z \otimes_\m G(N))\\
X\otimes_\m( Y \otimes_\m GF(Z \otimes_\m G(N)))&X\otimes_\m( Y \otimes_\m (Z \otimes_\m G(N))) \\ };
  \path[-stealth]
    (m-1-1) edge node [above] {$\scriptstyle{\text{id}_{X \otimes_\C Y} \otimes_\m \varepsilon_{Z \otimes_\m G(N)}}$} (m-1-2)
    (m-1-1) edge node [left] {$\scriptstyle{m_{X,Y,GF(Z \otimes_\m G(N))}}$} (m-2-1)
    (m-2-1) edge node [above] {$\scriptstyle{\text{id}_X \otimes_\m(\text{id}_Y \otimes_\m \varepsilon_{Z \otimes_\m G(N)})}$} (m-2-2)
(m-1-2) edge node [right] {$\scriptstyle{m_{X,Y,Z \otimes_\m G(N)}}$} (m-2-2);
\end{tikzpicture}
\end{equation}
\end{center}
\normalsize
is commutative for every $X,Y,Z \in \C$ and $ N \in \Nn$.\\
Putting together this series of commutative diagrams with the fact that $\m$ is a $\C$-module category one gets that Equation \eqref{pentag} holds. We refer the reader to the Appendix for full details.
\end{proof}
Moreover, we can prove that the functors $F$ and $G$ as above are $\C$-module functors (Definition \ref{modulefunctor}). 
\begin{lemma}
\label{C-invariants}
Let $(\C, \otimes_\C, a, 1)$ be a monoidal category and let $(\m, \otimes_\m, m)$ be a left $\C$-module category. Let $F \colon \m \to \Nn$ be an equivalence, with quasi-inverse $G \colon \Nn \to \m$ such that $G \circ F \overset{\varepsilon}{\approx}\text{id}_\m$ and $F \circ G \overset{\eta}{\approx}\text{id}_\Nn$ and consider the category $\Nn$ endowed with $\otimes_\Nn$ as in Equation \eqref{leftact}, with the morphism $n_{X,Y,N}$ as in Equation \eqref{assconstraintleft}. \\
Let $s_{X,M}:=F(\text{id}_X \otimes_\m \varepsilon_M^{-1})$ and $t_{X,N}:=\varepsilon_{X \otimes_\m G(N)}$, for every $X \in \C, M \in \m$ and $N \in \Nn$. Then the pairs $(F,s)$ and $(G,t)$ are $\C$-module functors, so $\m$ and $\Nn$ are equivalent as $\C$-module categories.
\end{lemma}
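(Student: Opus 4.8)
The plan is to verify directly that the pairs $(F,s)$ and $(G,t)$ satisfy the two axioms of a $\C$-module functor from Definition \ref{modulefunctor}, namely the commutativity of diagrams \eqref{functor1} and \eqref{functor2}. First I would check that $s$ and $t$ are natural isomorphisms with the correct source and target: for $s_{X,M} = F(\id_X \otimes_\m \varepsilon_M^{-1})$ we have $F(X \otimes_\m M) \to F(X \otimes_\m GF(M)) = X \otimes_\Nn F(M)$ by the definition \eqref{leftact} of $\otimes_\Nn$, and $\varepsilon_M^{-1}$ is an isomorphism so $s_{X,M}$ is too; naturality in $X$ and $M$ follows from naturality of $\varepsilon$ and functoriality of $\otimes_\m$ and $F$. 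Similarly $t_{X,N} = \varepsilon_{X \otimes_\m G(N)}$ runs $G(X \otimes_\Nn N) = GF(X \otimes_\m G(N)) \to X \otimes_\m G(N)$, which is the correct target since the $\C$-action on $\Nn$ transported back through $G$ must land in $\m$'s action; again naturality is inherited from $\varepsilon$.

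Next I would establish the pentagon axiom \eqref{functor1} for $(F,s)$. Unwinding the definitions, this becomes an identity between morphisms in $\Nn$ built from $F$ applied to a composite in $\m$; after applying the faithful-on-the-image functoriality, it reduces to an equality in $\m$ involving $m_{X,Y,M}$, $\id_X \otimes_\m \varepsilon^{-1}$, and the $GF$-images thereof. The key tools are exactly the three commuting squares \eqref{A}, \eqref{B}, \eqref{C} already isolated in the proof of Lemma \ref{left} — coming from naturality of $\varepsilon$ and of $m$ — together with the pentagon for $m$ itself (which holds since $\m$ is a $\C$-module category). I would assemble these into one large commuting diagram whose outer boundary is \eqref{functor1}, just as in the proof of Lemma \ref{left}; this is essentially the same bookkeeping and I would again defer the fully spelled-out diagram chase to the Appendix. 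The unit axiom \eqref{functor2} for $(F,s)$ follows because $1 \otimes_\Nn F(M) = F(1 \otimes_\m GF(M))$ and $s_{1,M} = F(\id_1 \otimes_\m \varepsilon_M^{-1})$, so under the identification that $1 \otimes_\m -$ is an autoequivalence on $\m$ the triangle collapses to a naturality statement for $\varepsilon$ against the unit isomorphism $1 \otimes_\m - \Rightarrow \id_\m$.

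For $(G,t)$ I would proceed analogously, with $t_{X,N} = \varepsilon_{X \otimes_\m G(N)}$ playing the role of the structure isomorphism; the required pentagon again unwinds to a combination of naturality of $\varepsilon$, naturality of $m$, and the $\m$-pentagon, now with no outer application of $F$ so the calculation is if anything slightly lighter. Finally, since $F$ is by hypothesis an equivalence of underlying categories and we have exhibited the compatible structure isomorphism $s$, the pair $(F,s)$ is an equivalence of $\C$-module categories, which gives the last assertion; one can equally note that $(G,t)$ is a $\C$-module functor quasi-inverse to it.

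The main obstacle is purely combinatorial: organizing the large diagram for \eqref{functor1} so that every inner region is visibly one of \eqref{A}, \eqref{B}, \eqref{C}, a naturality square for $\varepsilon$, or the pentagon for $m$, without sign or orientation mistakes in the $GF$-decorated arrows — exactly the sort of verification that Lemma \ref{left} already handled for $n_{X,Y,N}$, so I expect no conceptual difficulty, only careful diagram management, which I would relegate to the Appendix.
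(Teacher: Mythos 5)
Your proposal is correct and takes essentially the same route as the paper's proof: check source, target and naturality of $s$ and $t$, then unwind the definitions of $\otimes_\Nn$, $n$, $s$, $t$ and verify diagrams \eqref{functor1} and \eqref{functor2} by pasting naturality squares for $\varepsilon$ and $m$, concluding the module-category equivalence from the fact that $F$ is an equivalence. The only (harmless) overstatement is that the pentagon for $m$ and the specific squares \eqref{A}--\eqref{C} are not actually needed: for $(F,s)$ the relevant diagram splits into a single naturality square for $m$ and a single naturality square for $\varepsilon$, and for $(G,t)$ naturality of $\varepsilon$ alone suffices, the $\m$-pentagon being used only in Lemma \ref{left}.
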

\begin{proof}
We prove this result for $F$, for $G$ the proof will be analogous. \\
Firstly, notice that the morphism $s$ has the required source and target and it is a natural isomorphism, since it is the composition of a natural isomorphism and an equivalence. We have now to check that Diagrams \eqref{functor1} and \eqref{functor2} are commutative.\\
In particular, Diagram \eqref{functor1} reduces to 
\begin{center}
\small
\begin{equation*}
\begin{tikzpicture}
 \matrix (m) [matrix of math nodes,row sep=2em,column sep=1em,minimum width=2em]
 {
  {\,}  & F((X\otimes_\C Y) \otimes_\m M) &{\,} \\
F(X \otimes_\m (Y \otimes_\m M))&{\,}&F((X \otimes_\C Y) \otimes_\m  GF(M))\\
F(X \otimes_\m GF(Y \otimes_\m M)) & {\,} &F(X \otimes_\m GF(Y \otimes_\m GF(M))) \\  };
\path[-stealth]
(m-1-2) edge node [above] {$\scriptstyle{F(m_{X,Y,M})}\qquad$} (m-2-1)
(m-1-2) edge node [above] {$\scriptstyle{s_{X \otimes_\C Y,M}}$} (m-2-3)
(m-2-1) edge node [left] {$\scriptstyle{s_{X,Y\otimes_\m M}}$} (m-3-1)
(m-3-1) edge node [above] {$\scriptstyle{\text{id}_X \otimes_\mathcal{N} s_{Y,M}}$} (m-3-3)
(m-2-3) edge node [right] {$\scriptstyle{n_{X,Y,F(M)}}$} (m-3-3);
\end{tikzpicture}
\end{equation*}
\end{center}
for all $X, Y \in \C$ and $M \in \m$. Using the definition of $n$ and $s$, the previous diagram becomes the outer diagram of the following
\small
\begin{center}
\begin{equation*}
\begin{tikzpicture}
  \matrix (m) [matrix of math nodes,rotate=90,transform shape,row sep=4em,column sep=3em,minimum width=2em]
  {
  F((X \otimes_\C Y) \otimes_\m M)&\,&F((X \otimes_\C Y) \otimes_\m GF(M)) \\
     F(X \otimes_\m (Y \otimes_\m M)) & \, & F(X \otimes_\m (Y \otimes_\m GF(M))) \\
 F(X \otimes_\m GF(Y \otimes_\m M))& \, & F(X \otimes_\m GF(Y \otimes_\m GF(M)))\\
 };

  \path[-stealth]
    (m-1-1) edge node [above] {$\scriptstyle{F(\text{id}_{X \otimes_\C Y} \otimes_\m \varepsilon^{-1}_M)}$} (m-1-3)
    (m-1-1) edge node [left] {$\scriptstyle{F(m_{X,Y,M})}$} (m-2-1)
    (m-2-1) edge node [left] {$\scriptstyle{F(\text{id}_{X} \otimes_\m \varepsilon^{-1}_{Y \otimes_\m M})}$} (m-3-1)
    (m-3-1) edge node [above] {$\scriptstyle{F(\text{id}_X \otimes_\m GF(\text{id}_Y \otimes_\m \varepsilon_M^{-1}))}$} (m-3-3)
    (m-2-1) edge node [above] {$\scriptstyle{F(\text{id}_{X } \otimes_\m (\text{id}_Y \otimes_\m \varepsilon_M^{-1}))}$} (m-2-3)
    (m-1-3) edge node [right] {$\scriptstyle{F(m_{X,Y,GF(M)})}$} (m-2-3)
(m-2-3) edge node [right] {$\scriptstyle{F(\text{id}_X \otimes_\m \varepsilon^{-1}_{Y \otimes_\m GF(M)})}$} (m-3-3)
;
\end{tikzpicture}
\end{equation*}
\end{center}
for all $X, Y \in \C$ and $M \in \m$. By naturality of $m$, the upper rectangle commutes, while the naturality of $\varepsilon$ makes the lower rectangle commutative. As a consequence, we get the commutativity of the outer rectangle.\\
We are left to prove that Diagram \eqref{functor2} is commutative, that is the following
\small
\begin{center}
\begin{equation*}
\begin{tikzpicture}
  \matrix (m) [matrix of math nodes,rotate=90,transform shape,row sep=6em,column sep=5em,minimum width=2em]
  {
  F(1 \otimes_\m M)&F(1 \otimes_\m GF(M)) \\
    F(M) & FGF(M)\\
 };

  \path[-stealth]
   (m-1-1) edge node [above] {$\scriptstyle{F(\text{id}_1 \otimes_\m \varepsilon^{-1}_M)}$} (m-1-2)
    (m-1-1) edge node [left] {$\scriptstyle{\,}$} (m-2-1)
    (m-2-1) edge node [above] {$\scriptstyle{\varepsilon^{-1}_M}$} (m-2-2)
    (m-1-2) edge node [right] {$\scriptstyle{\,}$} (m-2-2)
;
\end{tikzpicture}
\end{equation*}
\end{center}
is commutative for any $M \in \m$. This follows from the naturality of $\varepsilon$.
\end{proof}
We can dualize the above construction for a right $\C$-module category. \\ \\
Let $\C$ be a monoidal category and let $\m$ be a right $\C$-module category, via the action bifunctor $\otimes^\m \colon \m \times \C \to \m$ and with module associativity constraint $m_{X,Y,M}^r$. Consider as before, an equivalence $F \colon \m \to \Nn$ with quasi-inverse $G$ and natural isomorphisms $\eta$ and $\varepsilon$, such that $G \circ F \overset{\varepsilon}{\approx}\text{id}_\m$ and $F \circ G \overset{\eta}{\approx}\text{id}_\Nn$. Then, we have the right-handed version of the definitions given above.
We define the bifunctor 
\begin{equation}
\label{rightact}
\begin{split}
\otimes^\Nn \colon \Nn \times \C &\to \Nn \\
(-, \sim) &\mapsto F(G(-) \otimes^\m \sim)
\end{split}
\end{equation}
and the maps $$n_{N,X,Y}^r \colon (N \otimes^\Nn X) \otimes^\Nn Y \to N \otimes^\Nn (X \otimes Y)$$ as 
\begin{equation}
\label{rightass}
n_{N,X,Y}^r:=F(m_{G(N), X, Y}^r\circ(\varepsilon_{G(N) \otimes^\m X}\otimes^\m \text{id}_Y))
\end{equation}
for every $X,Y \in \C$ and $N \in \Nn$.\\
In the same way as for the left $\C$-module category, we can prove the right-handed version of Lemma \ref{left} and Lemma \ref{C-invariants}.
\begin{lemma}
\label{right}
Let $(\C, \otimes_\C, a, 1)$ be a monoidal category and let $(\m, \otimes^\m,m^r)$ be a right $\C$-module category.  Let $F \colon \m \to \Nn$ be an equivalence of categories, with quasi-inverse $G \colon \Nn \to \m$  and natural isomorphisms $\eta$ and $\varepsilon$ such that $G \circ F \overset{\varepsilon}{\approx}\text{id}_\m$ and $F \circ G \overset{\eta}{\approx}\text{id}_\Nn$. The category $\Nn$ endowed with $\otimes^\Nn$ as in Equation \eqref{rightact}, with the morphism $n^r_{N, X, Y}$ as in Equation \eqref{rightass} is a right $\C$-module category.\\
Moreover, let $s^r_{X,M}:=F(\varepsilon_M^{-1} \otimes^\m \text{id}_X)$ and let $t_{X,N}^r:=\varepsilon_{G(N) \otimes^\m X}$, for all $X \in \C, M \in \m$ and $N \in \Nn$. Then, the pairs $(F,s^r)$ and $(G,t^r)$ are right $\C$-module functors.
\end{lemma}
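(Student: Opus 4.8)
The plan is to deduce this lemma from Lemma~\ref{left} and Lemma~\ref{C-invariants} by passing to the opposite monoidal category, so that no diagram has to be chased anew. First I would spell out the standard dictionary: a right $\C$-module category is the same datum as a left $\C^{\text{op}}$-module category, the structure $(\m, \otimes^\m, m^r)$ corresponding to the left $\C^{\text{op}}$-module structure $(\m, \widetilde{\otimes}_\m, \widetilde{m})$ with $X \,\widetilde{\otimes}_\m\, M := M \otimes^\m X$ and $\widetilde{m}_{X,Y,M} := m^r_{M,Y,X}$, using $X \otimes_\C^{\text{op}} Y = Y \otimes_\C X$ and $a^{\text{op}}_{X,Y,Z} = a_{Z,Y,X}^{-1}$; under this dictionary the left-module pentagon for $\widetilde m$ is, by definition, the right-module pentagon for $m^r$.

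Then I would apply Lemma~\ref{left} to the monoidal category $\C^{\text{op}}$, the left $\C^{\text{op}}$-module category $(\m, \widetilde{\otimes}_\m, \widetilde{m})$, and the same equivalence $F \colon \m \to \Nn$ with quasi-inverse $G$ and natural isomorphisms $\varepsilon,\eta$. This endows $\Nn$ with a left $\C^{\text{op}}$-module structure, i.e.\ a right $\C$-module structure, and the key check is that, after applying the dictionary, the transported action of Equation~\eqref{leftact} becomes $X \,\widetilde{\otimes}_\Nn\, N = F(X \,\widetilde{\otimes}_\m\, G(N)) = F(G(N) \otimes^\m X) = N \otimes^\Nn X$ of Equation~\eqref{rightact}, and that substituting $\widetilde m_{X,Y,M} = m^r_{M,Y,X}$ and $\id_X \widetilde{\otimes}_\m(-) = (-)\otimes^\m \id_X$ into the transported associativity of Equation~\eqref{assconstraintleft} recovers $n^r_{N,X,Y}$ of Equation~\eqref{rightass} (up to inverting it, according to the chosen direction convention for the module associativity constraint). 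This gives that $(\Nn, \otimes^\Nn, n^r)$ is a right $\C$-module category.

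For the module functors I would argue identically with Lemma~\ref{C-invariants}: it provides left $\C^{\text{op}}$-module functors $(F, \widetilde s)$ and $(G, \widetilde t)$ with $\widetilde s_{X,M} = F(\id_X \widetilde{\otimes}_\m \varepsilon_M^{-1})$ and $\widetilde t_{X,N} = \varepsilon_{X \widetilde{\otimes}_\m G(N)}$, and rewriting these through the dictionary yields $\widetilde s_{X,M} = F(\varepsilon_M^{-1} \otimes^\m \id_X) = s^r_{X,M}$ and $\widetilde t_{X,N} = \varepsilon_{G(N)\otimes^\m X} = t^r_{X,N}$, so $(F, s^r)$ and $(G, t^r)$ are right $\C$-module functors and $\m$, $\Nn$ are equivalent as right $\C$-module categories.

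The only real obstacle is the bookkeeping in this translation — tracking the reversal of tensor factors, the interchange $a \leftrightarrow a^{\text{op}}$, and the direction in which each associativity isomorphism is written — so that Equations~\eqref{rightact}, \eqref{rightass} and the formulas for $s^r$ and $t^r$ are literally the images of Equations~\eqref{leftact}, \eqref{assconstraintleft} and of the data $s,t$ of Lemma~\ref{C-invariants} under the duality. Nothing is conceptually new, but this is precisely where a direction slip could hide; if the dictionary turns out to be awkward to set up cleanly, the alternative is to bypass $\C^{\text{op}}$ and simply re-run the diagram chases of Lemma~\ref{left} and Lemma~\ref{C-invariants} with all arrows and tensor factors reversed, invoking the right-module pentagon and the naturality of $\varepsilon$ and of $m^r$ exactly as in those proofs (the mirror image of the Appendix computation).
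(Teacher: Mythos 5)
Your proposal is correct, but it takes a different (and arguably cleaner) route than the paper: the paper gives no separate argument for Lemma \ref{right} and simply asserts that one proves it ``in the same way'' as Lemma \ref{left} and Lemma \ref{C-invariants}, i.e.\ by re-running the mirror-image diagram chases (the right-handed analogue of the Appendix computation), whereas you deduce it formally from the already-proven left-handed lemmas by applying them to $\C^{\text{op}}$, using the paper's own identification of a right $\C$-module category with a left $\C^{\text{op}}$-module category. Your dictionary is set up correctly: with $X\,\widetilde{\otimes}_\m M:=M\otimes^\m X$ and $\widetilde{m}_{X,Y,M}:=m^r_{M,Y,X}$ the sources and targets match, the transported action of Equation \eqref{leftact} literally becomes Equation \eqref{rightact}, and the data of Lemma \ref{C-invariants} unwind to $s^r_{X,M}=F(\varepsilon_M^{-1}\otimes^\m\mathrm{id}_X)$ and $t^r_{X,N}=\varepsilon_{G(N)\otimes^\m X}$ as required. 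The one point you rightly flag is the direction convention: the paper's $n^r_{N,X,Y}$ in Equation \eqref{rightass} is written as a map $(N\otimes^\Nn X)\otimes^\Nn Y\to N\otimes^\Nn(X\otimes_\C Y)$, which is opposite to the direction used for $m^r$ in the bimodule definition, so the constraint produced by transporting Equation \eqref{assconstraintleft} through your dictionary is the inverse of \eqref{rightass}; since all constraints are isomorphisms and a coherence diagram commutes if and only if the diagram with the corresponding arrows inverted does, this is harmless, but it is exactly the kind of bookkeeping that must be stated explicitly for the reduction to be airtight. What your approach buys is that no new diagram chase is needed at all; what the paper's approach buys is that the formulas \eqref{rightact}, \eqref{rightass}, $s^r$, $t^r$ appear directly in the stated form, without the intermediate translation through $\C^{\text{op}}$.
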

It is now natural to expect that the transported structure of a $(\C, \D)$-bimodule category by means of an equivalence is again a $(\C, \D)$-bimodule category.\\
Let $(\C, \otimes_\C, a, 1)$ and $(\D,\otimes_\mathcal{D},\tilde{a}, \tilde{1})$ be two monoidal categories and let \\$(\m, m, m^r, b)$ be a $(\C,\D)$-bimodule category. Consider $F \colon \m \to \Nn$ an equivalence, with quasi-inverse $G$, and natural isomorphisms $\eta$ and $\varepsilon$ such that $G \circ F \overset{\varepsilon}{\approx}\text{id}_\m$ and $F \circ G \overset{\eta}{\approx}\text{id}_\Nn$. Using the above notation we define for every $X \in \C, Z \in \D$ and $N \in \mathcal{N}$, the maps $$p_{X,N,Z}\colon (X \otimes_\Nn N) \otimes^\Nn Z \to X \otimes_\Nn (N \otimes^\Nn Z)$$ as
\begin{equation}
\label{compatible}
p_{X,N,Z}:=F\bigl( (\text{id}_X \otimes_\m \varepsilon^{-1}_{G(N) \otimes^\m Z}) \circ b_{X,G(N),Z} \circ (\varepsilon_{X \otimes_\m G(N)} \otimes^\m \text{id}_Z)\bigr).
\end{equation}

We are now in a position to prove the following result.
\begin{lemma}
\label{lemmabimod}
Let $(\C, \otimes_\C, a, 1)$ and $(\D, \otimes_\D, \tilde{a}, \tilde{1})$ be monoidal categories and let $(\m, \otimes_\m, \otimes^\m, m, m^r, b)$ be a $(\C,\D)$-bimodule category. Consider $F \colon \m \to \Nn$ an equivalence, with quasi-inverse $G$, and natural isomorphisms $\eta$ and $\varepsilon$ such that $G \circ F \overset{\varepsilon}{\approx}\text{id}_\m$ and $F \circ G \overset{\eta}{\approx}\text{id}_\Nn$. The category $\Nn$ endowed with $\otimes_\Nn$, $\otimes^\Nn$ as in Equation \eqref{leftact} and Equation \eqref{rightact}, the morphisms $n_{X,Y,N}$,  $n_{N, X, Y}^r$ and $p_{X,N,Z}$ for any $X, Y, Z \in \C$ and $N \in \Nn$ as in Equation \eqref{assconstraintleft}, \eqref{rightass} and in Equation \eqref{compatible}, respectively, is a  $(\C,\D)$-bimodule category. \\
Furthermore, the functors $(F, s, s^r)$ and $(G, t, t^r)$ with $s$ and $t$ as in Lemma \ref{C-invariants} and $s^r$, $t^r$ as in Lemma \ref{right} are $(\C, \mathcal{D})$-bimodule functors.
\end{lemma}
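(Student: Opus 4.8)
The plan is to assemble the statement from the one-sided results already in hand and then treat only the genuinely new piece, namely the middle associativity constraint $p$ and the hexagon it must satisfy. First I would forget the right $\D$-action on $\m$: then $\m$ is a left $\C$-module category, so Lemma~\ref{left} gives that $\Nn$ with $\otimes_\Nn$ as in \eqref{leftact} and the constraints $n_{X,Y,N}$ of \eqref{assconstraintleft} is a left $\C$-module category, and Lemma~\ref{C-invariants} gives that $(F,s)$ and $(G,t)$ are left $\C$-module functors. Dually, forgetting the left $\C$-action, Lemma~\ref{right} gives that $\Nn$ with $\otimes^\Nn$ as in \eqref{rightact} and the constraints $n^r_{N,X,Y}$ of \eqref{rightass} is a right $\D$-module category and that $(F,s^r)$, $(G,t^r)$ are right $\D$-module functors. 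What remains is therefore: (i) $p_{X,N,Z}$ of \eqref{compatible} is a natural isomorphism making the two bimodule pentagons \eqref{bim1} and \eqref{bim2} commute; and (ii) the compatibility hexagon \eqref{functorbimodule} holds for $(F,s,s^r)$ and for $(G,t,t^r)$.

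For (i), note first that $p_{X,N,Z}$ is, by construction, $F$ applied to a composite of $b_{X,G(N),Z}$ (a natural isomorphism because $\m$ is a bimodule category) pre- and post-composed with instances of $\varepsilon^{\pm 1}$; since $F$ is an equivalence this is a natural isomorphism with exactly the source and target demanded of a middle associativity constraint for $\otimes_\Nn$, $\otimes^\Nn$. To check \eqref{bim1} I would unfold every arrow using \eqref{leftact}, \eqref{rightact}, \eqref{assconstraintleft} and \eqref{compatible}, observe that it is the image under $F$ of a diagram in $\m$, and — since $F$, being an equivalence, is faithful — reduce to commutativity of that diagram in $\m$. The latter I would then factor through the coherence pentagon \eqref{bim1} for $\m$, using naturality of $\varepsilon$ to slide the $GF$-decorated objects past the structural morphisms and naturality of $m$ (resp. of $m^r$, $b$) to slide $\varepsilon$ past the module constraints, exactly in the style of the chase performed for Lemma~\ref{left}. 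The diagram \eqref{bim2} is handled by the same recipe, now invoking the coherence pentagon \eqref{bim2} for $\m$ together with naturality of $m^r$ and of $b$.

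For (ii), by Lemma~\ref{C-invariants} the pair $(F,s)$ is a left $\C$-module functor and by Lemma~\ref{right} the pair $(F,s^r)$ is a right $\D$-module functor, so only the hexagon \eqref{functorbimodule} needs checking. Substituting $s_{X,M}=F(\id_X\otimes_\m\varepsilon_M^{-1})$, $s^r_{M,Y}=F(\varepsilon_M^{-1}\otimes^\m\id_Y)$, the definitions of $p$ in $\m$ and of $p=p_{X,N,Z}$ in $\Nn$ from \eqref{compatible}, the hexagon again becomes the image under $F$ of a diagram in $\m$ whose commutativity collapses, by naturality of $\varepsilon$ and of $b$, to an identity (the coherence of $\m$ is not even needed here, only the naturality squares). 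The case of $(G,t,t^r)$ is the mirror argument, with $t_{X,N}=\varepsilon_{X\otimes_\m G(N)}$ and $t^r_{N,Y}=\varepsilon_{G(N)\otimes^\m Y}$, using in addition the triangle identities relating $\eta$ and $\varepsilon$.

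The main obstacle is sheer bookkeeping rather than any conceptual point: once the structural arrows in \eqref{bim1}, \eqref{bim2} and \eqref{functorbimodule} are expanded, the diagrams are large, and the non-formal inputs are exactly the coherence of $\m$ as a $(\C,\D)$-bimodule category and the naturality of $\varepsilon$, $m$, $m^r$, $b$. As with Lemmas~\ref{left} and \ref{C-invariants}, I would give the reduction in the body of the proof and relegate the explicit pasting of commutative squares to the Appendix.
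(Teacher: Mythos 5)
Your proposal is correct and follows essentially the same route as the paper: reduce the one-sided structure to Lemmas \ref{left}, \ref{C-invariants} and \ref{right}, note $p$ is a natural isomorphism by construction, and verify \eqref{bim1}, \eqref{bim2} and \eqref{functorbimodule} by unfolding the definitions and chasing naturality squares for $\varepsilon$, $m$, $m^r$, $b$ until the bimodule coherence of $\m$ (or, for the functor hexagon, mere naturality of $b$) finishes the argument. Your appeal to faithfulness of $F$ to pull the diagrams back into $\m$ is only a cosmetic variant of the paper's manipulation of the $F$-images via functoriality, so the two proofs coincide in substance.
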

\begin{proof}
We first prove that $(\Nn, \otimes_\Nn, \otimes^\Nn, n, n^r, p)$ is a $(\C, \mathcal{D})$-bimodule category.\\
By Lemma \ref{left}, Lemma \ref{right} and by the fact that $p$ is a natural isomorphism by definition, we only need to prove that diagrams \eqref{bim1} and \eqref{bim2} commute.\\
Consider the first diagram. We want to show that the diagram
\begin{center}
\begin{equation*}
\begin{tikzpicture}[thick,scale=0.6, every node/.style={scale=0.85}]
  \matrix (m) [matrix of math nodes,row sep=2em,column sep=2em,minimum width=2em]
  {
   {\,}  &((X \otimes_\C Y)\otimes_\Nn N) \otimes^\Nn Z &{\,} \\
   (X \otimes_\Nn (Y \otimes_\Nn N)) \otimes^\Nn Z&{\,}&(X \otimes_\C Y) \otimes^\Nn (N \otimes^\Nn Z)\\
X \otimes_\Nn ((Y \otimes_\Nn N)\otimes^\Nn Z) & {\,} &X \otimes_\Nn (Y \otimes_\Nn (N \otimes^\Nn Z)) \\  };
  \path[-stealth]
    (m-1-2) edge node [above] {$\scriptstyle{n_{X,Y,N}\otimes^\Nn \text{id}_Z \qquad \qquad}$} (m-2-1)
    (m-1-2) edge node [above] {$\scriptstyle{\qquad p_{X \otimes_\C Y,N,Z}}$} (m-2-3)
    (m-2-1) edge node [left] {$\scriptstyle{p_{X,Y\otimes_\Nn N, Z}}$} (m-3-1)
(m-3-1) edge node [above] {$\scriptstyle{\text{id}_X \otimes_\Nn p_{Y,N,Z}}$} (m-3-3)
(m-2-3) edge node [right] {$\scriptstyle{n_{X,Y,N \otimes^\Nn Z}}$} (m-3-3);
\end{tikzpicture}
\end{equation*}
\end{center}
is commutative for every $X,Y \in \C, Z \in \D$ and $N \in \Nn$.
By Equations \eqref{assconstraintleft} and \eqref{compatible}, this is equivalent to verify that 
\small
\begin{equation}
\label{bimod}
\begin{split}
&F(\text{id}_X \otimes_\m GF((\text{id}_Y \otimes_\m \varepsilon^{-1}_{G(N) \otimes^\m Z}) \circ b_{Y,G(N),Z} \circ(\varepsilon_{Y \otimes_\m G(N)} \otimes^\m \text{id}_Z))) \circ \\
&F((\text{id}_X \otimes_\m \varepsilon^{-1}_{GF(Y \otimes_\m G(N))\otimes^\m \text{id}_Z}) \hspace{-2pt}\circ \hspace{-2pt}b_{X,GF(Y \otimes_\m G(N)),Z}\hspace{-2pt} \circ \hspace{-2pt} (\varepsilon_{X \otimes_\m GF(Y \otimes_\m G(N))} \otimes^\m \text{id}_Z)) \circ \\
&F(GF((\text{id}_X \otimes_\m \varepsilon^{-1}_{Y \otimes_\m G(N)}) \circ m_{X,Y,G(N)}) \otimes^\m \text{id}_Z)=\\
&F((\text{id}_X \otimes_\m \varepsilon^{-1}_{Y \otimes_\m GF(G(N) \otimes^\m Z)}) \circ m_{X,Y,GF(G(N) \otimes^\m Z)} \circ (id_{X \otimes_\C Y} \otimes_\m \varepsilon^{-1}_{G(N) \otimes^\m Z})) \circ \\
&F(b_{X \otimes_\C Y,G(N),Z} \circ (\varepsilon_{(X \otimes_\C Y) \otimes_\m G(N)} \otimes^\m \text{id}_Z)).
\end{split}
\end{equation}
\normalsize
We will make use of a series of commutative diagrams.
By naturality of $\varepsilon$, the following five diagrams
\begin{center}
\begin{equation}
\label{1}
\begin{tikzpicture}
  \matrix (m) [matrix of math nodes,row sep=3em,column sep=6em,minimum width=2em]
  {
    GF(X \otimes_\m (Y \otimes_\m G(N))) & X \otimes_\m (Y \otimes_\m G(N))\\
GF((X \otimes_\C Y) \otimes_\m G(N)))&(X \otimes_\C Y) \otimes_\m G(N)), \\ };
  \path[-stealth]
    (m-1-1) edge node [above] {$\scriptstyle{\varepsilon_{X \otimes_\m (Y \otimes_\m G(N))}}$} (m-1-2)
    (m-2-1) edge node [left] {$\scriptstyle{GF(m_{X,Y,G(N)})}$} (m-1-1)
    (m-2-1) edge node [above] {$\scriptstyle{\varepsilon_{(X \otimes_\C Y) \otimes_\m G(N)}}$} (m-2-2)
(m-2-2) edge node [right] {$\scriptstyle{m_{X,Y,G(N)}}$} (m-1-2);
\end{tikzpicture}
\end{equation}
\end{center}

\begin{center}
\small
\begin{equation}
\label{2}
\begin{tikzpicture}
  \matrix (m) [matrix of math nodes,row sep=3em,column sep=9em,minimum width=2em]
  {
    GF(X \otimes_\m GF(Y \otimes_\m G(N))) & X \otimes_\m GF(Y \otimes_\m G(N))\\
  GF(X \otimes_\m (Y \otimes_\m G(N)))&X \otimes_\m (Y \otimes_\m G(N)), \\ };
  \path[-stealth]
    (m-1-1) edge node [above] {$\scriptstyle{\varepsilon_{X \otimes_\m GF(Y \otimes_\m G(N))}}$} (m-1-2)
    (m-1-1) edge node [left] {$\scriptstyle{GF(\text{id}_X \otimes_\m \varepsilon_{Y \otimes_\m G(N)})}$} (m-2-1)
    (m-2-1) edge node [above] {$\scriptstyle{\varepsilon_{X \otimes_\m (Y \otimes_\m G(N))}}$} (m-2-2)
(m-1-2) edge node [right] {$\scriptstyle{\text{id}_X \otimes_\m \varepsilon_{Y \otimes_\m G(N)}}$} (m-2-2);
\end{tikzpicture}
\end{equation}
\end{center}

\begin{center}
\begin{equation}
\label{3}
\begin{tikzpicture}
  \matrix (m) [matrix of math nodes,row sep=3em,column sep=6em,minimum width=2em]
  {
    GF((Y \otimes_\m G(N)) \otimes^\m Z) & (Y \otimes_\m G(N)) \otimes^\m Z\\
  GF(Y \otimes_\m (G(N) \otimes^\m Z))&Y \otimes_\m (G(N) \otimes^\m Z), \\ };
  \path[-stealth]
    (m-1-1) edge node [above] {$\scriptstyle{\varepsilon_{(Y \otimes_\m G(N)) \otimes^\m Z}}$} (m-1-2)
    (m-1-1) edge node [left] {$\scriptstyle{GF(b_{Y,G(N),Z})}$} (m-2-1)
    (m-2-1) edge node [above] {$\scriptstyle{\varepsilon_{Y \otimes_\m (G(N) \otimes^\m Z)}}$} (m-2-2)
(m-1-2) edge node [right] {$\scriptstyle{b_{Y,G(N),Z}}$} (m-2-2);
\end{tikzpicture}
\end{equation}
\end{center}
\begin{center}
\begin{equation}
\label{4}
\begin{tikzpicture}[thick,scale=0.6, every node/.style={scale=0.9}]
  \matrix (m) [matrix of math nodes,row sep=3em,column sep=6em,minimum width=2em]
  {
    GF(GF(Y \otimes_\m G(N)) \otimes^\m Z) &GF(Y \otimes_\m G(N)) \otimes^\m Z\\
  GF((Y \otimes_\m G(N)) \otimes^\m Z)&(Y \otimes_\m G(N)) \otimes^\m Z,\\ };
  \path[-stealth]
    (m-1-1) edge node [above] {$\scriptstyle{\varepsilon_{GF(Y \otimes_\m G(N)) \otimes^\m Z}}$} (m-1-2)
    (m-1-1) edge node [left] {$\scriptstyle{GF(\varepsilon_{Y \otimes_\m G(N)} \otimes^\m \text{id}_Z)}$} (m-2-1)
    (m-2-1) edge node [above] {$\scriptstyle{\varepsilon_{(Y \otimes_\m G(N)) \otimes^\m Z}}$} (m-2-2)
(m-1-2) edge node [right] {$\scriptstyle{\varepsilon_{Y \otimes_\m G(N)} \otimes^\m \text{id}_Z}$} (m-2-2);
\end{tikzpicture}
\end{equation}
\end{center}
and
\begin{center}
\begin{equation}
\label{5}
\begin{tikzpicture}[thick,scale=0.6, every node/.style={scale=0.9}]
  \matrix (m) [matrix of math nodes,row sep=3em,column sep=6em,minimum width=2em]
  {
    GF(Y \otimes_\m (G(N) \otimes^\m Z)) &Y \otimes_\m (G(N) \otimes^\m Z)\\
  GF(Y \otimes_\m GF(G(N) \otimes^\m Z))&Y \otimes_\m GF(G(N) \otimes^\m Z)\\ };
  \path[-stealth]
    (m-1-1) edge node [above] {$\scriptstyle{\varepsilon_{Y \otimes_\m (G(N) \otimes^\m Z)}}$} (m-1-2)
    (m-1-1) edge node [left] {$\scriptstyle{GF(\text{id}_Y \otimes_\m \varepsilon^{-1}_{G(N) \otimes^\m Z})}$} (m-2-1)
    (m-2-1) edge node [above] {$\scriptstyle{\varepsilon_{Y \otimes_\m GF(G(N) \otimes^\m Z)}}$} (m-2-2)
(m-1-2) edge node [right] {$\scriptstyle{\text{id}_Y \otimes_\m \varepsilon^{-1}_{G(N) \otimes^\m Z}}$} (m-2-2);
\end{tikzpicture}
\end{equation}
\end{center}
commute for every $N \in \Nn$, $X,Y \in \C$ and $Z \in \D$.\\
Moreover, by the naturality of $m$ the following diagram:
\begin{center}
\begin{equation}
\label{6}
\begin{tikzpicture}[thick,scale=0.6, every node/.style={scale=0.9}]
  \matrix (m) [matrix of math nodes,row sep=3em,column sep=6em,minimum width=2em]
  {
    (X \otimes_\C Y) \otimes_\m GF(G(N) \otimes^\m Z) & X \otimes_\m (Y \otimes_\m GF(G(N) \otimes^\m Z)) \\
  (X \otimes_\C Y) \otimes_\m (G(N) \otimes^\m Z)& X \otimes_\m (Y \otimes_\m (G(N) \otimes^\m Z))\\ };
  \path[-stealth]
    (m-1-1) edge node [above] {$\scriptstyle{m_{X,Y, GF(G(N) \otimes^\m Z)}}$} (m-1-2)
    (m-1-1) edge node [left] {$\scriptstyle{\text{id}_{X \otimes_\C Y} \otimes_\m \varepsilon_{G(N) \otimes^\m Z}}$} (m-2-1)
    (m-2-1) edge node [above] {$\scriptstyle{m_{X,Y,G(N) \otimes^\m Z}}$} (m-2-2)
(m-1-2) edge node [right] {$\scriptstyle{\text{id}_ X \otimes_\m (\text{id}_Y \otimes_\m \varepsilon_{G(N) \otimes^\m Z})}$} (m-2-2);
\end{tikzpicture}
\end{equation}
\end{center}
commutes for every $X,Y \in \C, Z \in \D$ and $N \in \Nn$.\\
Furthermore, by naturality of $b$ the diagram
\begin{center}
\small
\begin{equation}
\label{7}
\begin{tikzpicture}[thick,scale=0.6, every node/.style={scale=0.85}]
  \matrix (m) [matrix of math nodes,row sep=3em,column sep=6em,minimum width=2em]
  {
    (X \otimes_\m GF(Y \otimes_\m G(N))) \otimes^\m Z & X \otimes_\m (GF(Y \otimes_\m G(N)) \otimes^\m Z) \\
  (X \otimes_\m (Y \otimes_\m G(N))) \otimes^\m Z& X \otimes_\m ((Y \otimes_\m G(N)) \otimes^\m Z)\\ };
  \path[-stealth]
    (m-1-1) edge node [above] {$\scriptstyle{b_{X, GF(Y \otimes_\m G(N)),Z}}$} (m-1-2)
    (m-1-1) edge node [left] {$\scriptstyle{(\text{id}_{X} \otimes_\m \varepsilon_{Y \otimes_\m G(N)}) \otimes^\m \text{id}_Z}$} (m-2-1)
    (m-2-1) edge node [above] {$\scriptstyle{b_{X, Y \otimes_\m G(N),Z}}$} (m-2-2)
(m-1-2) edge node [right] {$\scriptstyle{\text{id}_{X} \otimes_\m (\varepsilon_{Y \otimes_\m G(N)} \otimes^\m \text{id}_Z)}$} (m-2-2);
\end{tikzpicture}
\end{equation}
\end{center}
is commutative for every $X,Y \in \C, Z \in \D$ and $N \in \Nn$.\\
The commutativity of the above diagrams together with the fact that $\m$ is $(\C, \D)$-bimodule category implies that Equation \eqref{bimod} holds. Further details can be found in the Appendix \ref{appendix}.\\
The other diagram is completely analogous to prove. \\ \\
We now show that $(F, s, s^r)$ is a $(\C, \mathcal{D})$-bimodule functor. By Lemma \ref{C-invariants}  and Lemma \ref{right} the pairs $(F,s)$ and $(F,s^r)$ are respectively a left $\C$-module functor and a right $\mathcal{D}$-module functor. We are left to prove the commutativity of Diagram \eqref{functorbimodule}.  By the definition of $s$, $s^r$ and $p$ this is equivalent to verify that the following diagram commutes
\begin{center}
\small{
\begin{equation}
\label{bimodulefunctor2}
\begin{tikzpicture}
  \matrix (m) [matrix of math nodes,rotate=90,transform shape,row sep=4em,column sep=0.5em,minimum width=1em]
  {
  F((X \otimes_\m M) \otimes^\m Y)&\,& \, &F(X \otimes_\m (M  \otimes^\m Y)) \\
    F(GF(X \otimes_\m M) \otimes^\m Y) & \, & \, & F(X \otimes_\m  GF(M  \otimes^\m Y))\\
 F(GF(X \otimes_\m GF(M)) \otimes^\m Y) & \, & \, &  F(X \otimes_\m  GF(GF(M)  \otimes^\m Y))\\
F((X \otimes_\m GF(M)) \otimes^\m Y)& \, & \, &  F(X \otimes_\m  (GF(M)  \otimes^\m Y))\\
 };

  \path[-stealth]
    (m-1-1) edge node [above] {$\scriptstyle{F(b_{X, M, Y})}$} (m-1-4)
    (m-1-1) edge node [left] {$\scriptstyle{F(\varepsilon^{-1}_{X \otimes_\m M} \otimes^\m \text{id}_Y)}$} (m-2-1)
    (m-2-1) edge node [left] {$\scriptstyle{F(GF(\id_X \otimes_\m \varepsilon^{-1}_M) \otimes^\m \id_Y)}$} (m-3-1)
    (m-3-1) edge node [left] {$\scriptstyle{F(\varepsilon_{X \otimes_\m GF(M)} \otimes^\m \id_Y)}$} (m-4-1)
    (m-4-1) edge node [above] {$\scriptstyle{F(b_{X, GF(M), Y})}$} (m-4-4)
(m-1-4) edge node [right] {$\scriptstyle{F(\id_X \otimes_\m \varepsilon^{-1}_{M \otimes^\m Y})}$} (m-2-4)
(m-2-4) edge node [right] {$\scriptstyle{F(\id_X \otimes_\m GF(\varepsilon^{-1}_M \otimes^\m \id_Y))}$} (m-3-4)
(m-4-4) edge node [right] {$\scriptstyle{F(\id_X \otimes_\m \varepsilon^{-1}_{GF(M) \otimes^\m Y})}$} (m-3-4)
;
\end{tikzpicture}
\end{equation}}
\end{center}
for every $X \in \C$, $Y \in \mathcal{D}$ and $M \in \m$.\\
For this purpose, we will use the following diagrams
\begin{equation}
\label{nat1}
\begin{tikzpicture}[thick,scale=0.6, every node/.style={scale=0.9}]
  \matrix (m) [matrix of math nodes,row sep=3em,column sep=8em,minimum width=2em]
  {
    F((X \otimes_\m M)\otimes^\m Y) &F((X \otimes_\m GF(M))\otimes^\m Y)\\
  F(GF(X \otimes_\m M) \otimes^\m Y)& F(GF(X \otimes_\m GF(M)) \otimes^\m Y)\\ };
  \path[-stealth]
    (m-1-1) edge node [above] {$\scriptstyle{F((\id_X \otimes_\m \varepsilon^{-1}_M) \otimes^\m \id_Y})$} (m-1-2)
    (m-1-1) edge node [left] {$\scriptstyle{F(\varepsilon^{-1}_{X \otimes_\m M} \otimes^\m \id_Y)}$} (m-2-1)
    (m-2-1) edge node [above] {$\scriptstyle{F(GF(\id_X \otimes_\m \varepsilon^{-1}_M) \otimes^\m \id_Y)}$} (m-2-2)
(m-1-2) edge node [right] {$\scriptstyle{F(\varepsilon^{-1}_{X \otimes_\m GF(M)} \otimes^\m Y)}$} (m-2-2);
\end{tikzpicture}
\end{equation}
and
\small
\begin{equation}
\label{nat2}
\begin{tikzpicture}[thick,scale=0.6, every node/.style={scale=0.9}]
  \matrix (m) [matrix of math nodes,row sep=3em,column sep=7em,minimum width=2em]
  {
   F(X \otimes_\m (M \otimes^\m Y)) & F(X \otimes_\m GF(M \otimes^\m Y))\\
  F(X \otimes_\m (GF(M) \otimes^\m Y))& F(X \otimes_\m GF(GF(M) \otimes^\m Y))\\ };
  \path[-stealth]
    (m-1-1) edge node [above] {$\scriptstyle{F(\id_X \otimes_\m \varepsilon^{-1}_{M \otimes^\m Y})}$} (m-1-2)
    (m-1-1) edge node [left] {$\scriptstyle{F(\id_X \otimes_\m (\varepsilon^{-1}_M \otimes^\m \id_Y))}$} (m-2-1)
    (m-2-1) edge node [above] {$\scriptstyle{F(\id_X \otimes_\m \varepsilon^{-1}_{GF(M) \otimes^\m Y})}$} (m-2-2)
(m-1-2) edge node [right] {$\scriptstyle{F(\id_X \otimes_\m GF(\varepsilon^{-1}_M \otimes^\m \id_Y))}$} (m-2-2);
\end{tikzpicture}
\end{equation}
which commute for every $X \in \C$, $M \in \m$ and $Y \in \mathcal{D}$ in virtue of the naturality of $\varepsilon$. Thus diagram \eqref{bimodulefunctor2} reads as
\begin{equation*}
\begin{tikzpicture}[thick,scale=0.6, every node/.style={scale=0.9}]
  \matrix (m) [matrix of math nodes,row sep=3em,column sep=6em,minimum width=2em]
  {
    F((X \otimes_\m M) \otimes^\m Y) & F(X \otimes_\m (M \otimes^\m Y))\\
 F((X \otimes_\m GF(M)) \otimes^\m Y)& F(X \otimes_\m (GF(M) \otimes^\m Y))\\ };
  \path[-stealth]
    (m-1-1) edge node [above] {$\scriptstyle{F(b_{X, M, Y})}$} (m-1-2)
    (m-1-1) edge node [left] {$\scriptstyle{F((\id_X \otimes_\m \varepsilon_M^{-1}) \otimes^\m \id_Y)}$} (m-2-1)
    (m-2-1) edge node [above] {$\scriptstyle{F(b_{X, GF(M), Y})}$} (m-2-2)
(m-1-2) edge node [right] {$\scriptstyle{F(\id_X \otimes_\m (\varepsilon^{-1}_M \otimes^\m \id_Y))}$} (m-2-2);
\end{tikzpicture}
\end{equation*}
for every $X \in \C$, $M \in \m$ and $Y \in \mathcal{D}$ and it is commutative by the naturality of $b$.\\
The proof for $(G,t,t^r)$ is analogous.
\end{proof}
\normalsize
\subsection{Finite $W$-algebras}
\label{preliminW}
In this subsection we recall the construction of a finite-dimensional $W$-algebra. Moreover, we state an important equivalence between categories of modules due to Skryabin.\\ \\
Let $\g$ be a finite-dimensional reductive Lie algebra over $\mathbb{C}$ and let $e \in \g$ be nilpotent. 
A $\Z$-grading
$$
\g=\bigoplus_{j \in \Z}\g(j)
$$
of $\g$ is called a \emph{good grading} for $e$ if
\begin{enumerate}
\item $e \in \g(2)$;
\item $\g^e \subseteq \bigoplus_{j \geq 0}\g(j)$, where $\g^e$ stands for the centralizer of $e$ in $\g$;
\item $\mathfrak{z}(\g) \subseteq \g(0)$, where $\mathfrak{z}(\g)$ denotes the centre of $\g$.
\end{enumerate}
Let $e$ be a nilpotent element in $\g$. By Jacobson-Morozov Theorem, there is  an $\mathfrak{sl}_2$-triple $(e,h,f)$, associated to $e$. \\
The standard good grading is the one induced by the adjoint action of $h$, called the Dynkin grading, that is
$$\g(i)=\{ x \in \g \, | \, [h,x]=ix \}.$$
\begin{remark}
\label{semisimplegrading}
Every grading of $\g$ satisfying $\mathfrak{z}(\g) \subseteq \g(0)$ is induced by the adjoint action of a semisimple element (\cite[Proposition 20.1.5]{TY}).
\end{remark}
Throughout this subsection, we fix a $\mathbb{Z}$-good grading. \\
Consider $(\cdot|\cdot)$ a non-degenerate symmetric invariant bilinear form on $\g$. Define $\chi \in \g^*$ in the following way:
$$\chi \colon \g \to \mathbb{C},  \qquad x \mapsto (e|x).$$  

By definition of good grading, $e \in \g(2)$ and $\chi(x)=(e|x)=0$ for every $x \in \g(j)$, unless $j =-2$.\\
Let $\langle \cdot | \cdot \rangle$  be the non-degenerate alternating bilinear form on $\g(-1)$ defined by
$$\langle x|y \rangle:=\chi([y,x]).
$$
Fix $\ell$ an isotropic subspace of $\g(-1)$ with respect to $\langle \cdot | \cdot \rangle $. \\Let $\ell^{\perp}=\{ x \in \g(-1) \, |\, \langle x, y \rangle =0 \, \, \text{for all } \, y \in \ell \}$, so $\ell \subseteq \ell^{\perp}$.
We define the following subalgebras:
$$\mathfrak{m}_\ell = \ell \oplus \bigoplus_{j < -1} \g(j), \qquad  \mathfrak{n}_\ell = \ell^{\perp} \oplus \bigoplus_{j < -1} \g(j),
$$
so $\mathfrak{m}_\ell \subseteq \mathfrak{n}_\ell$. The algebras $\mathfrak{n}_\ell$ and $\mathfrak{m}_\ell$ are nilpotent, because they are subalgebras of the nilpotent algebra $\bigoplus_{j<0}\g(j)$. Moreover, $\mathfrak{m}_\ell $ is a Lie ideal of $\mathfrak{n}_\ell$.\\\\
Notice that the map $\chi$ restricts to a character of $\mathfrak{m}_\ell$. 
Thus, we can define $$Q_\ell:= U(\g) \otimes_{U(\mathfrak{m}_\ell)}\mathbb{C}_\chi,$$
where $\mathbb{C}_\chi$ is the 1-dimensional left $U(\mathfrak{m}_\ell)$-module obtained from the character $\chi$. In particular, $Q_\ell \simeq U(\g)/I_\ell$, where $I_\ell$ is the left ideal generated by $x-\chi(x)$ for every $x \in \mathfrak{m}_\ell$. The left multiplication in $U(\g)$ induces an action on $Q_\ell$. Specifically,  $$x.(y+I_\ell)=xy+I_\ell,$$ for every $x, y \in U(\g)$. In addition, there is an induced ad$(\mathfrak{n}_\ell)$-action on $Q_\ell$, since the ideal $I_\ell$ is stable under the action of $\mathfrak{n}_\ell$.  \\
Hence, it makes sense to define $$H_\ell:=Q_\ell^{\text{ad} \, \mathfrak{n}_\ell},$$ that is the subspace of all $x+ I_\ell$, with $x \in U(\g)$ such that 
\begin{equation*}
  \tag{$\clubsuit$}
yx-xy \in I_\ell \qquad \text{for all} \qquad y \in \mathfrak{n}_\ell.
  \label{walgebra}
\end{equation*}
We can define an algebra structure on $H_\ell$ via $$(x+I_\ell)(y+ I_\ell)=xy + I_\ell,$$ for $x+I_\ell, y+ I_\ell \in H_\ell$ and the multiplication is well defined (see \cite[Section 1]{GG}). \\
The algebra $H_\ell$ is called the \emph{finite W-algebra} associated with $e$.
\begin{remark}
We stress the fact that the finite $W$-algebra $H_\ell$ does not depend on the choice of the good grading (\cite[Theorem 1]{BG}) and neither from the Lagrangian (\cite[Theorem 4.1]{GG}), but it depends only on the adjoint orbit of $e$.
\end{remark}
\begin{remark}
\label{Qbimodule}
Observe that $Q_\ell$ is a $U(\g)$-$H_\ell$ -bimodule, where $U(\g)$ acts on $Q_\ell$ by left multiplication and the right action of $H_\ell$ on $Q_\ell$ is induced by right multiplication. 
\end{remark}
In the following we require that $\ell$ is a Lagrangian subspace of $\g(-1)$,  that is $\ell=\ell^{\perp}$ so that $\mathfrak{n}_\ell=\mathfrak{m}_\ell$.\\
Let $e$, $\mathfrak{n}_\ell=\mathfrak{m}_\ell$ and $\chi$ be as above.\\ \\
From now on $\m$ will denote the category of $U(\g)$-modules on which $x-\chi(x)$ acts locally nilpotently for each $x \in \mathfrak{m}_\ell$. It is called the category of Whittaker modules.\\Also, from now on $\Nn$ will denote the category of finitely-generated $H_\ell$-modules.\\
When the good grading for $\g$ is the Dynkin one, an equivalence between $\m$ and $\Nn$ was described by Skryabin in \cite[Appendix, Theorem 1]{P}. Moreover, under the same assumption, Gan and Ginzburg gave an alternative proof of Skryabin equivalence in \cite[Theorem 6.1]{GG}.\\
Goodwin in \cite[Theorem 3.14]{G} showed that the same equivalence holds also when the grading of $\g$ is a general good grading. Before exhibiting this equivalence, we put for $M \in \m$
$$ \text{Wh}(M):=\{ v \in M \, | \, x.v=\chi(x)v \, \, \, \text{for all}\, x \in \mathfrak{m}_\ell\}.$$
\begin{remark}
For $M \in \m$, the subspace $\text{Wh}(M)$ is an $H_\ell$-module via the action:
$$
(x+I_\ell). m=x.m,
$$
for every  $m \in \text{Wh}(M)$, $x \in U(\g)$ satisfying \eqref{walgebra}. 
\end{remark}
Then, with the usual restriction of morphisms, $\text{Wh}$ defines a functor from the category of Whittaker modules $\m$ to the category of finitely generated $H_\ell$ -modules.\\
By Remark \ref{Qbimodule}, we also have a functor 
$$ Q_\ell \otimes_{H_\ell}- \colon \Nn \to U(\g)\text{-mod},$$
and in particular $Q_\ell \otimes_{H_\ell} N$ with $U(\g)$-action by left multiplication is a Whittaker module for every $N \in \Nn$. \\
We are now in a position to state the following theorem. 
\begin{theorem}\cite[Theorem 3.14]{G}
\label{equi}
Let $M \in \m$ and $N \in \Nn$.
The functors $M \mapsto\text{Wh}(M)$ and $N \mapsto Q_\ell \otimes_{H_\ell}N$ are quasi inverse equivalences between the category of Whittaker modules $\m$ and the category of finitely-generated $H_\ell$-modules $\Nn$.
\end{theorem}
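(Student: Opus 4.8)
The plan is to realise the pair $\bigl(Q_\ell \otimes_{H_\ell} -,\, \text{Wh}\bigr)$ as an adjunction and to show that its unit and counit are isomorphisms on the two categories at hand; this is the route of Skryabin and of Gan--Ginzburg in the Dynkin case, and, following Goodwin, I would carry it out for an arbitrary good grading, using Remark~\ref{semisimplegrading} to replace the Dynkin torus by the semisimple element inducing the grading, and working throughout with the Lagrangian $\ell$, so that $\mathfrak{n}_\ell = \mathfrak{m}_\ell$.

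First I would isolate the purely formal part. By Frobenius reciprocity $\text{Wh}(M) = \text{Hom}_{U(\mathfrak{m}_\ell)}(\mathbb{C}_\chi, M) \cong \text{Hom}_{U(\g)}(Q_\ell, M)$, and through the right $H_\ell$-action on $Q_\ell$ (Remark~\ref{Qbimodule}) this is functorially a left $H_\ell$-module; the tensor--Hom adjunction for the bimodule $Q_\ell$ then makes $Q_\ell \otimes_{H_\ell} -$ a left adjoint of $\text{Wh}$, with counit $\varepsilon_M \colon Q_\ell \otimes_{H_\ell} \text{Wh}(M) \to M$, $q \otimes \phi \mapsto \phi(q)$, and unit $\eta_N \colon N \to \text{Wh}(Q_\ell \otimes_{H_\ell} N)$, $n \mapsto (q \mapsto q \otimes n)$. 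Under the identification $\text{Wh}(M) \cong \text{Hom}_{U(\g)}(Q_\ell, M)$ the counit is just the action map $(u + I_\ell) \otimes v \mapsto u.v$, so it says that the map $Q_\ell \otimes_{H_\ell}\text{Wh}(M)\to M$ is bijective, i.e.\ that $M$ is generated by its Whittaker vectors; the unit is $n \mapsto (1 + I_\ell) \otimes n$. Since the two functors are already known (as recalled before the statement) to send $\m$ into $\Nn$ and $\Nn$ into $\m$, it remains to prove that $\eta_N$ is an isomorphism for every $N \in \Nn$ and $\varepsilon_M$ is an isomorphism for every $M \in \m$.

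For the unit, I would first record $\text{Wh}(Q_\ell) = Q_\ell^{\text{ad}\,\mathfrak{m}_\ell} = H_\ell$: for $y \in \mathfrak{m}_\ell$ one has $y.(x + I_\ell) = \bigl([y,x] + \chi(y)x\bigr) + I_\ell$ (using $x\bigl(y - \chi(y)\bigr) \in I_\ell$, as $I_\ell$ is a left ideal), so $x + I_\ell$ is a Whittaker vector exactly when $[y,x] \in I_\ell$ for all $y \in \mathfrak{m}_\ell$, which is condition~\eqref{walgebra}; under this identification $\eta_{H_\ell}$ is the identity map. Hence $\eta_N$ is an isomorphism for finite free $N$, since $\text{Wh}$ commutes with finite direct sums, and for an arbitrary finitely generated $N$ one takes a finite presentation $H_\ell^{\,b} \to H_\ell^{\,a} \to N \to 0$ — legitimate because $H_\ell$ is Noetherian, its Kazhdan associated graded being $\mathbb{C}[\mathscr{S}_e]$ — applies the right exact functor $Q_\ell \otimes_{H_\ell} -$ and then $\text{Wh}$, and concludes by the five lemma. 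This uses that $\text{Wh}$ is exact on $\m$, equivalently the vanishing of the higher $\mathfrak{m}_\ell$-cohomology of Whittaker modules; that vanishing is a genuine, non-formal input, which one obtains (as in Skryabin and Gan--Ginzburg) from the Kazhdan filtration, on the level of which the $\mathfrak{m}_\ell$-action becomes free translations transverse to $\mathscr{S}_e$, so that $\text{Wh}$ becomes an exact restriction to a linear slice.

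The counit $\varepsilon_M$ is the heart of the matter and the step I expect to be the main obstacle. Since every object of $\m$ is a directed union of finitely generated Whittaker submodules and both $Q_\ell \otimes_{H_\ell} -$ and $\text{Wh}$ commute with such unions, I would first reduce to $M$ finitely generated over $U(\g)$ and equip it with a good filtration compatible with the Kazhdan filtrations on $U(\g)$ and on $Q_\ell$. On associated graded objects $\text{gr}\,U(\g) = \mathbb{C}[\g^*]$, $\text{gr}\,Q_\ell$ is the coordinate ring of the affine subspace $\chi + \mathfrak{m}_\ell^{\perp} \subseteq \g^*$ cut out by the Whittaker condition, $\text{gr}\,H_\ell = \mathbb{C}[\mathscr{S}_e]$, and $\text{gr}\,M$ is a finitely generated $\text{gr}\,Q_\ell$-module supported on $\chi + \mathfrak{m}_\ell^{\perp}$. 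The decisive geometric input is Kostant's transversality theorem — in the form due to Gan--Ginzburg for a general nilpotent element and to Goodwin for a general good grading — which says that the action map from the product of the unipotent group with Lie algebra $\mathfrak{m}_\ell$ and $\mathscr{S}_e$ to $\chi + \mathfrak{m}_\ell^{\perp}$ is an isomorphism of affine varieties. It exhibits $\text{gr}\,Q_\ell$ as a free $\text{gr}\,H_\ell$-module and, applied to $\text{gr}\,M$, identifies $\text{gr}\bigl(Q_\ell \otimes_{H_\ell} \text{Wh}(M)\bigr)$ with $\text{gr}\,M$ compatibly with $\text{gr}(\varepsilon_M)$, whence $\varepsilon_M$ is an isomorphism; as a by-product $Q_\ell$ is free as a right $H_\ell$-module. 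The delicate points here — and the places where the general-good-grading hypothesis genuinely enters — are the compatibility of $\text{gr}$ with $\text{Wh}$ and with $Q_\ell \otimes_{H_\ell} -$ along good filtrations, and the exhaustiveness of the induced filtration on $\text{Wh}(M)$. Once $\varepsilon$ and $\eta$ are isomorphisms, the adjunction makes the two functors mutually quasi-inverse equivalences, as asserted.
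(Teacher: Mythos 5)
The paper itself gives no proof of Theorem \ref{equi}: the statement is imported verbatim from Goodwin \cite[Theorem 3.14]{G} (which in turn extends Skryabin's appendix to \cite{P} and Gan--Ginzburg \cite[Theorem 6.1]{GG} from the Dynkin grading to an arbitrary good grading), and the paper only records the counit $\varepsilon$ in Remark \ref{epsilonremark} for later use. So there is no in-paper argument to compare with; what can be said is that your sketch is a faithful reconstruction of the proof in the cited literature. The formal layer is correct as you state it: $\text{Wh}(M)\cong \operatorname{Hom}_{U(\g)}(Q_\ell,M)$ by Frobenius reciprocity, the tensor--Hom adjunction for the bimodule $Q_\ell$ with unit $n\mapsto (1+I_\ell)\otimes n$ and counit $(u+I_\ell)\otimes m\mapsto u.m$ (exactly the $\varepsilon$ of Remark \ref{epsilonremark}), the computation $\text{Wh}(Q_\ell)=H_\ell$ via condition \eqref{walgebra}, and the five-lemma reduction of the unit to free modules using that $H_\ell$ is Noetherian. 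You have also located the genuinely non-formal inputs in the right places: exactness of $\text{Wh}$ on $\m$ (vanishing of higher $\mathfrak{m}_\ell$-Whittaker cohomology), freeness of $Q_\ell$ as a right $H_\ell$-module, and the transversality isomorphism between the unipotent group times the Slodowy slice and $\chi+\mathfrak{m}_\ell^{\perp}$, all controlled through the Kazhdan filtration; these are precisely the ingredients of \cite{GG} and of Goodwin's extension to general good gradings, and your proposal defers them to that literature rather than reproving them --- which is, in substance, what the paper does by citing \cite{G}. The only caveat is that the deferred points (compatibility of $\operatorname{gr}$ with $\text{Wh}$ and with $Q_\ell\otimes_{H_\ell}-$, exhaustiveness of the induced filtration on $\text{Wh}(M)$) are where all the real work lies, so your text should be read as a correct outline of the known proof rather than a self-contained one.
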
 
\begin{remark}
\label{epsilonremark}
The natural isomorphism $\varepsilon \colon Q_\ell \otimes_{H_\ell} \text{Wh} \to \text{id}$ is given by
\begin{equation}
\begin{split}
\label{epsilon}
\varepsilon_M \colon Q_\ell \otimes_{H_\ell} \text{Wh}(M) &\to M\\
(u+I_\ell) \otimes m &\mapsto u.m
\end{split}
\end{equation}
for $M \in \m$ (\cite[Theorem 3.14]{G}).
\end{remark}
\subsection{Finite $W$-algebra by stages} \label{preliminariestage}
This subsection aims at exhibiting the  Skryabin equivalence by stages, i.e. the functors $\text{Wh}_0$ and $Q_0 \otimes_{H_0}$ introduced in \cite{GJ}.\\ \par
Let $\g$ be a finite-dimensional simple Lie algebra over $\mathbb{C}$.\\
For $i=1,2$, let $e_i \in \g$ be nilpotent elements and let $\chi_i$ be the associated linear forms constructed as in Subsection \ref{preliminW}. For $i=1,2$, we denote by  $(e_i,f_i,h_i)$ an $\mathfrak{sl}_2$-triple in which $e_i$ is embedded.\\
We adopt notations from Subsection \ref{preliminW} adding a subscript $\,_{(i)}$ for referring to the construction related to $e_i$.\\
For $i=1,2$, let $\g= \bigoplus_{j \in \Z} \g(j)_{(i)}$ be a good grading for $e_i$. By Remark \ref{semisimplegrading} the good grading $_{(i)}$ is induced by the adjoint action of a semisimple element $q_i$. Without loss of generality, we can assume that $q_1$ and $q_2$ belong to the same Cartan algebra $\mathfrak{h}$. \\
Throughout this subsection, we will make the following assumptions:
\begin{enumerate}
\label{assumption}
\item there is a direct sum decomposition $\mathfrak{m}_{\ell_{(2)}}=\mathfrak{m}_{\ell_{(1)}} \oplus \mathfrak{m}_0$, where $\mathfrak{m}_0$ is a $\mathfrak{h}$-stable Lie subalgebra of $\mathfrak{m}_{\ell_{(2)}}$ and $\mathfrak{m}_{\ell_{(1)}}$  is a Lie ideal of $\mathfrak{m}_{\ell_{(2)}}$,
\item the element $e_0:= e_2-e_1$ is nilpotent and  $e_0$ and the Lie algebra $\mathfrak{m}_0$ are contained in $\g(0)_{(1)}$ ,
\item the semisimple element $q_0:=q_2-q_1$ commutes with $e_1$.
\end{enumerate}
For simplicity, we denote the above assumptions with ($\diamondsuit$).\\
We stress that throughout this subsection the assumptions ($\diamondsuit$) are satisfied.\\\\
For introducing the functors $\text{Wh}_0$ and $Q_0 \otimes_{H_0}-$, we first recollect some results contained in \cite{GJ}. \\
By \cite[Lemma 3.2.1]{GJ}, there is an embedding of $\mathfrak{m}_0$ in $H_{\ell_{(1)}}$, which sends $y \in \mathfrak{m}_0$ to $y+I_{\ell_{(1)}}$. In consequence, we can define $I_0$ as the left $H_{\ell_{(1)}}$-ideal spanned by $(y-\chi_2(y))+I_{\ell_{(1)}}$, $y \in \mathfrak{m}_0$.\\ Let  $Q_0:=H_{\ell_{(1)}}/I_0$. The adjoint action of $\mathfrak{m}_0$ on $H_{\ell_{(1)}}$ descends to $Q_0$. Then, it makes sense to consider the subspace of invariants $H_0:=Q_0^{\text{ad}(\mathfrak{m}_0)}$.
This vector space turns out to be an algebra (see \cite[Lemma 3.2.2]{GJ}), where the multiplication is induced by the one in $H_{\ell_{(1)}}$.\\
Moreover, in virtue of \cite[Theorem 3.2.3]{GJ}, $H_0$ is isomorphic to $H_{\ell_{(2)}}$. \\ \\

For $i=1,2$, let $\C_i$ be the category of $U(\g)$-modules on which $\mathfrak{m}_{\ell_{(i)}}$ acts locally nilpotently. The category $\C_2$ is a subcategory of $\C_1$ because by assumptions $\mathfrak{m}_{\ell_{(1)}}\subseteq \mathfrak{m}_{\ell_{(2)}}$.\\
For $i=1,2$, we denote with $\m_i$ the category of Whittaker modules with respect to $e_i$; with $\Nn_i$ the category of $H_{\ell_{(i)}}$-modules; with $\trans_{(i)}$ the translation functor and with  $\text{Wh}_i$ the functor $\text{Wh}$ from Subsection \ref{preliminW} defined on $\m_i$.

Finally, by the embedding of $\mathfrak{m}_0$ in $H_{\ell_{(1)}}$, we can define the category $\m_0$, that is the category of finitely generated $H_{\ell_{(1)}}$-modules on which $y-\chi_2(y)$ acts locally nilpotently, for all $y \in \mathfrak{m}_0$. For $M \in \m_0$, we set
$$ \text{Wh}_0(M):=\{ m \in M \, |\, y.m=\chi_2(y)m \, \, \text{for all} \, \, y \in \mathfrak{m}_0\}.$$
By \cite[Lemma 5.2.1]{GJ}, for every $M \in \m_0$, the subspace $\text{Wh}_0(M)$ is a left $H_0$-module with action given by
$$(x+I_0).m=x.m, $$
for every $x+I_0 \in H_0$ and $m \in \text{Wh}_0(M)$.\\ As we recalled before, $H_0 \simeq H_{\ell_{(2)}}$ and hence $\text{Wh}_0(M)$ is a left $H_{\ell_{(2)}}$-module, for every $M \in \m_0$. This implies that, together with restriction on morphisms, $\text{Wh}_0$ gives a functor from the category $\m_0$ to the category $\Nn_2$.\\
In virtue of \cite[Lemma 5.2.2]{GJ}, we have that $Q_0$ is a $H_{\ell_{(1)}}$-$H_0$-bimodule, where $H_{\ell_{(1)}}$ acts on $Q_0$ via left multiplication, while $H_0$ acts on $Q_0$ via right multiplication. In consequence, for $N \in \Nn_2$ the $H_{\ell_{(1)}}$-module $Q_0 \otimes_{H_0} N$ is well defined. The action is given by:
$$(x).((y+I_0)\otimes_{H_0} n)=(xy+I_0) \otimes_{H_0}n, $$
for every $n \in N$ and $x,y \in H_{\ell_{(1)}}$. Furthermore, thanks to \cite[Lemma 5.2.2]{GJ},  the tensor product $Q_0 \otimes_{H_0} N$ lies in $\m_0$ for all $N \in \Nn_2$.

We are now in a position to state the following.
\begin{theorem}\cite[Theorem 5.2.3]{GJ}
\label{Skryabin_0}
Let $M \in \m_0$,  $N \in \Nn_2$. Then the functors $M \mapsto \text{Wh}_0(M)$ and $N \mapsto Q_0 \otimes_{H_0} N$ are quasi inverse equivalences.\\
Moreover, $\text{Wh}_1(-)$ and $Q_{\ell_{(1)}} \otimes_{H_{\ell_{(1)}}}(-)$ induce an equivalence of categories $\m_1 \simeq \m_0$ by restriction and we have $\text{Wh}_2= \text{Wh}_0 \circ \text{Wh}_1$. In particular, $\text{Wh}_0$  and $Q_0 \otimes_{H_0}$ are exact functors.
\end{theorem}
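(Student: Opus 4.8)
The plan is to obtain the whole statement from the $e_1$-version of Skryabin's equivalence (Theorem~\ref{equi} applied to $e_1$), by showing it restricts to the relevant subcategories and then composing. Concretely, I would establish two things: (I) the functors $\text{Wh}_1\colon\m_1\to\Nn_1$ and $Q_{\ell_{(1)}}\otimes_{H_{\ell_{(1)}}}-\colon\Nn_1\to\m_1$ carry $\m_2$ into $\m_0$ and $\m_0$ into $\m_2$, hence restrict to quasi-inverse equivalences $\m_2\simeq\m_0$; and (II) the identity $\text{Wh}_2=\text{Wh}_0\circ\text{Wh}_1$ holds as functors $\m_2\to\Nn_2$. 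Granting these, the first assertion follows formally, since $\text{Wh}_0\cong\text{Wh}_2\circ(Q_{\ell_{(1)}}\otimes_{H_{\ell_{(1)}}}-)$ is then a composite of equivalences $\m_0\to\Nn_2$; and exactness of $\text{Wh}_0$ and of its quasi-inverse is automatic, an equivalence of abelian categories being exact (a posteriori $Q_0$ is flat over $H_0$).

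For (I), one direction is easy. First record the compatibility $\chi_1=\chi_2$ on $\mathfrak m_{\ell_{(1)}}$, which guarantees $\m_2\subseteq\m_1$: it follows from $e_0\in\g(0)_{(1)}$ and the invariance of $(\cdot\,|\,\cdot)$ with respect to the $q_1$-grading, since $\mathfrak m_{\ell_{(1)}}\subseteq\bigoplus_{j\le-1}\g(j)_{(1)}$ pairs nontrivially with $e_0$ only against $\g(0)_{(1)}$. Now by the remark preceding Theorem~\ref{equi} the action of $y+I_{\ell_{(1)}}$ on $\text{Wh}_1(M)$ is the restriction of the $U(\g)$-action of $y$; hence if $M\in\m_2$, then $y-\chi_2(y)$ acts locally nilpotently on $M$ for $y\in\mathfrak m_0$, therefore also on the submodule $\text{Wh}_1(M)$, so $\text{Wh}_1(M)\in\m_0$. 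The other direction is the crux: for $N\in\m_0$ one must show $Q_{\ell_{(1)}}\otimes_{H_{\ell_{(1)}}}N\in\m_2$, i.e. that $y-\chi_2(y)$ acts locally nilpotently on it for $y\in\mathfrak m_0$. Since $Q_{\ell_{(1)}}\otimes_{H_{\ell_{(1)}}}N$ is generated over $U(\g)$ by the image of $N$, on which the two actions of $y\in\mathfrak m_0$ (as an element of $\g$, and as an element of $H_{\ell_{(1)}}$) coincide, one reduces to a filtration argument: using the PBW filtration together with the facts supplied by ($\diamondsuit$) that $\mathfrak m_{\ell_{(1)}}$ is an $\text{ad}(\mathfrak m_0)$-stable ideal in $\mathfrak m_{\ell_{(2)}}$ and that the elements of $\mathfrak m_0\subseteq\mathfrak m_{\ell_{(2)}}$ sit in negative $q_2$-degrees, so $\text{ad}(\mathfrak m_0)$ is locally nilpotent on $U(\g)$, one sees that on the associated graded $y-\chi_2(y)$ acts by a locally nilpotent twisted derivation; an Engel-type argument over $\mathfrak m_0$ then yields local nilpotency on $Q_{\ell_{(1)}}\otimes_{H_{\ell_{(1)}}}N$. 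Combined with Theorem~\ref{equi} for $e_1$ this gives $\m_2\simeq\m_0$.

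For (II), compare $\text{Wh}_0(\text{Wh}_1(M))=\{m\in\text{Wh}_1(M):y.m=\chi_2(y)m\ \forall y\in\mathfrak m_0\}$ with $\text{Wh}_2(M)=\{m\in M:x.m=\chi_2(x)m\ \forall x\in\mathfrak m_{\ell_{(2)}}\}$; these subspaces of $M$ coincide because $\mathfrak m_{\ell_{(2)}}=\mathfrak m_{\ell_{(1)}}\oplus\mathfrak m_0$ and $\chi_2|_{\mathfrak m_{\ell_{(1)}}}=\chi_1$, and one checks that the isomorphism $H_0\simeq H_{\ell_{(2)}}$ of \cite[Theorem~3.2.3]{GJ} intertwines the two resulting $H_{\ell_{(2)}}$-module structures. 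To see, finally, that the quasi-inverse of $\text{Wh}_0$ is genuinely $Q_0\otimes_{H_0}-$ rather than some abstract quasi-inverse, I would verify that the natural maps $Q_0\otimes_{H_0}\text{Wh}_0(M)\to M$, $(u+I_0)\otimes m\mapsto u.m$, and $N\to\text{Wh}_0(Q_0\otimes_{H_0}N)$ are isomorphisms; applying the equivalence $Q_{\ell_{(1)}}\otimes_{H_{\ell_{(1)}}}-$ (which reflects isomorphisms) and using the identification $Q_{\ell_{(1)}}\otimes_{H_{\ell_{(1)}}}Q_0\cong Q_{\ell_{(2)}}$ of $(U(\g),H_{\ell_{(2)}})$-bimodules (immediate from the presentations of $I_{\ell_{(1)}}$, $I_0$, $I_{\ell_{(2)}}$ and $\chi_1=\chi_2$ on $\mathfrak m_{\ell_{(1)}}$), one reduces these to the counit and unit of the $e_2$-Skryabin equivalence, which are isomorphisms by Theorem~\ref{equi}.

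The step I expect to be the genuine obstacle is the hard direction of (I): proving that $Q_{\ell_{(1)}}\otimes_{H_{\ell_{(1)}}}N$ lies in $\m_2$ for $N\in\m_0$, equivalently that the $e_1$-Skryabin equivalence restricts exactly to $\m_2\simeq\m_0$. Everything afterwards is bookkeeping with the identifications $\mathfrak m_{\ell_{(2)}}=\mathfrak m_{\ell_{(1)}}\oplus\mathfrak m_0$, $\chi_1=\chi_2$ on $\mathfrak m_{\ell_{(1)}}$, and $H_0\simeq H_{\ell_{(2)}}$, whereas it is at this point that the assumptions~($\diamondsuit$) — in particular that $e_0$ and $\mathfrak m_0$ lie in $\g(0)_{(1)}$ and that $q_0$ commutes with $e_1$ — have to be used in an essential way, through the filtration/Engel argument controlling the $\text{ad}(\mathfrak m_0)$-action.
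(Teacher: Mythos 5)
The paper does not actually prove this statement: it is imported verbatim from \cite[Theorem 5.2.3]{GJ}, so there is no internal argument to compare yours against. Your plan does run parallel to the strategy of the cited source — restrict the $e_1$-Skryabin equivalence to an equivalence $\m_2\simeq\m_0$, prove $\text{Wh}_2=\text{Wh}_0\circ\text{Wh}_1$, and pin down the quasi-inverse via $Q_{\ell_{(1)}}\otimes_{H_{\ell_{(1)}}}Q_0\simeq Q_{\ell_{(2)}}$ — and you rightly read ``$\m_1\simeq\m_0$'' in the statement as a misprint for $\m_2\simeq\m_0$, which is what Section \ref{chapter4} actually uses. The easy direction of your step (I), the subspace equality in (II), and the formal derivation of the first assertion from (I)--(II) are fine.

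There is, however, a genuine gap precisely at the step you single out as the crux, and it is not only that the filtration argument is left vague. You reduce ``$Q_{\ell_{(1)}}\otimes_{H_{\ell_{(1)}}}N\in\m_2$'' to local nilpotency of $y-\chi_2(y)$ for $y\in\mathfrak{m}_0$ alone; but membership in $\m_2$ requires this for \emph{every} $z\in\mathfrak{m}_{\ell_{(2)}}=\mathfrak{m}_{\ell_{(1)}}\oplus\mathfrak{m}_0$, and local nilpotency of two non-commuting operators does not pass to their sum in general (e.g.\ $x\partial_y$ and $y\partial_x$ on $\mathbb{C}[x,y]$). So after handling $\mathfrak{m}_{\ell_{(1)}}$ (via $\m_1$-membership and $\chi_1=\chi_2$ on $\mathfrak{m}_{\ell_{(1)}}$) and $\mathfrak{m}_0$ separately, you still must glue: either show the $\mathfrak{m}_{\ell_{(2)}}$-action is locally finite and use the generalized weight decomposition for the nilpotent Lie algebra $\mathfrak{m}_{\ell_{(2)}}$ (any weight occurring agrees with $\chi_2$ on a spanning set, hence equals $\chi_2$), or prove the stronger statement that each vector is killed by a power of the left ideal of $U(\mathfrak{m}_{\ell_{(2)}})$ generated by the $z-\chi_2(z)$. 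As for the $\mathfrak{m}_0$-part itself, the ``Engel-type'' appeal can be replaced by the exact analogue of Equation \eqref{formaz}: since $y+I_{\ell_{(1)}}\in H_{\ell_{(1)}}$ and $I_{\ell_{(1)}}$ is $\operatorname{ad}(\mathfrak{m}_0)$-stable (because $\mathfrak{m}_{\ell_{(1)}}$ is an ideal of $\mathfrak{m}_{\ell_{(2)}}$, $\chi_2$ kills $[\mathfrak{m}_{\ell_{(2)}},\mathfrak{m}_{\ell_{(2)}}]$, and $\chi_1=\chi_2$ on $\mathfrak{m}_{\ell_{(1)}}$), one has
\begin{equation*}
(y-\chi_2(y))^k.\bigl((u+I_{\ell_{(1)}})\otimes n\bigr)=\sum_{j=0}^{k}\binom{k}{j}\,\bigl(\operatorname{ad}(y)^j(u)+I_{\ell_{(1)}}\bigr)\otimes\bigl((y+I_{\ell_{(1)}})-\chi_2(y)\bigr)^{k-j}.n,
\end{equation*}
and the claim follows because $\operatorname{ad}(y)$ is locally nilpotent on $U(\g)$ ($y$ lies in negative $q_2$-degrees) and $N\in\m_0$; note that $(\diamondsuit)(3)$ is not what powers this computation — it enters through the results you quote as black boxes, namely the embedding $\mathfrak{m}_0\hookrightarrow H_{\ell_{(1)}}$ and $H_0\simeq H_{\ell_{(2)}}$. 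Finally, the two points you leave as ``one checks'' — that $H_0\simeq H_{\ell_{(2)}}$ intertwines the module structures implicit in $\text{Wh}_2=\text{Wh}_0\circ\text{Wh}_1$, and that $Q_{\ell_{(1)}}\otimes_{H_{\ell_{(1)}}}Q_0\simeq Q_{\ell_{(2)}}$ as $(U(\g),H_{\ell_{(2)}})$-bimodules — are genuine verifications that depend on how the isomorphism of \cite[Theorem 3.2.3]{GJ} is constructed (the left $U(\g)$-module identification is indeed immediate from the presentations, the right-module compatibility is not), so they cannot simply be waved through.
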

\begin{remark}
By \cite[Proposition 5.3.1]{GJ}, the natural isomorphism $$\text{Wh}_0 (Q_0 \otimes_{H_0}) \overset{\varepsilon_0}{\simeq} \text{id}$$ is given by
\begin{equation}
\label{epsilon0}
\begin{split}
(\varepsilon^{(0)}_M)^{-1} \colon M &\to \text{Wh}_0(Q_0 \otimes_{H_0} M)  \\
m &\mapsto 1 \otimes m,
\end{split}
\end{equation}
for any $M \in \m_0$ .
\end{remark}
\setcounter{equation}{0}
\section{Whittaker modules and $H_\ell$-modules as bimodule categories}
\label{chapter3}
We retain notation from Subsection \ref{preliminW} and we assume that $\ell$ is Lagrangian.\\
Our aim is to endow the category of $H_\ell$-modules with a bimodule structure over a category containing $U(\g)$-mod$_{\text{fin}}$. For this purpose, we firstly endow the category of Whittaker modules with a bimodule structure. Then, by means of Skryabin equivalence we will transport this structure to the category of $H_\ell$-modules.\\ \\
We denote by $\C_e$ the subcategory of $U(\g)$-mod on which $\mathfrak{m}_\ell$ acts locally nilpotently. 
\begin{remark}
\label{finitedimensionalmodulenil}
We stress that $\C_e$ contains the category $U(\g)$-$\text{mod}_{\text{fin}}$.
\end{remark}
We firstly show that $\C_e$ is a monoidal category. For this purpose, we need the following result.
\begin{lemma}
Let $\psi \colon \g \to \mathbb{C}$ be a linear form. Let $X,Z$ be in $U(\g)$-mod. Then,
\small
\begin{equation}
\label{formaz}
(y-\psi(y))^k.\bigl(\sum_{i=1}^{I}\sum_{j=1}^J \alpha_{i,j} x_i \otimes z_j \bigr)=\sum_{i=1}^I \sum_{j=1}^J \sum_{u=0}^{k} \alpha_{i,j} (y-\psi(y))^u.x_i \otimes y^{k-u}.z_j,
\end{equation}
for every $y \in \mathfrak{m}_\ell, x_i \in X, z_j \in Z$, $k \in \mathbb{N}$ and $\alpha_{i,j}\in \mathbb{C}$.
\end{lemma}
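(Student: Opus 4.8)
The plan is to prove the identity by induction on $k$, which is the natural strategy since the left-hand side is a $k$-fold application of the operator $(y - \psi(y))$ acting diagonally on a tensor product. The key computational input is the way $y \in \mathfrak{m}_\ell$ acts on a simple tensor $x \otimes z$ in $X \otimes Z$: namely $y.(x \otimes z) = (y.x) \otimes z + x \otimes (y.z)$, so that $(y - \psi(y)).(x \otimes z) = ((y - \psi(y)).x) \otimes z + x \otimes (y.z)$. By linearity it suffices to treat a single simple tensor $x \otimes z$, so I would first reduce to proving
\begin{equation*}
(y - \psi(y))^k.(x \otimes z) = \sum_{u=0}^{k} \binom{k}{u}^{?} \, (y - \psi(y))^u.x \otimes y^{k-u}.z,
\end{equation*}
and here one should note that the statement as written has \emph{no} binomial coefficients, so I would double-check: expanding the operator $D := (y-\psi(y)) \otimes 1 + 1 \otimes y$ (where the two summands commute as operators on $X \otimes Z$, since one acts only on the first factor and the other only on the second), the binomial theorem gives $D^k = \sum_{u=0}^k \binom{k}{u} ((y-\psi(y))^u \otimes 1)(1 \otimes y^{k-u})$. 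So in fact the clean statement does carry $\binom{k}{u}$; I would either insert the binomial coefficient or, if the paper genuinely wants the coefficient-free version, re-read how $\alpha_{i,j}$ is being used — but most likely the intended proof is exactly the commuting-operators plus binomial theorem argument, and I would present it that way.

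Concretely, the base case $k = 0$ (or $k=1$) is immediate from the definition of the tensor action. For the inductive step, assuming the formula for $k$, I would apply $(y - \psi(y))$ once more to each summand $(y-\psi(y))^u.x \otimes y^{k-u}.z$, using $(y-\psi(y)).(a \otimes b) = (y-\psi(y)).a \otimes b + a \otimes y.b$ with $a = (y-\psi(y))^u.x$ and $b = y^{k-u}.z$; this yields $(y-\psi(y))^{u+1}.x \otimes y^{k-u}.z + (y-\psi(y))^u.x \otimes y^{k-u+1}.z$, and then re-index the double sum and collect terms (the Pascal rule $\binom{k}{u-1} + \binom{k}{u} = \binom{k+1}{u}$ does the bookkeeping). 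Summing back over $i,j$ with the coefficients $\alpha_{i,j}$ and using bilinearity of $\otimes$ over $\mathbb{C}$ completes the induction.

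The only subtlety — and the one place I would be careful — is the claim that the two operators $(y-\psi(y)) \otimes 1$ and $1 \otimes y$ commute on $X \otimes_{\mathbb{C}} Z$; this is true precisely because the tensor product is over $\mathbb{C}$ and each operator is induced from an action on a single tensor factor, so their compositions in either order both send $x \otimes z \mapsto (y-\psi(y)).x \otimes y.z$. Everything else is routine: the reduction to simple tensors is immediate from linearity, and the inductive bookkeeping is a standard binomial-theorem computation. I do not expect a genuine obstacle here; the main thing to get right is the statement (whether the binomial coefficients appear), and I would reconcile that with the displayed Equation \eqref{formaz} before writing the final version.
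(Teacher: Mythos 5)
Your proposal is correct and follows essentially the same route as the paper: the paper also reduces to a simple tensor and proves by induction the identity $(y-\psi(y))^k.(x \otimes z)=\sum_{u=0}^k \binom{k}{u}(y-\psi(y))^u.x \otimes y^{k-u}.z$, then invokes linearity. Your worry about the binomial coefficients is well founded — the paper's own intermediate formula carries $\binom{k}{u}$ while the displayed statement of the lemma omits them, an imprecision that is harmless for the intended application (local nilpotency only needs each summand to vanish for large $k$), but your version with the coefficients is the correct one.
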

\begin{proof}
First of all, we can prove by induction that the following equation
\begin{equation}
\label{form1}
(y-\psi(y))^k.(x \otimes z)=\sum_{j=0}^k \binom{k}{j}(y-\psi(y))^j.x \otimes y^{k-j}.z,
\end{equation}
holds for every $y \in \mathfrak{m}_\ell, x \in X$ and $z \in Z$.
Finally, the linearity of the action gives Equation \eqref{formaz}.
\end{proof}
We are now in a position to prove the following.
\begin{lemma}
\label{strictC}
The category $\C_e$ endowed with the usual tensor product of modules is a monoidal category.
\end{lemma}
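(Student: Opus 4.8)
The plan is to observe that $\C_e$ is a full monoidal subcategory of $U(\g)\text{-mod}$ equipped with its standard tensor structure (coming from the Hopf algebra structure of $U(\g)$), so that the only things which genuinely need checking are that $\C_e$ is closed under the tensor product and contains the unit object; the associativity and unit constraints, together with the pentagon and triangle identities, are then inherited for free.

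First I would recall that for $X, Z \in U(\g)\text{-mod}$ the action on $X \otimes Z$ is induced by the comultiplication, so $y.(x \otimes z) = y.x \otimes z + x \otimes y.z$ for $y \in \g$; this is precisely the case $\psi = 0$ of the preceding lemma. Now take $X, Z \in \C_e$, fix $y \in \mathfrak{m}_\ell$, and let $v = \sum_{i,j} \alpha_{i,j}\, x_i \otimes z_j$ be an arbitrary element of $X \otimes Z$. Since this sum is finite and $y$ acts locally nilpotently on both $X$ and $Z$, there are $a, b \in \N$ with $y^a.x_i = 0$ and $y^b.z_j = 0$ for all $i,j$. Applying Equation~\eqref{formaz} with $\psi = 0$ and $k = a+b$, each summand $y^u.x_i \otimes y^{k-u}.z_j$ vanishes: if $u \ge a$ then $y^u.x_i = 0$, while if $u < a$ then $k - u > b$ and so $y^{k-u}.z_j = 0$. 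Hence $y^{a+b}.v = 0$, so $\mathfrak{m}_\ell$ acts locally nilpotently on $X \otimes Z$, i.e. $X \otimes Z \in \C_e$, and $\otimes$ restricts to a bifunctor $\C_e \times \C_e \to \C_e$.

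Finally I would note that the unit object of $(U(\g)\text{-mod}, \otimes)$ is the trivial one-dimensional module $\mathbb{C}$, on which every element of $\mathfrak{m}_\ell$ acts by $0$, hence locally nilpotently, so $\mathbb{C} \in \C_e$. Since $\C_e$ is a full subcategory of $U(\g)\text{-mod}$ closed under $\otimes$ and containing $\mathbb{C}$, the associativity constraint $a$ and the unit constraints of $U(\g)\text{-mod}$ restrict to natural isomorphisms of functors on $\C_e$, and the pentagon and triangle axioms hold on $\C_e$ because they already hold in $U(\g)\text{-mod}$. Thus $(\C_e, \otimes, a, \mathbb{C})$ is a monoidal category. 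I do not expect a serious obstacle: the only non-formal point is stability of local nilpotence under tensor products, which is exactly what the binomial identity of the previous lemma provides.
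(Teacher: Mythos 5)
Your proposal is correct and follows essentially the same route as the paper: both reduce the statement to closure of $\C_e$ under $\otimes$ inside the monoidal category $U(\g)$-mod and then deduce local nilpotence on $X \otimes Z$ from Equation \eqref{formaz} with $\psi=0$. Your version is slightly more explicit (the choice $k=a+b$ and the check that the trivial module lies in $\C_e$), but these are refinements of the same argument rather than a different approach.
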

\begin{proof}
Since $U(\g)$-mod is monoidal, it is enough to verify that $\C_e$ is closed under the tensor product, i.e. for $X, Z \in \C_e$, we have to show that $\mathfrak{m}_\ell$ acts locally nilpotently on $X \otimes_\mathbb{C} Z.$ A generic element of $X \otimes Z$ is of the form
$$
\sum_{i=1}^{I}\sum_{j=1}^J \alpha_{i,j} x_i \otimes z_j ,
$$
for some $\alpha_{i,j} \in \mathbb{C}$, $ x_i \in X$ and $z_j \in Z$. Let $y \in \mathfrak{m}_\ell$. Substituting $\psi=0$ in Equation \eqref{formaz}, we have 
$$
y^k . \bigl(\sum_{i=1}^{I}\sum_{j=1}^J \alpha_{i,j} x_i \otimes z_j\bigr)= \sum_{i=1}^I \sum_{j=1}^J \sum_{u=0}^{k} \alpha_{i,j} y^u.x_i \otimes y^{k-u}.z_j.
$$
Since $\mathfrak{m}_\ell$ acts locally nilpotently on $X$ and $Z$, there exists a sufficient large $k \in \mathbb{N}$, such that one between $y^u.x_i$ and $y^{k-u}.z_j$ vanishes for any $u \in \{0, \dots, k\}$ and any $i,j$, concluding the proof.
\end{proof}
Recall that $\m$ stands for the category of Whittaker modules, that is the subcategory of $U(\g)$-mod on which $x-\chi(x)$ acts locally nilpotently for each $x \in \mathfrak{m}_\ell$. 
\begin{remark}
The category $\m$ does not contain finite-dimensional $U(\g)$-modules if $e \neq 0$. Indeed, let $V$ be a finite-dimensional $U(\g)$-module and suppose that $(x-\chi(x))$ acts locally nilpotently on $V$ for every $x \in \mathfrak{m}_\ell$. This assumption together with Remark \ref{finitedimensionalmodulenil} and with the fact that the elements $x$ and $x-\chi(x)$ commute for every $x \in \mathfrak{m}_\ell$ imply that $x-(x-\chi(x))=\chi(x)$ acts locally nilpotently on $V$. This condition holds if and only if $\chi(x)=0$ for every $x \in \mathfrak{m}_\ell$; this would contradict that  $(\cdot|\cdot)$ is non degenerate and $\g(-2) \subseteq \mathfrak{m}_\ell$. 
\end{remark}
We now define natural left and right actions of $\C_e$ on $U(\g)$-mod by means of the tensor product of modules $\otimes_\mathbb{C}$ in $U(\g)$-mod. We set: 
\begin{equation}
\begin{split}
\otimes_\m := \otimes_\mathbb{C}  \colon \C_e \times \m &\to U(\g)-\text{mod}\\
(-,\sim) &\mapsto - \otimes_\mathbb{C} \sim,
\end{split}
\end{equation}
and
\begin{equation}
\begin{split}
 \otimes^\m := \otimes_\mathbb{C}  \colon \m \times \C_e &\to U(\g)-\text{mod}\\
(-, \sim) &\mapsto - \otimes_\mathbb{C} \sim .
\end{split}
\end{equation}
From now on, we will write simply $\otimes$ to denote $\otimes_\mathbb{C}$.\\
We have the following result:
\begin{lemma}
The category $\m$ is a $(\C_e,\C_e)$-bimodule category, via the tensor product of $U(\g)$-modules, where $m_{X,Y,Z}, m_{X,Y,Z}^r$ and $b_{X,M,Y}$ are the shift of parentheses.
\end{lemma}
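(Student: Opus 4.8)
The plan is to verify the three requirements in turn: that the prescribed action bifunctors actually take values in $\m$, that the shift-of-parentheses maps are well-defined natural isomorphisms, and that the coherence diagrams commute. Essentially all the work is concentrated in the first point, which I expect to be the only real obstacle.

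First I would show that $X \otimes M \in \m$ and $M \otimes X \in \m$ whenever $X \in \C_e$ and $M \in \m$, so that $\otimes_\m$ and $\otimes^\m$ genuinely land in $\m$ rather than merely in $U(\g)$-mod. Fix $y \in \mathfrak{m}_\ell$. On $M \otimes X$ the operator $y - \chi(y)$ is the sum of the two commuting operators $(y-\chi(y))|_M \otimes \id_X$ and $\id_M \otimes y|_X$, so the binomial expansion of $(y-\chi(y))^k$ applied to an arbitrary (finite sum of simple tensors) element of $M \otimes X$ is precisely Equation \eqref{formaz} with $\psi = \chi$, the Whittaker factor in first position and the $\C_e$-factor in second. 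Since $y - \chi(y)$ is locally nilpotent on $M$ and $y$ is locally nilpotent on $X$, as soon as $k$ exceeds the sum of the two corresponding exponents every summand vanishes; hence $y - \chi(y)$ is locally nilpotent on $M \otimes X$, i.e. $M \otimes X \in \m$. For $X \otimes M$ I would use the mirror decomposition $y - \chi(y) = y|_X \otimes \id_M + \id_X \otimes (y-\chi(y))|_M$ together with the same argument (the proof of the analogue of Equation \eqref{formaz} with the two tensor factors exchanged is identical to the one already given). Functoriality in each variable is inherited from $U(\g)$-mod.

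With this in hand, I would take $m_{X,Y,M}$, $m^r_{M,W,Z}$ and $b_{X,M,Z}$ to be the associativity constraint of $U(\g)$-mod (and its inverse, according to the direction in the definitions) restricted to the relevant objects, which by the previous step all lie in $\m$; naturality of these maps is immediate from naturality in $U(\g)$-mod, and the unit condition holds because $\mathbb{C}$, the unit object, lies in $\C_e$ (the algebra $\mathfrak{m}_\ell$ acts by zero on it) and $\mathbb{C} \otimes_\m - \cong \id_\m$ is therefore an autoequivalence, likewise on the right. Finally, the pentagon axiom for the left $\C_e$-module structure, its mirror for the right structure, and the two coherence hexagons \eqref{bim1} and \eqref{bim2} for the bimodule structure all have every edge an instance of the associator of the monoidal category $\C_e$ of Lemma \ref{strictC} (which stays inside the relevant subcategories precisely by the first step), so they commute by Mac Lane's coherence theorem; equivalently, they are among the diagrams forced to commute by the pentagon axiom in $U(\g)$-mod. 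This completes the verification.
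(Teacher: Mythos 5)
Your proposal is correct and follows essentially the same route as the paper: the substantive step is the closure of both actions, verified via Equation \eqref{formaz} with $\psi=\chi$ (and its mirrored version), after which the constraints are the canonical shifts of parentheses. The only cosmetic difference is that you justify the pentagon and compatibility diagrams \eqref{bim1}, \eqref{bim2} by Mac Lane coherence in $U(\g)$-mod, whereas the paper simply identifies the associativity constraint with the identity so that these diagrams become trivial.
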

\begin{proof}
Firstly, we verify that that $M \otimes X$ is a Whittaker module for every Whittaker module $M$ and for every $U(\g)$-module $X$ on which $\mathfrak{m}_\ell$ acts locally nilpotently, that is we have to show that $y-\chi(y)$ acts locally nilpotently on $M \otimes X$, for every $M \in \m,  X \in \C_e$ and $y \in \mathfrak{m}_\ell$. This is equivalent to show that for every $z \in M \otimes X$, there exists $k_z \in \N$ such that $(y-\chi(y))^{k_z}.z=0$ for every $y \in \mathfrak{m}_\ell$. A generic element $z$ in $M \otimes X$ is of the form
$$ \sum_{i=1}^I \sum_{j=1}^J\alpha_{i,j} m_i \otimes x_j,$$
for some $\alpha_{i,j} \in \mathbb{C}$, $x_j \in X$ and $m_i \in M$. Then, our goal is to show that there exists $k_z \in \N$ such that
$$(y-\chi(y))^{k_z}.\bigl(\sum_{i=1}^I \sum_{j=1}^J\alpha_{i,j} m_i \otimes x_j\bigr)=0,$$
for every $y \in \mathfrak{m}_\ell$ and for $\alpha_{i,j}, m_i$ and $x_j$ as above.\\
Applying Equation \eqref{formaz}, we get:
$$
(y-\chi(y))^k.z=\sum_{i=1}^I \sum_{j=1}^J \sum_{u=0}^{k} \alpha_{i,j} (y-\chi(y))^u.m_i \otimes y^{k-u}.x_j
$$
for  $m_i \in M, x_j \in X$ and $\alpha_{i,j } \in \mathbb{C}$ as above and for every $y \in \mathfrak{m}_\ell$. Since $M$ is a Whittaker module and $X$ is a $U(\g)$-module on which $\mathfrak{m}_\ell$ acts locally nilpotently, there exists a sufficient large $K_z$, such that one between $(y-\chi(y))^u.m_i$ and $y^{K_z-u}.x_j$ vanishes for any $u \in \{0, \dots, K_z \}$ and any $i,j$. \\
The proof for the right action is analogous.\\ \\
Secondly, we have to verify that $\m$ is both a left and a right $\C_e$-module category. Since $m=m^r$ and the associativity constraint of the monoidal category $\C_e$ is identified with the identity as it happens for $U(\g)$-$\text{mod}$ (Lemma \ref{strictC}), the pentagon diagrams reduce to the trivial ones.\\
For similar reasons, diagrams \eqref{bim1} and \eqref{bim2}, become the trivial ones. Hence, we conclude that $\m$ is a $(\C_e,\C_e)$-bimodule category.
\end{proof}
Now, our aim is to transpose the $(\C_e,\C_e)$-bimodule structure of $\m$ to $\Nn$ by means of Skryabin equivalence, following the construction of Subsection \ref{transportofstruct}. \\
We define the right translation functor and the left translation functor as those given in Definition \ref{leftact} and Definition \ref{rightact}, specializing $F$ to $\text{Wh}$ and $G$ to $Q_\ell \otimes_{H_\ell}$.\\
More precisely, the definition of the right translation functor 
becomes:
\begin{equation}
\label{rightrans}
\begin{split}
\otimes^\Nn := \trans^r \colon \Nn \times \C_e &\to \Nn \\
(-,\sim) &\mapsto \text{Wh}((Q_\ell \otimes_{H_\ell} -) \otimes \sim),
\end{split}
\end{equation}
while  the left translation functor reads as
\begin{equation}
\label{lefttrans}
\begin{split}
\otimes_\Nn := \trans \colon \C_e \times \Nn &\to \Nn \\
(-,\sim) &\mapsto \text{Wh}(- \otimes (Q_\ell \otimes_{H_\ell} \sim)).
\end{split}
\end{equation}
Moreover, substituting $m=m^r= \text{id}$ and $F=\text{Wh}(-), G= Q_\ell \otimes_{H_\ell}-$ in equations \eqref{assconstraintleft} and \eqref{rightass}, we get, respectively, the left and the right module associativity constraints $n_{X,Y,N}$ and $n^r_{X,Y,N}$, for all $X, Y \in \C_e$ and $N \in \Nn$.\\ In particular, for $X,Y \in \C_e$ and $M \in \m$,  the module associativity constraint $$n_{X,Y,N}^r  \colon \text{Wh}((Q_\ell \otimes_{H_\ell}(\text{Wh}((Q_\ell \otimes_{H_\ell}M) \otimes X))\otimes Y) \to \text{Wh}((Q_\ell \otimes_{H_\ell} M)\otimes (X \otimes Y))$$ is the restriction of the following map 
\begin{equation}
\begin{split}
\label{leftassN}
\varphi \colon (Q_\ell \otimes_{H_\ell}(\text{Wh}((Q_\ell \otimes_{H_\ell}M) \otimes X))\otimes Y &\to (Q_\ell \otimes_{H_\ell} M)\otimes (X \otimes Y)\\
((u+I_\ell)(((u'+I_\ell)\otimes m)\otimes x))\otimes y &\mapsto u(((u'+I_\ell)\otimes m) \otimes x) \otimes y.
\end{split}
\end{equation}
\begin{remark}
Let  $U(\g)$-$\text{mod}_{\text{fin}}$ be the category of finite-dimensional $U(\g)$-modules. Then, $U(\g)$-$\text{mod}_{\text{fin}}$ is a subcategory of $\C_e$, which is closed under the tensor product. Hence, $U(\g)$-$\text{mod}_{\text{fin}}$ is a monoidal category.\\
If we restrict the definitions of the right translations functor \eqref{rightrans} and of the module associativity constraint $n^r$ to $U(\g)$-$\text{mod}_{\text{fin}}$, we get the definition for the translation functors given in \cite{G} and \cite{BK}.
\end{remark}
Furthermore, let $p_{X,N,Z}$ be  the compatibility isomorphisms defined as in \eqref{compatible}. \\
By Lemma \ref{lemmabimod}, we deduce the following:
\begin{theorem}
The category $\Nn$ endowed with $\trans$, $n_{X,Y,N}$, $\trans^r$, $n_{X,Y,N}^r$ and $p_{X,N,Z}$ defined above, is a $(\C_e,\C_e)$-bimodule category.
\end{theorem}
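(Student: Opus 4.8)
The plan is to deduce the statement directly from the transport-of-structure result of Lemma \ref{lemmabimod}, with essentially no work beyond a notational identification. First I would invoke the previous lemma, which establishes that $\m$, the category of Whittaker modules, is a $(\C_e,\C_e)$-bimodule category, with left and right module associativity constraints $m = m^r$ and middle associativity constraint $b$ all equal to the canonical re-bracketing isomorphisms for $\otimes$. Next I would recall from Theorem \ref{equi} that $\text{Wh} \colon \m \to \Nn$ is an equivalence of categories with quasi-inverse $Q_\ell \otimes_{H_\ell} -$, together with the natural isomorphism $\varepsilon$ described explicitly in Remark \ref{epsilonremark} and a natural isomorphism $\eta \colon \text{Wh}\circ(Q_\ell\otimes_{H_\ell}-) \xrightarrow{\sim} \id_\Nn$.

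With this in hand, I would apply Lemma \ref{lemmabimod} verbatim, taking $\C = \D = \C_e$, $F = \text{Wh}$ and $G = Q_\ell \otimes_{H_\ell} -$. The lemma then endows $\Nn$ with a $(\C_e,\C_e)$-bimodule structure whose left action, right action, and three constraints are by definition the images under the formulae \eqref{leftact}, \eqref{rightact}, \eqref{assconstraintleft}, \eqref{rightass} and \eqref{compatible} of the corresponding data on $\m$; since $m = m^r = \id$ here, these formulae simplify accordingly.

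The last step, and the only one requiring any care, is to check that the resulting structure is literally the one appearing in the statement, namely $\trans$, $\trans^r$, $n_{X,Y,N}$, $n^r_{X,Y,N}$ and $p_{X,N,Z}$. But this is precisely how those objects were introduced above: $\trans$ and $\trans^r$ were defined as the specialisations of \eqref{leftact} and \eqref{rightact} (see \eqref{lefttrans} and \eqref{rightrans}), and $n$, $n^r$, $p$ were obtained by substituting $m = m^r = \id$ and $F = \text{Wh}$, $G = Q_\ell\otimes_{H_\ell}-$ into \eqref{assconstraintleft}, \eqref{rightass} and \eqref{compatible}. Hence the identification is immediate. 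I do not anticipate a genuine obstacle: all the coherence verifications, the pentagons for $n$ and $n^r$ and the compatibility diagrams \eqref{bim1} and \eqref{bim2} for $p$, have already been carried out once and for all inside Lemma \ref{lemmabimod}, with the routine diagram chases relegated to the Appendix.
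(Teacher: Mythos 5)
Your proposal is correct and follows exactly the paper's argument: the theorem is deduced directly from Lemma \ref{lemmabimod}, applied with $\C=\D=\C_e$, $F=\text{Wh}$, $G=Q_\ell\otimes_{H_\ell}-$, and the $(\C_e,\C_e)$-bimodule structure on the Whittaker category $\m$, the translated data $\trans$, $\trans^r$, $n$, $n^r$, $p$ being by construction the specialisations of \eqref{leftact}, \eqref{rightact}, \eqref{assconstraintleft}, \eqref{rightass} and \eqref{compatible}. No further verification is needed beyond the identification you describe.
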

Moreover, by Lemma \ref{C-invariants} we obtain the following.
\begin{lemma}
\label{Whittakerinvariants}
Let $\varepsilon$ be the natural isomorphism \eqref{epsilon}, let  $s_{X,M}:=\text{Wh}(\text{id}_X \otimes \varepsilon ^{-1}_{M})$ and $t_{X,N}:=\varepsilon_{X \otimes (Q_\ell \otimes_{H_\ell} N)}$, for all $X \in \C_e, M \in \m$ and $N \in \Nn$. Then, the pairs $(\text{Wh}, s)$ and $(Q_\ell \otimes_{H_\ell},t)$ are $\C_e$-module functors.
\end{lemma}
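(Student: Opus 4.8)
The plan is to derive this as an immediate instance of the transport-of-structure result of Lemma \ref{C-invariants}. First I would fix the dictionary: take the ambient monoidal category to be $\C_e$, which is monoidal by Lemma \ref{strictC}; take $\m$ to be the category of Whittaker modules, regarded as a left $\C_e$-module category via the left $\C_e$-module structure on $\m$ constructed above (so the module associativity constraint $m$ is the trivial shift of parentheses, i.e.\ the identity); and take $\Nn$ to be the category of finitely-generated $H_\ell$-modules. By Theorem \ref{equi}, $\text{Wh}\colon \m \to \Nn$ is an equivalence with quasi-inverse $Q_\ell \otimes_{H_\ell}-\colon \Nn \to \m$; the accompanying natural isomorphisms are $\varepsilon\colon Q_\ell \otimes_{H_\ell}\text{Wh}\xrightarrow{\sim}\text{id}_\m$, given explicitly in Remark \ref{epsilonremark} by \eqref{epsilon}, together with $\eta\colon \text{Wh}(Q_\ell \otimes_{H_\ell}-)\xrightarrow{\sim}\text{id}_\Nn$. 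Thus all hypotheses of Lemma \ref{C-invariants} are met with $F=\text{Wh}$ and $G=Q_\ell \otimes_{H_\ell}-$.

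Next I would observe that the module data on $\Nn$ appearing in the statement is literally the data produced by the recipe of Subsection \ref{transportofstruct}. Indeed, unwinding \eqref{lefttrans} gives $X\otimes_\Nn N=\text{Wh}(X\otimes(Q_\ell\otimes_{H_\ell}N))=F(X\otimes_\m G(N))$, which is exactly \eqref{leftact}; and since the constraint $m$ of $\m$ is the identity, the formula \eqref{assconstraintleft} for $n_{X,Y,N}$ reduces to the constraint used above to make $\Nn$ a $\C_e$-module category. Hence Lemma \ref{C-invariants} applies verbatim, and after the substitution $F=\text{Wh}$, $G=Q_\ell\otimes_{H_\ell}-$ its formulas $s_{X,M}=F(\text{id}_X\otimes_\m\varepsilon_M^{-1})$ and $t_{X,N}=\varepsilon_{X\otimes_\m G(N)}$ become precisely $s_{X,M}=\text{Wh}(\text{id}_X\otimes\varepsilon_M^{-1})$ and $t_{X,N}=\varepsilon_{X\otimes(Q_\ell\otimes_{H_\ell}N)}$ as in the statement. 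Therefore $(\text{Wh},s)$ and $(Q_\ell\otimes_{H_\ell}-,t)$ are $\C_e$-module functors, and in fact mutually quasi-inverse equivalences of $\C_e$-module categories.

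There is no genuine obstacle here: the argument is pure bookkeeping once the dictionary is in place. The only point deserving a line of care is checking that $\varepsilon_M^{-1}$ is well defined for every $M\in\m$ --- immediate since $\varepsilon$ is a natural isomorphism --- and that the morphisms named $s$ and $t$ in the statement coincide with those of Lemma \ref{C-invariants} under the chosen substitution, which is read off directly from the definitions. If desired, one could equally invoke the right-handed and bimodule variants (Lemma \ref{right} and Lemma \ref{lemmabimod}) to upgrade this to a statement about right-module and bimodule functors, but the stated lemma only requires the left version above.
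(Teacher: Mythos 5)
Your proposal is correct and matches the paper's own derivation: the lemma is stated as an immediate consequence of Lemma \ref{C-invariants} with $F=\text{Wh}$, $G=Q_\ell\otimes_{H_\ell}-$ and $\varepsilon$ from Remark \ref{epsilonremark}, which is exactly your dictionary. The only difference is that you spell out the bookkeeping (identification of \eqref{lefttrans} with \eqref{leftact} and the trivial constraint $m$) that the paper leaves implicit.
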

\setcounter{equation}{0}
\section{$\C$-equivariant functors for reduction in stages}
\label{chapter4}
We retain notations from Subsection \ref{preliminariestage}. In particular, we stress that $\g$ is a finite-dimensional simple Lie algebra over $\mathbb{C}$ and that assumptions $(\diamondsuit)$ are satisfied.\\
Our final goal is to show that the equivalence introduced in Subsection \ref{preliminariestage} is invariant under the action of the category $\C_2$ and thus also of $U(\g)$-$\text{mod}_{\text{fin}}$.  

\begin{remark}
\label{C2modcat}
As we have shown in Section \ref{chapter3}, the category $\m_i$ is a $(\C_i,\C_i)$-bimodule category. Since $\C_2 \subseteq \C_1$, the category $\m_1$ is also a $(\C_2,\C_2)$-bimodule category. In addition, $\m_2$ is a $(\C_2,\C_2)$-bimodule subcategory of $\m_1$.
\end{remark}
We firstly need to understand the $\C_2$-module structure of $\m_0$.
\begin{lemma}
The category $\m_0$ is a $(\C_2, \C_2)$-bimodule category, where the left action bifunctor $\otimes_{\m_0}$ is $\trans_{(1)}\,_{|_{\m_0}}$, while the right one is  $\trans_{(1)}\,_{|_{\m_0}}^r$ .
\end{lemma}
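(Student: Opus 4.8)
The plan is to realize $\m_0$ as a $(\C_2,\C_2)$-bimodule \emph{subcategory} of the category $\Nn_1$ of finitely generated $H_{\ell_{(1)}}$-modules. By the results of Section \ref{chapter3} applied to the nilpotent element $e_1$, the category $\Nn_1$ carries a $(\C_1,\C_1)$-bimodule structure whose left and right actions are the translation functors $\trans_{(1)}$ and $\trans_{(1)}^r$, with associativity and middle constraints as in \eqref{assconstraintleft}, \eqref{rightass}, \eqref{compatible}. Since $\mathfrak{m}_{\ell_{(1)}} \subseteq \mathfrak{m}_{\ell_{(2)}}$, the inclusion $\C_2 \hookrightarrow \C_1$ is strictly monoidal, so restriction of scalars makes $\Nn_1$ a $(\C_2,\C_2)$-bimodule category. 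As $\m_0$ is a full subcategory of $\Nn_1$, it inherits all the constraints and the coherence diagrams \eqref{bim1}, \eqref{bim2} automatically; the only points to check are that $\m_0$ is closed under the two $\C_2$-actions and that $\mathbb{C}\trans_{(1)}-$ restricts to an autoequivalence of $\m_0$.

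For the closure, fix $X \in \C_2$ and $M \in \m_0$. By the restricted Skryabin equivalence of Theorem \ref{Skryabin_0} --- whose source inside $\m_1$ is $\m_2$, as is forced by the identity $\text{Wh}_2 = \text{Wh}_0 \circ \text{Wh}_1$ --- the module $Q_{\ell_{(1)}} \otimes_{H_{\ell_{(1)}}} M$ lies in $\m_2$. By Remark \ref{C2modcat}, $\m_2$ is a $(\C_2,\C_2)$-bimodule subcategory of $\m_1$ with both actions given by $\otimes$, hence $X \otimes (Q_{\ell_{(1)}} \otimes_{H_{\ell_{(1)}}} M) \in \m_2$; applying $\text{Wh}_1$ and Theorem \ref{Skryabin_0} once more, $\text{Wh}_1\bigl(X \otimes (Q_{\ell_{(1)}} \otimes_{H_{\ell_{(1)}}} M)\bigr) = X \trans_{(1)} M$ lies in $\m_0$. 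The same argument with the right $\otimes$-action on $\m_2$ shows $M \trans_{(1)}^r X \in \m_0$. Finally $\mathbb{C} \trans_{(1)} M = \text{Wh}_1(\mathbb{C} \otimes (Q_{\ell_{(1)}} \otimes_{H_{\ell_{(1)}}} M)) \cong \text{Wh}_1(Q_{\ell_{(1)}} \otimes_{H_{\ell_{(1)}}} M) \cong M$, the last isomorphism being Skryabin equivalence for $e_1$ (cf. Remark \ref{epsilonremark}), so $\mathbb{C}\trans_{(1)}-$ restricts to an autoequivalence of $\m_0$. This proves that $\m_0$ is a $(\C_2,\C_2)$-bimodule subcategory of $\Nn_1$ with the asserted actions $\trans_{(1)}|_{\m_0}$ and $\trans_{(1)}^r|_{\m_0}$.

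Equivalently --- and this is the viewpoint that will be convenient in Section \ref{chapter4} --- the same structure is obtained by transport of structure: apply Lemma \ref{lemmabimod} with $\C = \D = \C_2$, $\m = \m_2$, $\Nn = \m_0$, $F = \text{Wh}_1$, $G = Q_{\ell_{(1)}} \otimes_{H_{\ell_{(1)}}}(-)$ and $\varepsilon$ the natural isomorphism of Remark \ref{epsilonremark} for $e_1$; then by \eqref{leftact} and \eqref{rightact} the transported actions are literally $\text{Wh}_1(- \otimes (Q_{\ell_{(1)}} \otimes_{H_{\ell_{(1)}}} \sim))$ and $\text{Wh}_1((Q_{\ell_{(1)}} \otimes_{H_{\ell_{(1)}}} -) \otimes \sim)$, that is $\trans_{(1)}$ and $\trans_{(1)}^r$ of \eqref{lefttrans}, \eqref{rightrans}, restricted to $\m_0$. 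I expect the only substantive point to be the closure statement of the previous paragraph, namely that $Q_{\ell_{(1)}} \otimes_{H_{\ell_{(1)}}}(-)$ and $\text{Wh}_1$ interchange $\m_0$ and $\m_2$; this is precisely where the direct-sum decomposition $\mathfrak{m}_{\ell_{(2)}} = \mathfrak{m}_{\ell_{(1)}} \oplus \mathfrak{m}_0$ of the standing assumptions $(\diamondsuit)$ enters, through Theorem \ref{Skryabin_0}, while everything else is a formal consequence of the material of Subsection \ref{transportofstruct}.
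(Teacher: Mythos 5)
Your proof is correct, and its ``equivalently'' paragraph is precisely the paper's argument: the paper simply notes that $\m_2$ is a $(\C_2,\C_2)$-bimodule category (Remark \ref{C2modcat}), that $\text{Wh}_1$ restricts to an equivalence $\m_2 \simeq \m_0$ (Theorem \ref{Skryabin_0}), and then transports the structure as in Subsection \ref{transportofstruct} and Section \ref{chapter3}, so that the transported actions are by construction the restrictions of $\trans_{(1)}$ and $\trans_{(1)}^r$; you also read Theorem \ref{Skryabin_0} correctly as giving $\m_2 \simeq \m_0$, as the identity $\text{Wh}_2 = \text{Wh}_0 \circ \text{Wh}_1$ forces and as the paper itself uses. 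Your primary route --- realizing $\m_0$ as a full subcategory of $\Nn_1$, restricting the $(\C_1,\C_1)$-structure of Section \ref{chapter3} along $\C_2 \subseteq \C_1$, and then checking closure of $\m_0$ under the two actions by using that $Q_{\ell_{(1)}}\otimes_{H_{\ell_{(1)}}}(-)$ and $\text{Wh}_1$ interchange $\m_0$ and $\m_2$ --- is a mild but worthwhile variant: it makes explicit the point the paper leaves implicit, namely that $\trans_{(1)}$ and $\trans_{(1)}^r$ genuinely preserve $\m_0$ (which is exactly where the assumptions $(\diamondsuit)$ enter, through Theorem \ref{Skryabin_0}), and it explains in what sense the constraints on $\m_0$ are inherited. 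The only small remark you might add when declaring the two viewpoints equivalent is that they yield the same constraints: the module and middle associativity isomorphisms transported from $\m_2$ are given by the same formulas as those transported from $\m_1$, with $\varepsilon^{(1)}$ evaluated at objects that lie in $\m_2$, so the restriction of the $\Nn_1$-structure and the transport along $\text{Wh}_1|_{\m_2}$ coincide on the nose.
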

\begin{proof}
Firstly, recall that $\m_2$ is a $(\C_2,\C_2)$-bimodule category as we observed in Remark \ref{C2modcat}.\\By Theorem \ref{Skryabin_0}, the functor $\text{Wh}_1$ establishes an equivalence between the categories $\m_2$ and $\m_0$. Hence, by transport of structure, we can endow $\m_0$ with a $\C_2$-module structure following the construction of Subsection \ref{chapter3}. In particular, this implies that the left action bifunctor is the restriction to $\m_0$ of $\trans_{(1)}$, while the right one is the restriction to $\m_0$ of  $\trans_{(1)}^r$. 
\end{proof}
The previous lemma together with Lemma \ref{C-invariants} gives the following corollary.
\begin{corollary}
The functor $\text{Wh}_1|_{\m_2}$ paired with  the natural isomorphism\\ $\text{Wh}_1|_{\m_2}(\id \otimes \varepsilon^{{(1)}^{-1}})$ is a $\C_2$-module functor, where $\varepsilon^{(1)}$ is the natural isomorphism defined in Equation \eqref{epsilon}. 
\end{corollary}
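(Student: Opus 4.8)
The plan is to read the statement directly off Lemma \ref{C-invariants}, applied to the equivalence produced in the previous lemma. By Remark \ref{C2modcat} the category $\m_2$ is a left $\C_2$-module category; by Theorem \ref{Skryabin_0} (as already used in the previous lemma) the functor $\text{Wh}_1$ restricts to an equivalence $\text{Wh}_1|_{\m_2} \colon \m_2 \xrightarrow{\sim} \m_0$, with quasi-inverse $(Q_{\ell_{(1)}} \otimes_{H_{\ell_{(1)}}} -)|_{\m_0}$ and with the natural isomorphism $(Q_{\ell_{(1)}} \otimes_{H_{\ell_{(1)}}} -)\circ \text{Wh}_1 \xrightarrow{\sim} \id_{\m_2}$ given by the restriction of $\varepsilon^{(1)}$ from Equation \eqref{epsilon}; and, crucially, the $\C_2$-module structure placed on $\m_0$ in the previous lemma is by construction exactly the one transported from $\m_2$ along this equivalence in the sense of Subsection \ref{transportofstruct}, using this same $\varepsilon^{(1)}$.

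First I would match this data with the hypotheses of Lemma \ref{C-invariants}: take $\C = \C_2$, $\m = \m_2$, $\Nn = \m_0$, $F = \text{Wh}_1|_{\m_2}$, $G = (Q_{\ell_{(1)}} \otimes_{H_{\ell_{(1)}}} -)|_{\m_0}$ and $\varepsilon = \varepsilon^{(1)}$, so that the transported action $\otimes_{\m_0}$ and its module associativity constraint are precisely $\trans_{(1)}|_{\m_0}$ and the constraint $n$ appearing in the previous lemma. With this dictionary, Lemma \ref{C-invariants} asserts verbatim that the pair $(F,s)$ with $s_{X,M} := F(\id_X \otimes (\varepsilon^{(1)}_M)^{-1}) = \text{Wh}_1|_{\m_2}(\id_X \otimes (\varepsilon^{(1)}_M)^{-1})$ is a $\C_2$-module functor, which is exactly the statement. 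The one point that is genuine rather than pure bookkeeping is to observe that the $\C_2$-module structure of $\m_0$ referred to here coincides with the transported structure attached to this specific $\varepsilon^{(1)}$, so that the $s$ appearing in the corollary is literally the $s$ of Lemma \ref{C-invariants}; this is immediate from the proof of the previous lemma, where $\m_0$ was endowed with its structure precisely by the transport-of-structure recipe of Subsection \ref{transportofstruct} applied to the Skryabin equivalence $\text{Wh}_1$.

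I do not expect any real obstacle: the content lies entirely in the earlier general lemmas, and the corollary is their specialization. One could, if desired, run the same argument through the right-handed version of Lemma \ref{C-invariants} contained in Lemma \ref{right}, and then through Lemma \ref{lemmabimod}, to conclude that $\text{Wh}_1|_{\m_2}$ is in fact a $(\C_2,\C_2)$-bimodule functor; only the left-module assertion is recorded here.
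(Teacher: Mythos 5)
Your proposal is correct and is essentially the paper's own argument: the paper derives this corollary immediately by combining the preceding lemma (that $\m_0$ carries exactly the $(\C_2,\C_2)$-structure transported from $\m_2$ through the restricted Skryabin equivalence $\text{Wh}_1|_{\m_2}$, with quasi-inverse $Q_{\ell_{(1)}}\otimes_{H_{\ell_{(1)}}}-$ and counit $\varepsilon^{(1)}$) with Lemma \ref{C-invariants}, which is precisely the specialization you spell out. Your closing observation that the same data also yields a right-module and hence bimodule functor via Lemma \ref{right} and Lemma \ref{lemmabimod} is a harmless extra remark, not needed for the stated corollary.
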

Now, we define the following natural isomorphisms  
\begin{equation*}
u_{X,N_2} \colon \text{Wh}_1 \circ (Q_{\ell_{(2)}}\otimes_{H_{\ell_{(2)}}} (X \trans_{(2)} N_2)) \to X \trans_{(1)}  \text{Wh}_1 \circ (Q_{\ell_{(2)}}\otimes_{H_{\ell_{(2)}}}  N_2)
\end{equation*}
as 
\begin{equation}
\label{u,Q0}
u_{X,N_2}:=\text{Wh}_1 \bigl( (\text{id}_X \otimes {\varepsilon^{(1)^{-1}}_{Q_{\ell_{(2)}} \otimes_{H_{\ell_{(2)}}}N_2}}) \circ \varepsilon^{(2)}_{X \otimes (Q_{\ell_{(2)}}\otimes_{H_{\ell_{(2)}} }N_2)}\bigr)
\end{equation}
and
\begin{equation*}
v_{X,M_0} \colon \text{Wh}_2 \circ (Q_{\ell_{(1)}}\otimes_{H_{\ell_{(1)}}} (X \trans_{(1)} M_0)) \to X \trans_{(2)}  \text{Wh}_2 \circ (Q_{\ell_{(1)}}\otimes_{H_{\ell_{(1)}}}  M_0)
\end{equation*}
as 
\begin{equation}
\label{v,Wh0}
v_{X,M_0}:=\text{Wh}_2 \bigl( (\text{id}_X \otimes {\varepsilon^{(2)^{-1}}_{Q_{\ell_{(1)}} \otimes_{H_{\ell_{(1)}}}M_0}}) \circ \varepsilon^{(1)}_{X \otimes (Q_{\ell_{(1)}}\otimes_{H_{\ell_{(1)}} }M_0)}\bigr).
\end{equation}
for every $X \in \C_2, N_2 \in \Nn_2$ and $M_0 \in \m_0$ and for $\varepsilon$ the natural isomorphism defined in Remark \ref{epsilon}.\\
We are now in a position to prove the main result. 
\begin{theorem}
Let $u$ and $v$ be the natural isomorphisms defined respectively in \eqref{u,Q0} and \eqref{v,Wh0}. Then, the pairs $(Q_0 \otimes_{H_0}, u)$ and $(\text{Wh}_0, v)$ are mutually inverse $\C_2$-module equivalences. 

\end{theorem}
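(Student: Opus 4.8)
The plan is to recognize both $Q_0\otimes_{H_0}-$ and $\text{Wh}_0$ as composites of $\C_2$-module equivalences already at our disposal, so that the statement follows from Lemma~\ref{compositionfunctor} (a composite of $\C_2$-module functors is again one, and a composite of equivalences is an equivalence); the only real work is to check that the resulting structure isomorphisms are on the nose $u$ and $v$.

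First I would assemble the two building blocks. Applying Lemma~\ref{Whittakerinvariants} to the nilpotent element $e_2$ (so that $\m=\m_2$, $\Nn=\Nn_2$ and $\C_e=\C_2$), the pair $(\text{Wh}_2,s^{(2)})$ with $s^{(2)}_{X,M}=\text{Wh}_2(\id_X\otimes\varepsilon^{(2)^{-1}}_M)$ is a $\C_2$-module functor $\m_2\to\Nn_2$, and its quasi-inverse $(Q_{\ell_{(2)}}\otimes_{H_{\ell_{(2)}}}-,t^{(2)})$ with $t^{(2)}_{X,N}=\varepsilon^{(2)}_{X\otimes(Q_{\ell_{(2)}}\otimes_{H_{\ell_{(2)}}}N)}$ is a $\C_2$-module functor $\Nn_2\to\m_2$; both are equivalences by Theorem~\ref{equi}. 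On the other hand, the Lemma and Corollary preceding the statement tell us that $\m_0$ carries its $\C_2$-module structure by transport of structure along $\text{Wh}_1|_{\m_2}$, that $(\text{Wh}_1|_{\m_2},s^{(1)})$ with $s^{(1)}_{X,M}=\text{Wh}_1(\id_X\otimes\varepsilon^{(1)^{-1}}_M)$ is a $\C_2$-module functor $\m_2\to\m_0$, and (by Theorem~\ref{Skryabin_0}) that $\text{Wh}_1|_{\m_2}$ is an equivalence; by Lemma~\ref{C-invariants} its quasi-inverse $G_1:=(Q_{\ell_{(1)}}\otimes_{H_{\ell_{(1)}}}-)|_{\m_0}\colon\m_0\to\m_2$ is then a $\C_2$-module functor with structure isomorphism $t^{(1)}_{X,N}=\varepsilon^{(1)}_{X\otimes(Q_{\ell_{(1)}}\otimes_{H_{\ell_{(1)}}}N)}$ (here $\otimes=\otimes_{\mathbb{C}}$ is the action on $\m_1$ and $\m_2$, and $\trans_{(1)}$ the transported one on $\m_0$).

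Next I would use the equality $\text{Wh}_2=\text{Wh}_0\circ\text{Wh}_1$ of Theorem~\ref{Skryabin_0}, together with the unit/counit of the adjoint equivalence $\text{Wh}_1|_{\m_2}\dashv G_1$, to identify
\[
\text{Wh}_0\ \simeq\ \text{Wh}_2\circ G_1\colon\m_0\to\Nn_2,\qquad Q_0\otimes_{H_0}-\ \simeq\ \text{Wh}_1\circ(Q_{\ell_{(2)}}\otimes_{H_{\ell_{(2)}}}-)\colon\Nn_2\to\m_0,
\]
the first since $\text{Wh}_2\circ G_1=\text{Wh}_0\circ(\text{Wh}_1\circ G_1)\simeq\text{Wh}_0$, the second since both sides are quasi-inverse to the equivalence $\text{Wh}_0$; these are precisely the identifications under which the sources and targets of $u$ and $v$ in \eqref{u,Q0}--\eqref{v,Wh0} become those of the structure isomorphisms of the two composites. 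Then Lemma~\ref{compositionfunctor} produces, for $\text{Wh}_1\circ(Q_{\ell_{(2)}}\otimes_{H_{\ell_{(2)}}}-)$ (first $Q_{\ell_{(2)}}\otimes_{H_{\ell_{(2)}}}-$ with $t^{(2)}$, then $\text{Wh}_1$ with $s^{(1)}$), the structure isomorphism
\begin{multline*}
s^{(1)}_{X,\,Q_{\ell_{(2)}}\otimes_{H_{\ell_{(2)}}}N}\circ\text{Wh}_1\bigl(t^{(2)}_{X,N}\bigr)\\
=\text{Wh}_1\bigl((\id_X\otimes\varepsilon^{(1)^{-1}}_{Q_{\ell_{(2)}}\otimes_{H_{\ell_{(2)}}}N})\circ\varepsilon^{(2)}_{X\otimes(Q_{\ell_{(2)}}\otimes_{H_{\ell_{(2)}}}N)}\bigr)=u_{X,N},
\end{multline*}
and for $\text{Wh}_2\circ G_1$ (first $G_1$ with $t^{(1)}$, then $\text{Wh}_2$ with $s^{(2)}$),
\begin{multline*}
s^{(2)}_{X,\,Q_{\ell_{(1)}}\otimes_{H_{\ell_{(1)}}}M}\circ\text{Wh}_2\bigl(t^{(1)}_{X,M}\bigr)\\
=\text{Wh}_2\bigl((\id_X\otimes\varepsilon^{(2)^{-1}}_{Q_{\ell_{(1)}}\otimes_{H_{\ell_{(1)}}}M})\circ\varepsilon^{(1)}_{X\otimes(Q_{\ell_{(1)}}\otimes_{H_{\ell_{(1)}}}M)}\bigr)=v_{X,M},
\end{multline*}
matching exactly \eqref{u,Q0} and \eqref{v,Wh0}. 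Being composites of $\C_2$-module equivalences, both pairs are $\C_2$-module equivalences, and since $\text{Wh}_0$ and $Q_0\otimes_{H_0}-$ are mutually quasi-inverse as plain functors (Theorem~\ref{Skryabin_0}), they are mutually inverse $\C_2$-module equivalences.

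The one step I expect to require genuine care — the rest being bookkeeping with the explicit formulas for $s^{(i)}$ and $t^{(i)}$ — is the identification in the third paragraph: one must check that the canonical natural isomorphisms $\text{Wh}_2\circ G_1\Rightarrow\text{Wh}_0$ and $\text{Wh}_1\circ(Q_{\ell_{(2)}}\otimes_{H_{\ell_{(2)}}}-)\Rightarrow Q_0\otimes_{H_0}-$ are morphisms of $\C_2$-module functors, so that the structure isomorphisms computed above genuinely descend to $\C_2$-module structures on $\text{Wh}_0$ and $Q_0\otimes_{H_0}-$ themselves, and that the counit $\varepsilon^{(0)}$ of \eqref{epsilon0} is correspondingly a $\C_2$-module natural isomorphism (this is what upgrades "mutually quasi-inverse functors" to "mutually inverse $\C_2$-module equivalences"). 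This reduces, via the naturality of the unit and counit of $\text{Wh}_1|_{\m_2}\dashv G_1$ and the compatibility Diagram~\eqref{functor1}, to a diagram chase entirely analogous to those in the proof of Lemma~\ref{lemmabimod}.
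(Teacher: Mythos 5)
Your proposal is correct and follows essentially the same route as the paper: there, too, one writes $Q_0\otimes_{H_0}$ (resp.\ $\text{Wh}_0$) as the composite of $Q_{\ell_{(2)}}\otimes_{H_{\ell_{(2)}}}$ and $\text{Wh}_1|_{\m_2}$ (resp.\ of $Q_{\ell_{(1)}}\otimes_{H_{\ell_{(1)}}}|_{\m_0}$ and $\text{Wh}_2$), invokes Lemma~\ref{Whittakerinvariants} together with Lemma~\ref{compositionfunctor}, and reads off that the composite structure isomorphisms are exactly $u$ and $v$ of \eqref{u,Q0} and \eqref{v,Wh0}. The only difference is that the paper takes the triangle relating $Q_0\otimes_{H_0}$ to $\text{Wh}_1|_{\m_2}\circ(Q_{\ell_{(2)}}\otimes_{H_{\ell_{(2)}}})$ from Theorem~\ref{Skryabin_0} as commutative on the nose, so the identification-up-to-isomorphism step you flag in your last paragraph is not carried out there.
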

\begin{proof}
We prove the Theorem for the functor $Q_0 \otimes_{H_0}$.\\
Theorem \ref{Skryabin_0} implies that the following diagram is commutative
\begin{center}
\small
\begin{equation*}
\begin{tikzpicture}
  \matrix (m) [matrix of math nodes,rotate=90,transform shape,row sep=6em,column sep=12em,minimum width=2em]
  {
\m_2 & \Nn_2 \\
\,  & \mathcal{M}_0.\\
 };

  \path[-stealth]

(m-1-2) edge node [above] {$\scriptstyle{Q_{\ell_{(2)}}\otimes_{H_{\ell_{(2)}}}}$} (m-1-1)
(m-1-1) edge node [below] {$\scriptstyle{ \text{Wh}_1|_{\m_2} \, \qquad}$}  (m-2-2)
(m-1-2) edge node [right] {$\scriptstyle{Q_0 \otimes_{H_0}}$}  (m-2-2)  
;
\end{tikzpicture}
\end{equation*}
\end{center}
Moreover, by Lemma \ref{Whittakerinvariants} the functors $\text{Wh}_1$ and $Q_{\ell_{(2)}}\otimes_{H_{\ell_{(2)}}}$ paired respectively with the natural isomorphisms $\text{Wh}_1(\text{id}_- \otimes \varepsilon ^{{(1)}^{-1}}_-)$ and $\varepsilon^{(2)}_{- \otimes (Q_{\ell_{(2)}} \otimes_{H_{\ell_{(2)}}} -)}$, are $\C_2$-module functors. \\Finally, since $Q_0 \otimes_{H_0}=Q_{\ell_{(2)}}\otimes_{H_{\ell_{(2)}}}\circ \text{Wh}_1$, we can apply Lemma \ref{compositionfunctor} obtaining that $(Q_0 \otimes_{H_0},u)$ is a $\C_2$-module functor. \\
The proof for the functor $\text{Wh}_0$ is analogous.
\end{proof}

\setcounter{section}{0}
\renewcommand{\thesection}{\Alph{section}}
\renewcommand{\theHsection}{\Alph{section}}
\setcounter{equation}{0}
\section{Appendix}
\label{appendix}
In this section we exibhit in full details the proof of Lemma \ref{left} and of Lemma \ref{lemmabimod}.\\ \\
\emph{Proof of Lemma \ref{left}} We were left to show that the following equation 
\small
\begin{equation}
\label{pentagA}
\begin{split}
&F\bigl((\text{id}_X \otimes_\m GF((\text{id}_Y \otimes_\m \varepsilon^{-1}_{Z \otimes_\m G(N)})\circ m_{Y,Z,G(N)})) \circ (\text{id}_X \otimes_\m \varepsilon^{-1}_{(Y \otimes_\C Z) \otimes_\m G(N)}) \bigr)\circ
\\
&\circ F \bigl( m_{X,Y \otimes_\C Z, G(N)} \circ (a_{X,Y,Z} \otimes_\m \text{id}_{G(N)})\bigr)=\\
&F \hspace{-2pt}\bigl(\hspace{-2pt}(\text{id}_X \otimes_\m \varepsilon^{-1}_{Y \otimes_\m (GF(Z \otimes_\m G(N)))})\hspace{-2pt} \circ\hspace{-2pt}  m_{X,Y,GF(Z \otimes_\m G(N))} \hspace{-2pt}\circ \hspace{-2pt}  (\text{id}_X \otimes_\C \text{id}_Y \otimes_\m \varepsilon^{-1}_{Z \otimes_\m G(N)}) \bigr)\circ \\
&\circ  F( m_{X \otimes_\C Y,Z,G(N)})
\end{split}
\end{equation}
holds for any $X,Y,Z \in \C$ and $N \in \Nn$.
We recall that we needed the following commutative diagrams:
\begin{center}
\begin{equation}
\label{AA}
\begin{tikzpicture}
  \matrix (m) [matrix of math nodes,row sep=3em,column sep=6em,minimum width=2em]
  {
    GF((Y \otimes_\C Z)\otimes_\m G(N)) & GF(Y \otimes_\m (Z \otimes_\m G(N)))\\
(Y \otimes_\C Z)\otimes_\m G(N)&Y \otimes_\m (Z \otimes_\m G(N)) , \\ };
  \path[-stealth]
    (m-1-1) edge node [above] {$\scriptstyle{GF(m_{Y,Z,G(N)})}$} (m-1-2)
    (m-1-1) edge node [left] {$\scriptstyle{\varepsilon_{(Y \otimes_\C Z)\otimes_\m G(N)}}$} (m-2-1)
    (m-2-1) edge node [above] {$\scriptstyle{m_{Y,Z,G(N)}}$} (m-2-2)
(m-1-2) edge node [right] {$\scriptstyle{\varepsilon_{Y \otimes_\m (Z \otimes_\m G(N))}}$} (m-2-2);
\end{tikzpicture}
\end{equation}
\end{center}

\begin{center}
\begin{equation}
\label{CA}
\begin{tikzpicture}[thick,scale=0.6, every node/.style={scale=0.95}]
  \matrix (m) [matrix of math nodes,row sep=3em,column sep=6em,minimum width=2em]
  {
    GF(Y \otimes_\m ( Z\otimes_\m G(N)))& Y\otimes_\m (Z \otimes_\m G(N))\\
GF(Y\otimes_\m GF (Z \otimes_\m G(N)))&Y\otimes_\m GF (Z \otimes_\m G(N)), \\ };
  \path[-stealth]
    (m-1-1) edge node [above] {$\scriptstyle{\varepsilon_{Y \otimes_\m (Z \otimes_\m G(N))}}$} (m-1-2)
    (m-1-1) edge node [left] {$\scriptstyle{GF(\text{id}_Y \otimes_\m \varepsilon^{-1}_{Z \otimes_\m G(N)})}$} (m-2-1)
    (m-2-1) edge node [above] {$\scriptstyle{\varepsilon_{Y \otimes_\m GF(Z \otimes_\m G(N))}}$} (m-2-2)
(m-1-2) edge node [right] {$\scriptstyle{\text{id}_Y \otimes_\m \varepsilon^{-1}_{Z \otimes_\m G(N)}}$} (m-2-2);
\end{tikzpicture}
\end{equation}
\end{center}
and
 \begin{center}
\small
\begin{equation}
\label{BA}
\begin{tikzpicture}
  \matrix (m) [matrix of math nodes,row sep=3em,column sep=8em,minimum width=2em]
  {
    (X\otimes_\C Y) \otimes_\m GF(Z \otimes_\m G(N))& (X\otimes_\C Y) \otimes_\m (Z \otimes_\m G(N))\\
X\otimes_\m( Y \otimes_\m GF(Z \otimes_\m G(N)))&X\otimes_\m( Y \otimes_\m (Z \otimes_\m G(N))), \\ };
  \path[-stealth]
    (m-1-1) edge node [above] {$\scriptstyle{\text{id}_{X \otimes_\C Y} \otimes_\m \varepsilon_{Z \otimes_\m G(N)}}$} (m-1-2)
    (m-1-1) edge node [left] {$\scriptstyle{m_{X,Y,GF(Z \otimes_\m G(N))}}$} (m-2-1)
    (m-2-1) edge node [above] {$\scriptstyle{\text{id}_X \otimes_\m(\text{id}_Y \otimes_\m \varepsilon_{Z \otimes_\m G(N)})}$} (m-2-2)
(m-1-2) edge node [right] {$\scriptstyle{m_{X,Y,Z \otimes_\m G(N)}}$} (m-2-2);
\end{tikzpicture}
\end{equation}
\end{center}
for $X, Y , Z \in \C$ and $N \in \Nn$.\\
Let us consider the left hand-side of equation \eqref{pentagA}.
The commutativity of diagram \eqref{AA} gives:
\small
\begin{equation*}
\begin{split}
&F\bigl((\text{id}_X \otimes_\m GF((\text{id}_Y \otimes_\m \varepsilon^{-1}_{Z \otimes_\m G(N)})\circ m_{Y,Z,G(N)})) \circ (\text{id}_X \otimes_\m \varepsilon^{-1}_{(Y \otimes Z) \otimes_\m G(N)}) \bigr)\circ
\\
&\circ F \bigl(m_{X,Y \otimes_\C Z, G(N)} \circ (a_{X,Y,Z} \otimes_\m \text{id}_{G(N)})\bigr)\\
&=F\bigl((\text{id}_X \otimes_\m GF(\text{id}_Y \otimes_\m \varepsilon^{-1}_{Z \otimes_\m G(N)})\circ \varepsilon_{Y\otimes_\m (Z \otimes_\m G(N)))}^{-1}) \circ (\text{id}_X \otimes_\m   m_{Y,Z,G(N)} ) \bigr) \circ \\
& \circ F \bigl(m_{X,Y \otimes_\C Z, G(N)} \circ (a_{X,Y,Z} \otimes_\m \text{id}_{G(N)})\bigr).
\end{split}
\end{equation*}
Finally, by the commutativity of diagram \eqref{CA}, the above term reads as:
\small
\begin{equation}
\label{lsA}
\begin{split}
&F\bigl((\text{id}_X \otimes_\m (\varepsilon^{-1}_{Y \otimes_\m (GF(Z \otimes_\m G(N)))} \circ (\text{id}_Y \otimes_\m \varepsilon^{-1}_{Z \otimes_\m G(N)})))\circ (\text{id}_X \otimes_\m  m_{Y,Z,G(N)} )\bigr ) \circ \\
&\circ F\bigl(m_{X,Y \otimes_\C Z, G(N)} \circ (a_{X,Y,Z} \otimes_\m \text{id}_{G(N)})\bigr).
\end{split}
\end{equation}
Consider now the right hand-side of equation \eqref{pentagA}.
By the commutativity of diagram \eqref{BA}, the right hand-side becomes:
\small
\begin{equation}
\label{rsA}
\begin{split}
&F\bigl((\text{id}_X \otimes_\m (\varepsilon^{-1}_{Y \otimes_\m GF((Z \otimes_\m G(N)))} \circ (\text{id}_Y \otimes_\m \varepsilon^{-1}_{Z \otimes_\m G(N)})))\circ m_{X,Y,Z \otimes_\m G(N)}) \bigr) \circ \\
&\circ F( m_{X \otimes_\C Y,Z,G(N)}).
\end{split}
\end{equation}
Since $F\bigl((\text{id}_X \otimes_\m (\varepsilon^{-1}_{Y \otimes_\m (GF(Z \otimes_\m G(N)))} \circ (\text{id}_Y \otimes_\m \varepsilon^{-1}_{Z \otimes_\m G(N)})))$ is an isomorphism, expression \eqref{lsA} and expression \eqref{rsA} are equal if and only if
\begin{equation*}
\small
\begin{split}
&F((\text{id}_X \otimes_\m m_{Y,Z,G(N)}) \circ m_{X,Y\otimes_\C Z, G(N)} \circ (a_{X,Y,Z} \otimes_\m \text{id}_{G(N)}))=\\&F(m_{X,Y,Z \otimes_\m G(N)} \circ m_{X \otimes_\C Y, Z, G(N)}).
\end{split}
\end{equation*}
This equation holds, since $\m$ is a left $\C$-module category. \\ \qed \\
\emph{Proof of Lemma \ref{lemmabimod}.} We had to verify that the following equation
\small
\begin{equation}
\label{bimodA}
\begin{split}
&F(\text{id}_X \otimes_\m GF((\text{id}_Y \otimes_\m \varepsilon^{-1}_{G(N) \otimes^\m Z}) \circ b_{Y,G(N),Z} \circ(\varepsilon_{Y \otimes_\m G(N)} \otimes^\m \text{id}_Z))) \circ \\
&F((\text{id}_X \otimes_\m \varepsilon^{-1}_{GF(Y \otimes_\m G(N))\otimes^\m \text{id}_Z}) \hspace{-2pt}\circ \hspace{-2pt}b_{X,GF(Y \otimes_\m G(N)),Z}\hspace{-2pt} \circ \hspace{-2pt} (\varepsilon_{X \otimes_\m GF(Y \otimes_\m G(N))} \otimes^\m \text{id}_Z)) \circ \\
&F(GF((\text{id}_X \otimes_\m \varepsilon^{-1}_{Y \otimes_\m G(N)}) \circ m_{X,Y,G(N)}) \otimes^\m \text{id}_Z)=\\
&F((\text{id}_X \otimes_\m \varepsilon^{-1}_{Y \otimes_\m GF(G(N) \otimes^\m Z)}) \circ m_{X,Y,GF(G(N) \otimes^\m Z)} \circ (id_{X \otimes_\C Y} \otimes_\m \varepsilon^{-1}_{G(N) \otimes^\m Z})) \circ \\
&F(b_{X \otimes_\C Y,G(N),Z} \circ (\varepsilon_{(X \otimes_\C Y) \otimes_\m G(N)} \otimes^\m \text{id}_Z)).
\end{split}
\end{equation}
holds for any $X, Y \in \C$, $Z \in \mathcal{D}$ and $N \in \Nn$.\\
We recall the commutative diagrams we needed for the proof. Specifically, for $X, Y, Z \in \C$ and $N \in \Nn$ we had
\begin{center}
\begin{equation}
\label{1A}
\begin{tikzpicture}
  \matrix (m) [matrix of math nodes,row sep=3em,column sep=6em,minimum width=2em]
  {
    GF(X \otimes_\m (Y \otimes_\m G(N))) & X \otimes_\m (Y \otimes_\m G(N))\\
GF((X \otimes_\C Y) \otimes_\m G(N)))&(X \otimes_\C Y) \otimes_\m G(N)), \\ };
  \path[-stealth]
    (m-1-1) edge node [above] {$\scriptstyle{\varepsilon_{X \otimes_\m (Y \otimes_\m G(N))}}$} (m-1-2)
    (m-2-1) edge node [left] {$\scriptstyle{GF(m_{X,Y,G(N)})}$} (m-1-1)
    (m-2-1) edge node [above] {$\scriptstyle{\varepsilon_{(X \otimes_\C Y) \otimes_\m G(N)}}$} (m-2-2)
(m-2-2) edge node [right] {$\scriptstyle{m_{X,Y,G(N)}}$} (m-1-2);
\end{tikzpicture}
\end{equation}
\end{center}

\begin{center}
\begin{equation}
\label{2A}
\begin{tikzpicture}
  \matrix (m) [matrix of math nodes,row sep=3em,column sep=6em,minimum width=2em]
  {
    GF(X \otimes_\m GF(Y \otimes_\m G(N))) & X \otimes_\m GF(Y \otimes_\m G(N))\\
  GF(X \otimes_\m (Y \otimes_\m G(N)))&X \otimes_\m (Y \otimes_\m G(N)), \\ };
  \path[-stealth]
    (m-1-1) edge node [above] {$\scriptstyle{\varepsilon_{X \otimes_\m GF(Y \otimes_\m G(N))}}$} (m-1-2)
    (m-1-1) edge node [left] {$\scriptstyle{GF(\text{id}_X \otimes_\m \varepsilon_{Y \otimes_\m G(N)})}$} (m-2-1)
    (m-2-1) edge node [above] {$\scriptstyle{\varepsilon_{X \otimes_\m (Y \otimes_\m G(N))}}$} (m-2-2)
(m-1-2) edge node [right] {$\scriptstyle{\text{id}_X \otimes_\m \varepsilon_{Y \otimes_\m G(N)}}$} (m-2-2);
\end{tikzpicture}
\end{equation}
\end{center}

\begin{center}
\begin{equation}
\label{3A}
\begin{tikzpicture}
  \matrix (m) [matrix of math nodes,row sep=3em,column sep=6em,minimum width=2em]
  {
    GF((Y \otimes_\m G(N)) \otimes^\m Z) & (Y \otimes_\m G(N)) \otimes^\m Z\\
  GF(Y \otimes_\m (G(N) \otimes^\m Z))&Y \otimes_\m (G(N) \otimes^\m Z), \\ };
  \path[-stealth]
    (m-1-1) edge node [above] {$\scriptstyle{\varepsilon_{(Y \otimes_\m G(N)) \otimes^\m Z}}$} (m-1-2)
    (m-1-1) edge node [left] {$\scriptstyle{GF(b_{Y,G(N),Z})}$} (m-2-1)
    (m-2-1) edge node [above] {$\scriptstyle{\varepsilon_{Y \otimes_\m (G(N) \otimes^\m Z)}}$} (m-2-2)
(m-1-2) edge node [right] {$\scriptstyle{b_{Y,G(N),Z}}$} (m-2-2);
\end{tikzpicture}
\end{equation}
\end{center}
\begin{center}
\begin{equation}
\label{4A}
\begin{tikzpicture}[thick,scale=0.6, every node/.style={scale=0.9}]
  \matrix (m) [matrix of math nodes,row sep=3em,column sep=6em,minimum width=2em]
  {
    GF(GF(Y \otimes_\m G(N)) \otimes^\m Z) &GF(Y \otimes_\m G(N)) \otimes^\m Z\\
  GF((Y \otimes_\m G(N)) \otimes^\m Z)&(Y \otimes_\m G(N)) \otimes^\m Z,\\ };
  \path[-stealth]
    (m-1-1) edge node [above] {$\scriptstyle{\varepsilon_{GF(Y \otimes_\m G(N)) \otimes^\m Z}}$} (m-1-2)
    (m-1-1) edge node [left] {$\scriptstyle{GF(\varepsilon_{Y \otimes_\m G(N)} \otimes^\m \text{id}_Z)}$} (m-2-1)
    (m-2-1) edge node [above] {$\scriptstyle{\varepsilon_{(Y \otimes_\m G(N)) \otimes^\m Z}}$} (m-2-2)
(m-1-2) edge node [right] {$\scriptstyle{\varepsilon_{Y \otimes_\m G(N)} \otimes^\m \text{id}_Z}$} (m-2-2);
\end{tikzpicture}
\end{equation}
\end{center}

\begin{center}
\begin{equation}
\label{5A}
\begin{tikzpicture}[thick,scale=0.6, every node/.style={scale=0.9}]
  \matrix (m) [matrix of math nodes,row sep=3em,column sep=6em,minimum width=2em]
  {
    GF(Y \otimes_\m (G(N) \otimes^\m Z)) &Y \otimes_\m (G(N) \otimes^\m Z)\\
  GF(Y \otimes_\m GF(G(N) \otimes^\m Z))&Y \otimes_\m GF(G(N) \otimes^\m Z),\\ };
  \path[-stealth]
    (m-1-1) edge node [above] {$\scriptstyle{\varepsilon_{Y \otimes_\m (G(N) \otimes^\m Z)}}$} (m-1-2)
    (m-1-1) edge node [left] {$\scriptstyle{GF(\text{id}_Y \otimes_\m \varepsilon^{-1}_{G(N) \otimes^\m Z})}$} (m-2-1)
    (m-2-1) edge node [above] {$\scriptstyle{\varepsilon_{Y \otimes_\m GF(G(N) \otimes^\m Z)}}$} (m-2-2)
(m-1-2) edge node [right] {$\scriptstyle{\text{id}_Y \otimes_\m \varepsilon^{-1}_{G(N) \otimes^\m Z}}$} (m-2-2);
\end{tikzpicture}
\end{equation}
\end{center}

\begin{center}
\begin{equation}
\label{6A}
\begin{tikzpicture}[thick,scale=0.6, every node/.style={scale=0.9}]
  \matrix (m) [matrix of math nodes,row sep=3em,column sep=6em,minimum width=2em]
  {
    (X \otimes_\C Y) \otimes_\m GF(G(N) \otimes^\m Z) & X \otimes_\m (Y \otimes_\m GF(G(N) \otimes^\m Z)) \\
  (X \otimes_\C Y) \otimes_\m (G(N) \otimes^\m Z)& X \otimes_\m (Y \otimes_\m (G(N) \otimes^\m Z))\\ };
  \path[-stealth]
    (m-1-1) edge node [above] {$\scriptstyle{m_{X,Y, GF(G(N) \otimes^\m Z)}}$} (m-1-2)
    (m-1-1) edge node [left] {$\scriptstyle{\text{id}_{X \otimes_\C Y} \otimes_\m \varepsilon_{G(N) \otimes^\m Z}}$} (m-2-1)
    (m-2-1) edge node [above] {$\scriptstyle{m_{X,Y,G(N) \otimes^\m Z}}$} (m-2-2)
(m-1-2) edge node [right] {$\scriptstyle{\text{id}_ X \otimes_\m (\text{id}_Y \otimes_\m \varepsilon_{G(N) \otimes^\m Z})}$} (m-2-2);
\end{tikzpicture}
\end{equation}
\end{center}
and 
\begin{center}
\begin{equation}
\label{7A}
\begin{tikzpicture}[thick,scale=0.6, every node/.style={scale=0.85}]
  \matrix (m) [matrix of math nodes,row sep=3em,column sep=6em,minimum width=2em]
  {
    (X \otimes_\m GF(Y \otimes_\m G(N))) \otimes^\m Z & X \otimes_\m (GF(Y \otimes_\m G(N)) \otimes^\m Z) \\
  (X \otimes_\m (Y \otimes_\m G(N))) \otimes^\m Z& X \otimes_\m ((Y \otimes_\m G(N)) \otimes^\m Z).\\ };
  \path[-stealth]
    (m-1-1) edge node [above] {$\scriptstyle{b_{X, GF(Y \otimes_\m G(N)),Z}}$} (m-1-2)
    (m-1-1) edge node [left] {$\scriptstyle{(\text{id}_{X} \otimes_\m \varepsilon_{Y \otimes_\m G(N)}) \otimes^\m \text{id}_Z}$} (m-2-1)
    (m-2-1) edge node [above] {$\scriptstyle{b_{X, Y \otimes_\m G(N),Z}}$} (m-2-2)
(m-1-2) edge node [right] {$\scriptstyle{\text{id}_{X} \otimes_\m (\varepsilon_{Y \otimes_\m G(N)} \otimes^\m \text{id}_Z)}$} (m-2-2);
\end{tikzpicture}
\end{equation}
\end{center}

Consider the left-hand side of equation \eqref{bimodA}. By commutativity of diagram \eqref{1A}, we get:
\begin{equation*}
\begin{split}
&F(\text{id}_X \otimes_\m GF((\text{id}_Y \otimes_\m \varepsilon^{-1}_{G(N) \otimes^\m Z}) \circ b_{Y,G(N),Z} \circ(\varepsilon_{Y \otimes_\m G(N)} \otimes^\m \text{id}_Z))) \circ \\
&F((\text{id}_X \otimes_\m \varepsilon^{-1}_{GF(Y \otimes_\m G(N))\otimes^\m \text{id}_Z}) \circ b_{X,GF(Y \otimes_\m G(N)),Z} \circ (\varepsilon_{X \otimes_\m GF(Y \otimes_\m G(N))} \otimes^\m \text{id}_Z)) \circ \\
&F(GF((\text{id}_X \otimes_\m \varepsilon^{-1}_{Y \otimes_\m G(N)}) \circ m_{X,Y,G(N)}) \otimes^\m \text{id}_Z)=\\
&F(\text{id}_X \otimes_\m GF((\text{id}_Y \otimes_\m \varepsilon^{-1}_{G(N) \otimes^\m Z}) \circ b_{Y,G(N),Z} \circ(\varepsilon_{Y \otimes_\m G(N)} \otimes^\m \text{id}_Z))) \circ \\
&F((\text{id}_X \otimes_\m \varepsilon^{-1}_{GF(Y \otimes_\m G(N))\otimes^\m \text{id}_Z}) \circ b_{X,GF(Y \otimes_\m G(N)),Z} \circ (\varepsilon_{X \otimes_\m GF(Y \otimes_\m G(N))} \otimes^\m \text{id}_Z)) \circ \\
&F((GF(\text{id}_X \otimes_\m \varepsilon^{-1}_{Y \otimes_\m G(N)}) \otimes^\m \text{id}_Z) \circ ((\varepsilon^{-1}_{X \otimes_\m (Y \otimes_\m G(N))} \circ m_{X,Y,G(N)}) \otimes^\m \text{id}_Z) \circ\\
&F(\varepsilon_{(X \otimes_\C Y) \otimes_\m G(N)} \otimes^\m \text{id}_Z)).
\end{split}
\end{equation*}
The commutativity of diagram \eqref{2A} gives:
\begin{equation*}
\begin{split}
&F(\text{id}_X \otimes_\m GF((\text{id}_Y \otimes_\m \varepsilon^{-1}_{G(N) \otimes^\m Z}) \circ b_{Y,G(N),Z} \circ(\varepsilon_{Y \otimes_\m G(N)} \otimes^\m \text{id}_Z))) \circ \\
&F((\text{id}_X \otimes_\m \varepsilon^{-1}_{GF(Y \otimes_\m G(N))\otimes^\m \text{id}_Z}) \circ b_{X,GF(Y \otimes_\m G(N)),Z} \circ (\varepsilon_{X \otimes_\m GF(Y \otimes_\m G(N))} \otimes^\m \text{id}_Z)) \circ \\
&F((\varepsilon^{-1}_{X \otimes_\m GF(Y \otimes_\m G(N))} \circ(\text{id}_X \otimes_\m \varepsilon_{Y \otimes_\m G(N)}^{-1}) \circ m_{X,Y,G(N)} \circ \varepsilon_{(X \otimes_\C Y) \otimes_\m G(N)})\otimes^\m \text{id}_Z)=\\
&F(\text{id}_X \otimes_\m GF((\text{id}_Y \otimes_\m \varepsilon^{-1}_{G(N) \otimes^\m Z}) \circ b_{Y,G(N),Z} \circ(\varepsilon_{Y \otimes_\m G(N)} \otimes^\m \text{id}_Z))) \circ \\
&F((\text{id}_X \otimes_\m \varepsilon^{-1}_{GF(Y \otimes_\m G(N))\otimes^\m \text{id}_Z}) \circ b_{X,GF(Y \otimes_\m G(N)),Z}) \circ \\
&F(((\text{id}_X \otimes_\m \varepsilon_{Y \otimes_\m G(N)}^{-1}) \circ m_{X,Y,G(N)} \circ \varepsilon_{(X \otimes_\C Y) \otimes_\m G(N)})\otimes^\m \text{id}_Z).
\end{split}
\end{equation*}
By commutativity of \eqref{7A}, the previous term becomes:
\small
\begin{equation*}
\begin{split}
&F(\text{id}_X \otimes_\m GF((\text{id}_Y \otimes_\m \varepsilon^{-1}_{G(N) \otimes^\m Z}) \circ b_{Y,G(N),Z} \circ(\varepsilon_{Y \otimes_\m G(N)} \otimes^\m \text{id}_Z))) \circ \\
&F((\text{id}_X \otimes_\m \varepsilon^{-1}_{GF(Y \otimes_\m G(N))\otimes^\m \text{id}_Z}) \circ (\text{id}_X \otimes_\m (\varepsilon^{-1}_{Y \otimes_\m G(N)} \otimes^\m \text{id}_Z)) \circ  b_{X,Y \otimes_\m G(N),Z}) \circ \\
&F(( m_{X,Y,G(N)} \circ \varepsilon_{(X \otimes_\C Y) \otimes_\m G(N)})\otimes^\m \text{id}_Z).
\end{split}
\end{equation*}
By virtue of commutativity of diagram \eqref{3A},  the above reads as:
\small
\begin{equation*}
\begin{split}
&F(\text{id}_X \otimes_\m (GF(\text{id}_Y \otimes_\m \varepsilon^{-1}_{G(N) \otimes^\m Z}) \circ (\varepsilon^{-1}_{Y \otimes_\m (G(N) \otimes^\m Z)} \circ b_{Y,G(N),Z} \circ \varepsilon_{(Y \otimes_\m G(N))\otimes^\m Z}))) \circ\\
&F(GF(\varepsilon_{Y \otimes_\m G(N)} \otimes^\m \text{id}_Z)\circ (\text{id}_X \otimes_\m \varepsilon^{-1}_{GF(Y \otimes_\m G(N))\otimes^\m \text{id}_Z})) \circ\\
&F((\text{id}_X \otimes_\m (\varepsilon^{-1}_{Y \otimes_\m G(N)} \otimes^\m \text{id}_Z)) \circ  b_{X,Y \otimes_\m G(N),Z}) \circ \\
&F(( m_{X,Y,G(N)} \circ \varepsilon_{(X \otimes_\C Y) \otimes_\m G(N)})\otimes^\m \text{id}_Z).
\end{split}
\end{equation*}
By commutativity of diagram \eqref{4A} the previous term becomes:
\small
\begin{equation*}
\begin{split}
&F(\text{id}_X \otimes_\m (GF(\text{id}_Y \otimes_\m \varepsilon^{-1}_{G(N) \otimes^\m Z}) \circ (\varepsilon^{-1}_{Y \otimes_\m (G(N) \otimes^\m Z)} \circ b_{Y,G(N),Z} )))\circ \\
&F( b_{X,Y \otimes_\m G(N),Z}\circ (( m_{X,Y,G(N)} \circ \varepsilon_{(X \otimes_\C Y) \otimes_\m G(N)})\otimes^\m \text{id}_Z)) .
\end{split}
\end{equation*}
Finally, by commutativity of diagram \eqref{5A} the above equals
\small
\begin{equation}
\label{sxbim}
\begin{split}
&F(\text{id}_X \otimes_\m (\varepsilon^{-1}_{Y \otimes_\m GF(G(N) \otimes^\m Z)} \circ (\text{id}_Y \otimes_\m \varepsilon^{-1}_{G(N) \otimes^\m Z}) \circ b_{Y,G(N),Z} ))\circ \\
&F( b_{X,Y \otimes_\m G(N),Z}\circ (( m_{X,Y,G(N)} \circ \varepsilon_{(X \otimes_\C Y) \otimes_\m G(N)})\otimes^\m \text{id}_Z)) .
\end{split}
\end{equation}
Consider now the right hand-side of equation \eqref{bimodA}. By commutativity of diagram \eqref{6A}, it equals
\small
\begin{equation}
\label{dxbim}
\begin{split}
&F((\text{id}_X \otimes_\m \varepsilon^{-1}_{Y \otimes_\m GF(G(N) \otimes^\m Z)}) \circ (\text{id}_X \otimes_\m (\text{id}_Y \otimes_\m \varepsilon^{-1}_{G(N) \otimes^\m Z}))\circ m_{X,Y,G(N) \otimes^\m Z}) \circ \\
&F( b_{X \otimes_\C Y,G(N),Z} \circ (\varepsilon_{(X \otimes_\C Y) \otimes_\m G(N)} \otimes^\m \text{id}_Z)).
\end{split}
\end{equation}
Since $F((\text{id}_X \otimes_\m \varepsilon^{-1}_{Y \otimes_\m GF(G(N) \otimes^\m Z)}) \circ (\text{id}_X \otimes_\m (\text{id}_Y \otimes_\m \varepsilon^{-1}_{G(N) \otimes^\m Z})))$ and \vspace{0.2cm}\\$(\varepsilon_{(X \otimes_\C Y) \otimes_\m G(N)} \otimes^\m \text{id}_Z)$ are natural isomorphisms, then equation \eqref{bimodA} is satisfied if and only if :
\begin{equation*}
\begin{split}
&F((\text{id}_X \otimes_\m  b_{Y,G(N),Z} )\circ  b_{X,Y \otimes_\m G(N),Z} \circ ( m_{X,Y,G(N)} \otimes^\m \text{id}_Z))=\\
&F( m_{X,Y,G(N) \otimes^\m Z} \circ  b_{X \otimes_\C Y,G(N),Z}).
\end{split}
\end{equation*}
This equation holds, since $\m$ is a $(\C, \D)$-bimodule category, concluding the proof. \\ \qed

\end{document}